\newtheorem{theorem}{Theorem}
\newtheorem{lemma}[theorem]{Lemma}
\newtheorem{proposition}[theorem]{Proposition}
\newtheorem{corollary}[theorem]{Corollary}
\theoremstyle{definition}
\newtheorem{definition}[theorem]{Definition}
\theoremstyle{remark}
\newtheorem{remark}[theorem]{Remark}
\numberwithin{theorem}{section}
\numberwithin{equation}{section}
\newcommand{\N}{\mathds{N}}
\newcommand{\R}{\mathds{R}}
\renewcommand{\L}{\mathcal{L}}
\renewcommand{\b}{\mathfrak{b}}
\renewcommand{\d}{\mathrm{d}}
\newcommand{\dx}{\mathrm{d}x}
\newcommand{\dt}{\mathrm{d}t}
\newcommand{\ds}{\mathrm{d}s}
\newcommand{\eps}{\varepsilon}
\renewcommand{\epsilon}{\varepsilon}
\renewcommand{\rho}{\varrho}
\DeclareMathOperator{\spt}{spt}
\DeclareMathOperator{\diam}{diam}
\DeclareMathOperator{\dist}{dist}
\DeclareMathOperator*{\esssup}{ess\,sup}
\DeclareMathOperator{\Div}{div}
\newcommand{\ca}{\operatorname{cap}}
\newcommand{\wto}{\rightharpoondown}
\newcommand{\wsto}{\overset{\raisebox{-1ex}{\scriptsize $*$}}{\rightharpoondown}}
\def\Xint#1{\mathchoice
    {\XXint\displaystyle\textstyle{#1}}%
    {\XXint\textstyle\scriptstyle{#1}}%
    {\XXint\scriptstyle\scriptscriptstyle{#1}}%
    {\XXint\scriptscriptstyle\scriptscriptstyle{#1}}%
    \!\int}
\def\XXint#1#2#3{\setbox0=\hbox{$#1{#2#3}{\int}$}
    \vcenter{\hbox{$#2#3$}}\kern-0.5\wd0}
\def\bint{\Xint-}
\def\dashint{\Xint{\raise4pt\hbox to7pt{\hrulefill}}}
\def\XXiint#1#2#3{\setbox0=\hbox{$#1{#2#3}{\iint}$}
    \vcenter{\hbox{$#2#3$}}\kern-0.5\wd0}
\newcommand{\power}[2]{\boldsymbol{#1^{\mbox{\unboldmath{\scriptsize$#2$}}}}}
\subjclass[2020]{35A01, 35A15, 35K51, 35K55, 49J40}
\keywords{noncylindrical domains, doubly nonlinear systems, porous medium equation, existence, variational solutions}
\begin{document}
\title[Doubly nonlinear systems in general noncylindrical domains]{Existence of variational solutions to doubly nonlinear systems in general noncylindrical domains}
\date{\today}

\author[L.~Schätzler]{Leah Schätzler}
\address{Leah Sch\"atzler\\
Department of Mathematics and Systems Analysis, Aalto University\\
P.O.~Box 11100, FI-00076 Aalto, Finland}
\email{ext-leah.schatzler@aalto.fi}

\author[C.~Scheven]{Christoph Scheven}
\address{Christoph Scheven\\
Fakultät für Mathematik, Universität Duisburg-Essen\\
Thea-Leymann-Str.~9, 45127 Essen, Germany}
\email{christoph.scheven@uni-due.de}

\author[J.~Siltakoski]{Jarkko Siltakoski}
\address{Jarkko Siltakoski\\
Department of Mathematics and Statistics, University of Helsinki, P.O.
Box 68 (Gustaf Hallstr\"omin katu 2b), Finland}
\email{jarkko.siltakoski@helsinki.fi}

\author[C.~Stanko]{Calvin Stanko}
\address{Calvin Stanko\\
Fachbereich Mathematik, Paris-Lodron Universit\"at Salzburg\\
Hellbrunner Str.~34, 5020 Salzburg, Austria}
\email{calvin.stanko@plus.ac.at}

\begin{abstract}
We consider the Cauchy--Dirichlet problem to doubly nonlinear systems of the form
\begin{align*}
    \partial_t \big( |u|^{q-1}u \big) - \operatorname{div} \big( D_\xi f(x,u,Du) \big) = - D_u f(x,u,Du)
\end{align*}
with $q \in (0, \infty)$ in a bounded noncylindrical domain $E \subset \mathds{R}^{n+1}$.
Further, we suppose that $x \mapsto f(x,u,\xi)$ is integrable, that $(u,\xi) \mapsto f(x,u,\xi)$ is convex, and that $f$ satisfies a $p$-growth and -coercivity condition for some $p>\max \left\{ 1,\frac{n(q+1)}{n+q+1} \right\}$.
Merely assuming that $\L^{n+1}(\partial E) = 0$, we prove the existence of variational solutions $u \in L^\infty \big( 0,T;L^{q+1}(E,\mathds{R}^N) \big)$.
If $E$ does not shrink too fast, we show that for the solution $u$ constructed in the first step, $\vert u \vert^{q-1}u$ admits a distributional time derivative.
Moreover, under suitable conditions on $E$ and the stricter lower bound $p \geq \frac{(n+1)(q+1)}{n+q+1}$, $u$ is continuous with respect to time. 
\end{abstract}

%***************************************************************************

\maketitle

\section{Introduction}
In the present paper we are concerned with the Cauchy--Dirichlet problem to parabolic systems in bounded noncylindrical domains, i.e.~subsets $E$ of space-time $\R^n \times [0,\infty)$ that do not necessarily have the form $\Omega \times [0,T]$ with a fixed domain $\Omega \subset \R^n$.
Instead, the spatial domain is allowed to evolve over time.
For a precise description of our setting, we refer to the beginning of Section~\ref{Sec:Setting_and_main_results}.
The prototype of the considered doubly nonlinear systems is given by 
\begin{align}\label{prototype equation}
    \partial_t(\vert u \vert^{q-1}u) - \Div(\vert Du \vert^{p-2}Du) = 0
\end{align}
with exponents $q\in(0, \infty)$ and $p \in (1,\infty)$.
Well-known special cases include the heat equation ($q=1$ and $p=2$), the parabolic $p$-Laplace equation ($q=1$ and arbitrary $p \in (1, \infty)$), and the porous medium equation ($p=2$ and arbitrary $q \in (0, \infty)$).
Although at first glance systems of type \eqref{prototype equation} look similar in all parameter regimes, different combinations of exponents $p$ and $q$ describe different physics.
Indeed, in the slow diffusion regime $p>q+1$, solutions can have compact support and perturbations propagate with finite speed, whereas the opposite is true in the fast diffusion case $p \leq q+1$, cf.~\cite{Ivanov}.
Applications related to \eqref{prototype equation} include the flow of a non-Newtonian fluid through a porous medium, cf.~\cite{Kalashnikov, Vazquez-1,BDGLS_23}, dynamics of shallow water flows \cite{Alonso-etal} and glaciers \cite{CDDSV_03,Mahaffy}, and friction-dominated flow in a gas network \cite{Bamberger-etal, Leugering-Mophou}.
Moreover, various types of evolution equations in noncylindrical domains have been used to model phenomena appearing in soil science~\cite{Showalter_Walkington}, filtration processes~\cite{OKZ_58} and fluid dynamics, cf.~\cite{Aronson, Barenblatt1, Barenblatt2, Ladyzenskaja}, in particular the Hele-shaw flow \cite{Shelley} and the drop-splash problem \cite{Krechetnikov}, 
as well as the motion of incompressible fluids \cite{FZ_2010} and the interplay between internal solitary waves \cite{BBFSV08}. Well known examples are the Stefan problem \cite{Stefan}, and the Skorokhod problem \cite{Skorokhod}. Further problems formulated in time-dependent domains also arise in quantum mechanics \cite{BBFSV13, BL82, DCDL_11}, as well as in mathematical biology, where reaction-diffusion equations are utilized to model pattern formation, cf.~\cite{Crampin_Gaffney_Maini, Murray}. For contributions on stochastic parabolic equations in noncylindrical domains, see~\cite{LS_84, Burdzy-Chen-Sylvester-1, Burdzy-Chen-Sylvester-2}.
For an extensive overview, we refer to \cite{Knobloch_Krechetnikov}.

At this stage, a brief overview of previous existence results is in order.
First, in the case of cylindrical domains, the existence of variational or weak solutions to doubly nonlinear equations was shown in \cite{Alt_Luckhaus, Bernis, Grange_Mignot} for bounded and unbounded domains.
In particular, Alt and Luckhaus \cite{Alt_Luckhaus} considered quasilinear systems
\begin{align*}
    \partial_t b(u) - \Div( a(b(u), Du)) = f(b(u)),
\end{align*}
where $b$ is the gradient of a convex $C^{1}$-function satisfying $b(0)=0$, and $a(b(\zeta), \xi)$ is continuous with respect to $(\zeta, \xi)$, and for $1<p<\infty$ it satisfies the ellipticity condition
\begin{align*}
    (a(b(\zeta), \xi) -  a(b(\zeta), \eta))\cdot(\xi - \eta) \ge c \vert \xi - \eta \vert^{p}
\end{align*}
with a constant $c>0$, as well as a standard $(p-1)$-growth condition.
The proof relies on the Galerkin method.
Further, Ivanov, Mkrtychyan, and Jäger \cite{Ivanov,Ivanov_Mkrtychyan, Ivanov_Mkrtychyan_Jaeger} proved existence for doubly nonlinear equations via regularization of the considered equations.
Subsequently, Akagi and Stefanelli \cite{Akagi_Stefanelli} used the Weighted Energy Dissipation Functional approach, also known as elliptic regularization, to deal with the Cauchy--Dirichlet problem to equations of the form
\begin{align*}
    \partial_t b(u) - \Div(a(Du)) \ni f,
\end{align*}
where $b \subset \mathds{R} \times \mathds{R}$ and $a \subset \mathds{R}^{n} \times \mathds{R}^{n}$ are maximal monotone graphs fulfilling polynomial growth conditions.
Moreover, B\"ogelein, Duzaar, Marcellini and Scheven \cite{BDMS18-2} showed the existence of non-negative variational solutions to the Cauchy--Dirichlet problem with time-independent boundary values to general doubly nonlinear equations of the form
\begin{align*}
    \partial_t b(u) - \Div( D_\xi f(x,u,Du) ) = - D_u f(x,u,Du),
\end{align*}
where $b$ is an increasing, piecewise $C^{1}$ function satisfying a certain growth condition. Furthermore, $f$ is convex and $p$-coercive, but does not need to satisfy a growth assumption from above.
The proof relies on a nonlinear variant of the method of minimizing movements.
Assuming that $f$ additionally fulfills a standard $p$-growth condition, time-dependent boundary values and obstacle functions were treated in \cite{Schaetzler,Schaetzler-obstacle-deg,Schaetzler-obstacle-sing}.

Now, we continue with noncylindrical domains.
We point out that in general the case of domains that are allowed to shrink in time is more difficult to handle than the case of nondecreasing domains.
The reason is that the boundary value problem can become overdetermined if the domain decreases too fast.
Nevertheless, the literature regarding the existence theory for linear equations in noncylindrical domains is extensive, cf.~\cite{Acquistapace-Terreni,Bonaccorsi_Guatteri,Brown-Hu-Lieberman,Cannarsa-DaPrato-Zolesio,Gianazza_Savare,Lions,Lumer-Schnaubelt,Savare}.
Moreover, several nonlinear PDEs have recently been considered in time-dependent domains with a smooth boundary, cf.~\cite{Alphonse-et-al, Alphonse_Elliott, Lan_Son_Tang_Thuy, Nakao-1, Nakao-2}.

Much less is known with respect to nonlinear parabolic equations in nonsmooth time-varying domains.
In particular, for the porous medium equation and a related reaction-diffusion system, only non-negative solutions and continuous boundary values have been considered by Abdulla \cite{Abdulla-PME,Abdulla-reaction-diffusion}.
Since the proof relies on approximation by strictly positive data, the idea cannot be extended to the case of sign-changing solutions.

In contrast, the picture for systems of parabolic $p$-Laplace type is more complete.
The first result in this direction is due to Paronetto \cite{Paronetto}, who considered the Cauchy--Dirichlet problem to
\begin{align*}
    \partial_t u - \mathcal{A} u = f
\end{align*}
with a monotone operator $\mathcal{A}$.
His assumptions cover the case that $\mathcal{A}$ is the $p$-Laplace operator in the range $p \geq 2$.
Concerning the domain, he assumes that the time slices have Lipschitz boundary and that they are regular deformations of their neighbors.
Moreover, Calvo, Novaga and Orlandi \cite{Calvo_Novaga_Orlandi} established the existence of weak solutions for parabolic $p$-Laplace type equations in domains that are Lipschitz-regular in space-time, and proved uniqueness under stronger assumptions on the domain.

Finally, Bögelein, Duzaar, Scheven and Singer \cite{BDSS18} were concerned with the Cauchy--Dirichlet problem with zero boundary values to systems of the form
\begin{align*}
    \partial_t u - \Div( D_\xi f(x,u,Du) ) = - D_u f(x,u,Du)
\end{align*}
in a bounded noncylindrical domain $E \subset (\Omega \times (0,T)) \subset \R^{n+1}$.
Here, the Carathéo\-dory function $f$ satisfies a $p$-coercivity condition for some $p>1$, but no specific growth condition from above, and the partial map $(u,\xi) \mapsto f(x,u,\xi)$ is convex.
Merely assuming that $\L^{n+1}(\partial E) = 0$, they established the existence of variational solutions in a subspace of $L^\infty(0,T;L^2(\Omega,\R^N))$.
For nondecreasing domains, the proof relies on the method of minimizing movements, whereas general domains are approximated from outside by a union of cylinders of small height, and the existence result from the first step is used in each of these cylinders.
In this context, note that the idea of variational solutions to parabolic PDEs stems from Lichnewsky and Temam \cite{Lichnewsky_Temam}.
The advantage of this approach is that it allows more flexibility compared to a PDE approach, especially in developing an existence theory, cf.~\cite{BDM13, BDS17, BDS16}.
Subsequently, in \cite{BDSS18} the authors show that under stronger assumptions on $E$ and $f$, variational solutions admit a weak time derivative in the dual of the space that encodes the boundary values, and that they are continuous with respect to time.
In particular, they assume that the speed, at which $E$ is allowed to shrink, is bounded.

In the present paper, we continue our research related to doubly-nonlinear systems of type \eqref{prototype equation} in irregular noncylindrical domains $E \subset \R^{n+1}$, that we started in \cite{SSSS} with the case of nondecreasing domains.
Using the same overall strategy as in \cite{BDSS18}, we first establish the existence of variational solutions in domains that merely satisfy $\L^{n+1}(\partial E) = 0$.
Next, under stronger conditions on $E$ we prove that a power of the constructed solution $u$ admits a weak time derivative, and that $u$ is continuous in time.
For the precise results, we refer to Section \ref{sec:main-results}.
Note that our results generalize the results from \cite{BDSS18} even in the case $q=1$ of $p$-Laplace systems. One of the main improvements compared to \cite{BDSS18} is that we are able to weaken the Lipschitz type condition on the shrinking of the domain from \cite{BDSS18} to a Sobolev type condition, see~\eqref{one sided growth condition} and \eqref{definition_of_r_for_one_sided_growth}. In particular, in the case $p>2$, our results cover examples of Petrovski\u\i{} type domains, cf.~\cite{BBG17}, that were not covered in \cite{BDSS18}.  We refer to Remark~\ref{rem:Petrovskii} for a more detailed discussion. Moreover, we generalize the results from \cite{BDSS18} in a second way, since we deal with general time-independent boundary values, compared to zero boundary values as in \cite{BDSS18}.

\section{Setting and main results}\label{Sec:Setting_and_main_results}
\subsection{Notation and assumptions}
Consider an open and bounded set $\Omega \subset \R^n$, $n \geq 2$, and for $0<T<\infty$ define the space-time cylinder $\Omega_T := \Omega \times [0,T)$.
Further, let $E \subset \Omega_T$ be a relatively open noncylindrical domain satisfying
\begin{equation}
	\L^{n+1}(\partial E) = 0.
	\label{eq:boundary_E_zero_measure}
\end{equation}
For fixed $t \in [0,T)$, the time slice of $E$ is given by
\begin{align*}
	E^t := \{x \in \R^n : (x,t) \in E\} \subset \R^n,
\end{align*}
such that
\begin{align*}
	E = \bigcup_{t \in [0,T)} E^t \times \{t\}.
\end{align*}
Further, we denote the lateral boundary of $E$ by 
\begin{align*}
    \partial_\mathrm{lat} E :=\bigcup_{t \in [0,T)} \partial E^t \times \{t\}.
\end{align*}

We are interested in the Cauchy-Dirichlet problem
\begin{equation}
	\left\{
	\begin{array}{cl}
		\partial_t \big(|u|^{q-1}u \big) - \Div \big( D_\xi f(x,u,Du) \big) = - D_u f(x,u,Du)
		& \text{in } E, \\[5pt]
		u=u_{\ast}
		&\text{on } \partial_\mathrm{lat} E, \\[5pt]
		u(\cdot,0) = u_o
		&\text{in } E^0,
	\end{array}
	\right.
	\label{eq:system}
\end{equation}
where $q \in (0,\infty)$, and for $N \geq 1$ the initial values $u_o \colon \Omega \to \R^N$ and the time-independent boundary datum $u_{\ast} \colon \Omega \to \R^N$ fulfill
\begin{equation}
	\left\{
	\begin{array}{l}
		\mbox{$u_o \in L^{q+1}(\Omega, \mathds{R}^{N}$),} \\[5pt]
		\mbox{$u_{\ast} \in L^{q+1}(\Omega, \mathds{R}^{N}) \cap W^{1,p}(\Omega, \mathds{R}^{N})$.}
	\end{array}
	\right.
	\label{compatibility_condition_for_u_o_and_u_ast}
\end{equation}
Further, suppose that the integrand $f \colon \Omega \times \R^N \times \R^{Nn} \to [0,\infty)$ satisfies
\begin{equation}
	\left\{
	\begin{array}{l}
		\mbox{$x \mapsto f(x,u,\xi) \in L^1(\Omega)$ for any $(u,\xi) \in \R^N \times \R^{Nn}$,} \\[5pt]
		\mbox{$(u,\xi) \mapsto f(x,u,\xi)$ is convex for a.e.~$x \in \Omega$,} \\[5pt]
		\mbox{$\nu |\xi|^p \leq f(x,u,\xi) \le L (\vert \xi \vert^{p} + \vert u \vert^{p} + G(x))$}\\
        \mbox{for a.e.~$x \in \Omega$ and all $(u,\xi) \in \R^N \times \R^{Nn}$,} 
	\end{array}
	\right.
	\label{eq:integrand}
\end{equation}
with constants $0< \nu \le L$, $p \in (1,\infty)$, and $G \in L^1(\Omega_{T})$.
As a consequence, by \cite[Lemma 2.1]{Marcellini} there exists $c=c(p,L)$ such that the local Lipschitz condition
\begin{align}\label{ineq:Lipschitz_condition}
    \vert f(x,u_{1}, \xi_{1} ) - &f(x,u_{2}, \xi_{2} ) \vert \nonumber \\
    &\le c\Big[ (\vert \xi_{1} \vert + \vert \xi_{2} \vert + \vert u_{1} \vert + \vert u_{2} \vert)^{p-1} + \vert G(x) \vert^{\frac{1}{p'}}  \Big](\vert u_{1} - u_{2} \vert + \vert \xi_{1} - \xi_{2} \vert )
\end{align}
holds true for any $(u_{1}, \xi_{1}), (u_{2}, \xi_{2}) \in \mathds{R}^{N} \times
\mathds{R}^{Nn}$ and a.e.~$x \in \Omega$. Throughout the paper, $p':= \frac{p}{p-1}$ denotes the H\"older-conjugate to $p$.

Next, concerning function spaces, on the level of the time slices we consider  
\begin{align*}
	V_t :=
	\big\{ v \in W^{1,p}_{u_{\ast}}(\Omega,\R^N) : v=u_{\ast} \text{ a.e.~in } \Omega \setminus E^t \big\}.
\end{align*}
Further, we define the parabolic function spaces
\begin{align*}
	V^p(E) :=
	\big\{ v \in L^p\big(0,T;W^{1,p}_{u_\ast}(\Omega,\R^N)\big) : v(t) \in V_t \text{ for a.e.~} t \in [0,T) \big\},
\end{align*}
and
\begin{align*}
	V^p_q(E) :=
	V^p(E) \cap L^{q+1}(\Omega_T,\R^N), 
\end{align*}
equipped with the norms
\begin{align*}
    \| v \|_{V^p(E)}
	:=
	\| Dv \|_{L^p(\Omega_T,\R^{Nn})} +\| v \|_{L^p(\Omega_T,\R^{N})},
\end{align*}
and
\begin{align*}
    \| v \|_{V^p_q(E)}
	:=
	\| Dv \|_{L^p(\Omega_T,\R^{Nn})} + \| v \|_{L^{q+1}(\Omega_T,\R^N)},
\end{align*}
respectively. Moreover, we use the notations $V^{p,0}(E)$ and
$V^{p,0}_q(E)$ for the spaces that are defined as above with $u_\ast\equiv0$ and equipped with the norms $\| \cdot \|_{V^p(E)}$ and $\| \cdot \|_{V^p_q(E)}$, respectively. 
For $u \in \R^N$, we use the shorthand notation
\begin{align*}
    \power{u}{q}
	:=
	|u|^{q-1} u,
\end{align*}
and for $u,v \in \R^N$ we define the boundary term by
\begin{align*}
	\b[u,v]
	&:=
	\tfrac{1}{q+1} |v|^{q+1} - \tfrac{1}{q+1} |u|^{q+1} - \power{u}{q} \cdot (v-u) \\
	&=
	\tfrac{1}{q+1} |v|^{q+1} + \tfrac{q}{q+1} |u|^{q+1} - \power{u}{q} \cdot v.
\end{align*}

This allows us to define variational solutions to \eqref{eq:system}.
Note that the variational inequality \eqref{eq:variational_inequality} can be derived from \eqref{eq:system}$_{1}$ analogously as in \cite{BDMS18-1}.
\begin{definition}[Variational solutions]\label{Definition:variational_solution}
Let $E \subset \Omega_T$ be relatively open, and assume that the variational integrand $f \colon \Omega \times \R^N \times \R^{Nn} \to [0,\infty)$ satisfies \eqref{eq:integrand}. Furthermore, suppose that $u_{\ast}, u_o \colon \Omega \to \mathds{R}^{N}$ fulfill the conditions \eqref{compatibility_condition_for_u_o_and_u_ast}.
We call a measurable map
\begin{align}\label{measurable_map_var_ineq}
    u \in L^\infty \big( 0,T;L^{q+1}(\Omega,\R^N) \big) \cap V^p_q(E)
\end{align}
a \emph{variational solution} to \eqref{eq:system} if and only if the variational inequality
\begin{align}
	\iint_{E\cap \Omega_\tau} &f(x,u,Du) \,\dx\dt
	\nonumber \\ &\leq
	\iint_{E\cap \Omega_\tau} f(x,v,Dv) \,\dx\dt
	+\iint_{E\cap \Omega_\tau} \partial_t v \cdot (\power{v}{q} - \power{u}{q}) \,\dx\dt
    \label{eq:variational_inequality} \\
	&\phantom{=}
	-\int_{E^\tau}	\b[u(\tau),v(\tau)] \,\dx
	+\int_{E^0} \b[u_o,v(0)] \,\dx
    \nonumber
\end{align}
holds true for a.e.~$\tau \in (0,T)$ and any comparison map $v \in V^p_q(E)$ with time derivative $\partial_t v \in L^{q+1}(\Omega_T,\R^N)$.
\end{definition}

\subsection{Main results}
\label{sec:main-results}
Now, we state the main result regarding the existence of variational solutions.
Note that the lower bound on $p$ can be omitted if $q=1$, cf. \cite{BDMS18-2}.
However, in the doubly nonlinear case $q\neq1$, we need the restriction on $p$ in order to show almost everywhere convergence of the approximations, which enables us to pass to the limit in the nonlinear time term of the variational inequality.
The condition on $p$ stems from an application of the Gagliardo-Nirenberg inequality in the proof of Theorem \ref{thm:existence_in_general_domains}.
More precisely, we will use the fact that $v \in
L^{\infty} \big(0,T;L^{q+1}(\Omega, \mathds{R}^{N}) \big) \cap
L^{p} \big(0,T;W_{0}^{1,p}(\Omega, \mathds{R}^{N}) \big)$ implies that $v$ is integrable to the power $\tfrac{p(n+q+1)}{n}>q+1$. 
\begin{theorem}
\label{thm:existence_in_general_domains}
Let $E \subset \Omega_{T}$ be a relatively open noncylindrical domain satisfying \eqref{eq:boundary_E_zero_measure}, and let $p > \max \Big\{ 1, \tfrac{n(q+1)}{n+q+1} \Big\}$.
Further, suppose that the variational integrand $f$ satisfies \eqref{eq:integrand}, and that \eqref{compatibility_condition_for_u_o_and_u_ast} holds for the initial and boundary values.
Then, there exists a variational solution to \eqref{eq:system} in the sense of Definition~\ref{Definition:variational_solution}.
\end{theorem}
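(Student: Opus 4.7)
The strategy, following the blueprint of \cite{BDSS18}, is to reduce the problem to the cylindrical (or at least nondecreasing) case, which is covered by the existence theorem from \cite{SSSS}. For each $h>0$, partition $[0,T)$ into finitely many subintervals $I_j^h = [t_{j-1}^h, t_j^h)$ of length at most $h$, set $\Omega_j^h := \bigcup_{t\in I_j^h} E^t$, and form the approximating domain $E_h := \bigcup_j \Omega_j^h \times I_j^h \supset E$, which is a finite union of space-time cylinders. Since $\L^{n+1}(\partial E)=0$, one has $\L^{n+1}(E_h\setminus E)\to 0$ as $h\to 0$. Solving inductively on each cylinder $\Omega_j^h \times I_j^h$ by the existence theorem for nondecreasing (in particular cylindrical) domains from \cite{SSSS}, and taking the endpoint of one subinterval (extended by $u_\ast$ where necessary) as the initial value for the next, produces a variational solution $u_h$ on $E_h$ with the prescribed initial datum $u_o$ and lateral boundary datum $u_\ast$.

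To obtain uniform a priori bounds, I would test the variational inequality for $u_h$ with the stationary comparison map $v \equiv u_\ast$, which is admissible thanks to \eqref{compatibility_condition_for_u_o_and_u_ast}. Using the coercivity in \eqref{eq:integrand}, the $L^1$-bound on $f(x,u_\ast,Du_\ast)$, and the elementary estimate $\b[u_h(\tau),u_\ast]\geq c|u_h(\tau)|^{q+1}-C(u_\ast)$ obtained by Young's inequality with $\epsilon$, one deduces that $u_h$ is uniformly bounded in $L^\infty\bigl(0,T;L^{q+1}(\Omega,\R^N)\bigr)\cap L^p\bigl(0,T;W^{1,p}(\Omega,\R^N)\bigr)$. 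Since $u_h=u_\ast$ on the complement of $E_h$, these bounds transfer to the fixed ambient cylinder $\Omega_T$. A standard subsequence extraction yields weak convergence $u_h\rightharpoonup u$ in $L^p(0,T;W^{1,p})$ and weak-$\ast$ convergence in $L^\infty(0,T;L^{q+1})$, and the condition $\L^{n+1}(E_h\setminus E)\to 0$ guarantees that the limit $u$ belongs to $V^p_q(E)$.

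The decisive step, and the principal obstacle, is upgrading weak to a.e.~convergence of $u_h$; this is needed to pass to the limit in the nonlinear time terms $\power{u_h}{q}$ inside $\b$. Here I would combine the Gagliardo--Nirenberg interpolation, which converts the uniform $L^\infty(0,T;L^{q+1})\cap L^p(0,T;W^{1,p})$-bound into a uniform bound in $L^{p(n+q+1)/n}(\Omega_T)$ whose exponent is strictly larger than $q+1$ precisely by virtue of the hypothesis $p>n(q+1)/(n+q+1)$, with a time-regularity estimate for $\power{u_h}{q}$ extracted from the variational inequality itself. Specifically, inserting test maps of the form $v = u_h \pm \epsilon\varphi$ for $\varphi\in C_c^\infty(E,\R^N)$ (admissible since $E\subset E_h$) and exploiting the Lipschitz bound \eqref{ineq:Lipschitz_condition} on $f$ yields a uniform estimate on $\partial_t\power{u_h}{q}$ in the dual of a suitable Sobolev space. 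An Aubin--Lions type compactness argument then produces strong convergence of $\power{u_h}{q}$ in $L^1(\Omega_T)$, and hence a.e.~convergence of $u_h$ along a further subsequence.

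Equipped with a.e.~convergence, I would finally pass to the limit $h\to 0$ in the variational inequality for $u_h$: the gradient term on the left-hand side by Ioffe-type lower semicontinuity based on the convexity of $(u,\xi)\mapsto f(x,u,\xi)$; the energy term on the right by the upper growth in \eqref{eq:integrand} and dominated convergence using $\L^{n+1}(E_h\setminus E)\to 0$; the time-derivative term $\partial_t v\cdot(\power{v}{q}-\power{u_h}{q})$ by weak convergence combined with $\partial_t v\in L^{q+1}$; and the slicewise boundary contributions involving $\b$ by Vitali's theorem, combining the a.e.~convergence with the uniform $L^{q+1}$-bound. The resulting inequality is precisely \eqref{eq:variational_inequality} on $E$, so the limit $u$ is the desired variational solution.
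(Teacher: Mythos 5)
Your overall architecture (covering $E$ by finitely many space-time cylinders of small height, solving on each cylinder via the nondecreasing-domain result of \cite{SSSS} with the terminal slice of one cylinder feeding the next, gluing, testing with $v\equiv u_\ast$ for uniform $L^\infty(0,T;L^{q+1})\cap L^p(0,T;W^{1,p})$ bounds, and identifying the boundary values using $\L^{n+1}(\partial E)=0$) is exactly the paper's strategy. The decisive divergence, and the place where your argument has a genuine gap, is the a.e.\ convergence step. You propose to obtain a uniform bound on $\partial_t \power{u_h}{q}$ by inserting $v=u_h\pm\epsilon\varphi$ into the variational inequality for $u_h$. This is not admissible: comparison maps in \eqref{eq:variational_inequality} (and in the inequalities on the subcylinders) must satisfy $\partial_t v\in L^{q+1}$, and $u_h$ itself has no such time derivative at this stage, so $u_h\pm\epsilon\varphi$ cannot be used as a comparison map. (A distributional bound on $\partial_t\power{u_h}{q}$ can in fact be had, but only by importing the dual-space estimate for $\partial_t\power{u_{\ell,i}}{q}$ from \cite[Theorem~2.4]{SSSS} on each subcylinder, which is how the paper proves Theorem~\ref{thm:time_derivative_in_the_dual_space_general_domains}; it is not extracted from the variational inequality by perturbing the solution.)

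Even granting such a bound, the second half of your compactness step is not routine: classical Aubin--Lions does not apply, because the spatial information (boundedness in $L^p(0,T;W^{1,p})$) is carried by $u_h$ while the time regularity is carried by the nonlinear quantity $\power{u_h}{q}$; identifying strong convergence requires a nonlinear compactness argument of Alt--Luckhaus type (estimating time differences paired as $(\power{u_h}{q}(t+s)-\power{u_h}{q}(t))\cdot(u_h(t+s)-u_h(t))$), which you would have to carry out on the noncylindrical set $E$ and which your sketch does not supply. The paper sidesteps both issues by a self-contained mollification argument: it sets $w_{\ell,\lambda}=u_\ast+[u_\ell-u_\ast]_\lambda$, builds cut-off comparison maps $v_{\ell,\lambda,\epsilon}$ supported near a fixed subcylinder $Q_{\rho,s}\Subset E$, inserts them into the variational inequality for $u_\ell$, and bounds $\iint_{Q_{\rho,s}}(w_{\ell,\lambda}-u_\ell)\cdot(\power{w_{\ell,\lambda}}{q}-\power{u_\ell}{q})\,\dx\dt$ uniformly in $\ell$; the hypothesis $p>\tfrac{n(q+1)}{n+q+1}$ enters there through Gagliardo--Nirenberg (integrability to the power $\tfrac{p(n+q+1)}{n}>q+1$) to control the error term near the cut-off, after which strong local convergence, a diagonal argument, and a.e.\ convergence follow. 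Once a.e.\ convergence is in hand, your proposed limit passage in the variational inequality (Ioffe/lower semicontinuity for the energy, weak$^\ast$ identification of $\power{u_\ell}{q}$, Fatou-type treatment of the nonnegative boundary term $\b$) is essentially equivalent to the paper's, which additionally averages over $\tau\in[t_o,t_o+\delta]$ and uses Lebesgue differentiation to recover the pointwise-in-time boundary term. So: same skeleton, but the core compactness mechanism as you state it would fail without replacing the inadmissible test map and supplying a nonlinear Aubin--Lions/Alt--Luckhaus lemma, or adopting the paper's mollification argument.
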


\begin{remark}
\label{rem:initial_condition_convergence}
In \cite[Lemma~3.14]{SSSS} we have shown that any variational solution admits the initial values $u(0) = u_o$ in the $L_{\text{loc}}^{q+1}$-sense, i.e.,
\begin{align*}
    \lim_{h \to 0} \tfrac{1}{h} \int_{0}^{h} \Vert u(t)-u_o \Vert_{L^{q+1}(K, \mathds{R}^{N})}^{q+1} \,\dt =0
\end{align*}
for any compact set $K \subset E^{0}$.
If $u_o$ and $u_{\ast}$ coincide on $\Omega \setminus E^0$, i.e.~$u_o \equiv u_{\ast}$ on $\Omega \setminus E^0$, $K$ can be replaced by $\Omega$.
\end{remark}

In order to derive basic regularity properties of the solutions, we need to introduce additional assumptions.
First, for any $t \in [0,T)$ we assume that the complement $\Omega \setminus E^{t}$ of the time slice $E^{t}$ satisfies the measure density condition
\begin{align}\label{ineq:lower_measure_bound}
    \L^{n} \big((\mathds{R}^n \setminus E^{t}) \cap B_{r}(x_{0}) \big) \ge \delta \L^{n}(B_{r}(x_{0}))
    \quad \text{for all } x_{0} \in \partial E^{t} \text{ and } r >0
\end{align}
with a constant $\delta >0$.
Moreover, we suppose that the speed at which $E$ is allowed to shrink, is bounded.
To this end, for the complementary excess
\begin{align*}
    \boldsymbol{e}^{c}(E^{s}, E^{t}):= \sup_{x \in \Omega \setminus E^{t}} \dist(x, \Omega \setminus E^{s}),
\end{align*}
we impose the one-sided growth condition
\begin{align}\label{ineq:one_sided_growth_with_modulus}
\boldsymbol{e}^{c}(E^{s},E^{t})\leq\omega(t-s)\quad\text{whenever }0\leq s\leq t<T,
\end{align}
where $\omega:[0,\infty)\rightarrow[0,\infty)$ with $\omega(0)=0$ denotes a modulus of continuity.
Without loss of generality, we may assume that $\omega$ is strictly increasing.
Observe that \eqref{ineq:one_sided_growth_with_modulus} prevents the formation of holes inside the domain $E$.

Now, we are ready to give the following statement regarding the time derivative of $\power{u}{q}$ in the dual space.
As usual, its norm is given by 
\begin{align}
    \Vert L \Vert_{(V^{p,0}(E))^{\prime}}:= \sup_{0 \neq u \in V^{p,0}(E)} \tfrac{\vert \langle L,u \rangle \vert}{\Vert u \Vert_{V^{p}(E)}} \quad \text{ for all } L \in (V^{p,0}(E))^{\prime},
    \label{eq:operator-norm}
\end{align}
where $\langle \cdot, \cdot \rangle$ denotes the duality pairing between $(V^{p,0}(E))^{\prime}$ and $V^{p,0}(E)$.

\begin{theorem}\label{thm:time_derivative_in_the_dual_space_general_domains}
Assume that the noncylindrical domain $E$ fulfills \eqref{eq:boundary_E_zero_measure}, \eqref{ineq:lower_measure_bound} and \eqref{ineq:one_sided_growth_with_modulus}, that the initial and boundary values satisfy \eqref{compatibility_condition_for_u_o_and_u_ast}, and that the integrand $f$ satisfies \eqref{eq:integrand}.
Further, let $u$ be the variational solution constructed in Theorem~\ref{thm:existence_in_general_domains}.
Then, $\power{u}{q}$ possesses a distributional derivative 
    \begin{align*}
        \partial_t \power{u}{q} \in (V^{p,0}(E))^{\prime},
    \end{align*}
    with the estimate
    \begin{align*}
        &\Vert   \partial_t \power{u}{q} \Vert_{(V^{p,0}(E))^{\prime}}
        \\ &\le
        c \Big[ T \big( \Vert u_{\ast} \Vert_{W^{1,p}(\Omega,\R^N)}^p + \| G \|_{L^1(\Omega)} \big)
        + \Vert u_o \Vert_{L^{q+1}(E^{0}, \mathds{R}^{N})}^{q+1} + \Vert u_{\ast} \Vert_{L^{q+1}(\Omega, \mathds{R}^{N})}^{q+1} \Big]^{\frac{1}{p'}},
    \end{align*}
    where $c=c(p,q, \nu, L, \diam(\Omega))$ is a constant.
\end{theorem}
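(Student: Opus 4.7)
\emph{Strategy.} The plan is to derive the estimate via two ingredients: a uniform energy bound for $u$ obtained from a well-chosen comparison map, and a uniform bound on $\partial_t \power{u_h}{q}$ in the dual at the level of the approximations $u_h$ produced in the proof of Theorem~\ref{thm:existence_in_general_domains}, which --- unlike $u$ itself --- possess a genuine weak time derivative and satisfy an Euler--Lagrange equation. The target bound then transfers to $u$ by weak-$\ast$ compactness in $(V^{p,0}(E))'$ combined with $L^1$-convergence of the powers $\power{u_h}{q}$.

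\emph{Energy bound.} First, I would test the variational inequality \eqref{eq:variational_inequality} with the time-independent comparison map $v \equiv u_{\ast}$, which is admissible since $\partial_{t} u_{\ast} = 0$ and $u_{\ast} \in V^{p}_q(E)$. Using that $\b[u(\tau), u_{\ast}] \geq 0$ (which follows from convexity of $w \mapsto |w|^{q+1}/(q+1)$), this yields
\begin{equation*}
  \iint_{E} f(x,u,Du) \,\dx\,\dt \leq \iint_{\Omega_T} f(x, u_{\ast}, Du_{\ast}) \,\dx\,\dt + \int_{E^{0}} \b[u_{o}, u_{\ast}] \,\dx.
\end{equation*}
Combining the coercivity $f \geq \nu|\xi|^p$, the upper growth assumption in \eqref{eq:integrand}, the elementary bound $\b[u_{o}, u_{\ast}] \leq c(|u_{o}|^{q+1} + |u_{\ast}|^{q+1})$, and the Poincar\'e inequality (applicable since $u - u_{\ast}$ vanishes on $\Omega \setminus E^{t}$), one deduces
\begin{equation*}
  \|u\|_{L^p(E,\mathds{R}^N)}^p + \|Du\|_{L^p(E,\mathds{R}^{Nn})}^p \leq c(\diam\Omega)\, M,
\end{equation*}
where $M$ abbreviates the bracket on the right-hand side of the theorem. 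The analogous estimate holds uniformly for the approximations $u_{h}$.

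\emph{Euler--Lagrange equation for $u_h$ and duality estimate.} Since each $u_{h}$ arises from a time-discrete minimization scheme in the construction of Theorem~\ref{thm:existence_in_general_domains}, it satisfies a weak Euler--Lagrange equation
\begin{equation*}
  \langle \partial_{t} \power{u_{h}}{q}, \varphi \rangle = -\iint_{E} \big[ D_{\xi}f(\cdot, u_{h}, Du_{h}) \cdot D\varphi + D_{u}f(\cdot, u_{h}, Du_{h}) \cdot \varphi \big] \,\dx\,\dt
\end{equation*}
for $\varphi \in V^{p,0}(E)$. By the Lipschitz estimate \eqref{ineq:Lipschitz_condition}, the (sub)differentials of $f$ are controlled pointwise by $c(|u_h|^{p-1} + |Du_h|^{p-1} + G^{1/p'})$ a.e., so H\"older's inequality with exponents $p'$ and $p$, combined with the energy bound $\|u_h\|_{V^p(E)}^{p-1} \leq cM^{1/p'}$ and $\|G\|_{L^1(E)} \leq T\|G\|_{L^1(\Omega)} \leq M$, produces
\begin{equation*}
  \big| \langle \partial_{t}\power{u_{h}}{q}, \varphi \rangle \big| \leq c\big( \|u_{h}\|_{V^{p}(E)}^{p-1} + \|G\|_{L^{1}(E)}^{1/p'} \big) \|\varphi\|_{V^{p}(E)} \leq c\, M^{1/p'} \|\varphi\|_{V^{p}(E)}
\end{equation*}
uniformly in $h$.

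\emph{Passage to the limit and main obstacle.} By weak-$\ast$ compactness in $(V^{p,0}(E))'$, along a subsequence $\partial_{t}\power{u_{h}}{q} \wsto T$ with $\|T\|_{(V^{p,0}(E))'} \leq c M^{1/p'}$. Combining the almost everywhere convergence $u_{h} \to u$ established in the proof of Theorem~\ref{thm:existence_in_general_domains} with the uniform $L^{\infty}(0,T; L^{q+1})$-bound extracted from $\esssup_\tau \int_{E^\tau} \b[u_h(\tau), u_\ast] \,\dx$, Vitali's theorem yields $\power{u_{h}}{q} \to \power{u}{q}$ in $L^{1}(E, \mathds{R}^{N})$; hence the distributional derivatives converge, identifying $T$ with $\partial_{t}\power{u}{q}$. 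The main obstacle is conceptual rather than computational: because $u$ itself does not a priori admit a weak time derivative, the natural perturbations $v = u + s\varphi$ are inadmissible in the variational inequality, so a direct derivation of the Euler--Lagrange equation from convexity is blocked. The detour through the approximations $u_{h}$ circumvents this, but it is essential that the geometric assumptions \eqref{ineq:lower_measure_bound} and \eqref{ineq:one_sided_growth_with_modulus} on $E$ ensure the approximations converge strongly enough to identify the limit distributionally.
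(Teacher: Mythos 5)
Your overall strategy (uniform dual-space bound for the approximations from Theorem~\ref{thm:existence_in_general_domains}, then passage to the limit using the convergence $\power{u_\ell}{q}\to\power{u}{q}$) is the same as the paper's, but the central technical step is missing. The approximations are not single variational solutions on $E$: they are obtained by gluing solutions $u_{\ell,i}$ on the subcylinders $Q_{\ell,i}$, and each piece only has $\partial_t\power{u_{\ell,i}}{q}\in (V^{p,0}(Q_{\ell,i}))'$ together with the quantitative bound from \cite[Theorem~2.4]{SSSS}. Your proposal simply asserts that the glued function "possesses a genuine weak time derivative and satisfies an Euler--Lagrange equation" on all of $E$; this is exactly what has to be proved, because a priori $\power{u_\ell}{q}$ could jump across the gluing times $t_{\ell,i}$ and contribute singular (Dirac-in-time) terms to the distributional derivative. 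The paper handles this by testing with $\psi_{\ell,i}^{(\epsilon)}\phi$ for time cut-offs $\psi_{\ell,i}^{(\epsilon)}$ vanishing at the interval endpoints, summing over $i$, and showing that the remainder $R_\ell^{(\epsilon)}$ converges, as $\epsilon\downarrow0$, to a telescoping sum of boundary terms that cancels precisely because the initial data were chosen as $u_{\ell,i}^{(0)}=u_{\ell,i-1}(t_{\ell,i-1})$ on $E_{\ell,i}$ and $\spt\phi(t_{\ell,i-1})\subset E^{t_{\ell,i-1}}\subset E_{\ell,i}$, together with $\phi(0)=\phi(T)=0$. Without an argument of this kind your uniform estimate for $\langle\partial_t\power{u_\ell}{q},\varphi\rangle$ over $E$ is unjustified, and the subsequent weak-$*$ compactness step has nothing to act on.

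Two further points. First, the Euler--Lagrange equation you invoke is not available: $f$ is only convex, not $C^1$, so $D_\xi f$, $D_u f$ need not exist; the paper (and \cite{SSSS}) instead work directly with the bound $\big|\iint \power{u_{\ell,i}}{q}\cdot\partial_t\phi\big|\le c\iint\big[|Du_{\ell,i}|^{p-1}+|u_{\ell,i}|^{p-1}+|G|^{1/p'}\big]\big[|D\phi|+|\phi|\big]$, which follows from the Lipschitz estimate \eqref{ineq:Lipschitz_condition}; your subdifferential remark points in this direction but should replace, not accompany, the Euler--Lagrange identity. Second, you misattribute the role of the hypotheses \eqref{ineq:lower_measure_bound} and \eqref{ineq:one_sided_growth_with_modulus}: they are not needed for the convergence of the approximations (that was already done under \eqref{eq:boundary_E_zero_measure} alone), but for the density of $C_0^\infty(E,\R^N)$ in $V^{p,0}(E)$ (Remark~\ref{rem:C_0_inf_is_dense_in_V_p,0}), which is what allows one to upgrade the estimate tested against smooth compactly supported functions to the operator-norm bound in $(V^{p,0}(E))'$ claimed in the theorem.
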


In order to conclude from Theorem \ref{thm:time_derivative_in_the_dual_space_general_domains} that $u$ is continuous in time, we need to replace \eqref{ineq:one_sided_growth_with_modulus} by the stricter condition
\begin{align}\label{one sided growth condition}
    \power{e}{c} \big(E^{s}, E^{t} \big) \le \vert \rho(t)-\rho(s) \vert
    \quad \text{for $0 \le s \le t <T$,}
\end{align}
where the function $\rho:(-1,T) \rightarrow(0,\infty)$ satisfies
$\rho\in W^{1,r}(-1,T)$ for
\begin{align}\label{definition_of_r_for_one_sided_growth}
  \left\{
  \begin{array}{ll}
    r = p', & \text{if }p \geq q+1,\\[1.5ex]
    r = \frac{p(n+q+1)-n(q+1)}{p(n+q+1)-(n+1)(q+1)}, & \text{if } \frac{(n+1)(q+1)}{n+q+1} < p < q+1,\\[1.5ex]
    r = \infty, &\text{if } p = \frac{(n+1)(q+1)}{n+q+1}.
  \end{array}
  \right.
\end{align}

\begin{remark}
Note that the lower bound $\frac{p(n+q+1)-n(q+1)}{p(n+q+1)-(n+1)(q+1)}$ goes to $p'$ when $p \uparrow q+1$, and it blows up in the limit $p \downarrow \frac{(n+1)(q+1)}{n+q+1}$.
Further, by the embedding properties of the one-dimensional Sobolev space, \eqref{one sided growth condition} implies \eqref{ineq:one_sided_growth_with_modulus} with $\omega(t)=\|\rho'\|_{L^r(0,T)}t^{1-\frac{1}{r}}$.
\end{remark}

\begin{theorem}\label{continuity_in_time_for_general_domains}
Assume that $E$ satisfies \eqref{eq:boundary_E_zero_measure}, \eqref{ineq:lower_measure_bound} and \eqref{one sided growth condition}, that
$p \geq \tfrac{(n+1)(q+1)}{n+q+1}$,
that the initial and boundary values fulfill \eqref{compatibility_condition_for_u_o_and_u_ast}, and that the integrand $f$ fulfills \eqref{eq:integrand}.
Then, any variational solution $u$ to \eqref{eq:system} in the sense of Definition \ref{Definition:variational_solution} that possesses a time derivative $\partial_t \power{u}{q} \in (V^{p,0}(E))'$ satisfies
\begin{align*}
    u \in C^{0}([0,T); L^{q+1}(\Omega, \mathds{R}^{N})).
\end{align*}
\end{theorem}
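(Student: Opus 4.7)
The strategy is to reduce the claimed $L^{q+1}$-continuity to a quantitative bound on the $\b$-functional and then to derive this bound from the variational inequality combined with the Sobolev regularity of the shrinking function $\rho$. For a fixed base point $t_{0} \in [0,T)$ and $t \to t_{0}$, the starting point is the identity
\begin{align*}
    \b[u(t_{0}), u(t)] + \b[u(t), u(t_{0})] = \big( \power{u(t)}{q} - \power{u(t_{0})}{q} \big) \cdot \big( u(t) - u(t_{0}) \big),
\end{align*}
together with the standard algebraic estimate $(\power{a}{q} - \power{b}{q}) \cdot (a - b) \gtrsim_{q} |a - b|^{q+1}$ in the case $q \ge 1$ (and its strong-convexity variant for $0 < q < 1$, where the missing factor $(|a| + |b|)^{q-1}$ is absorbed via the $L^{\infty}(0,T; L^{q+1})$-bound on $u$). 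This reduces the continuity claim to proving
\begin{align*}
    \int_{\Omega} \Big( \b[u(t_{0}), u(t)] + \b[u(t), u(t_{0})] \Big) \,\dx \longrightarrow 0 \quad \text{as } t \to t_{0}.
\end{align*}

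To control each of the two $\b$-terms I would apply the variational inequality \eqref{eq:variational_inequality} with carefully chosen comparison maps. For $\int_{\Omega} \b[u(t), u(t_{0})] \,\dx$, one tests at the endpoint $\tau = t$ with a map $v$ whose value at $\tau = t$ equals $u(t_{0})$, constructed by time-mollification combined with a spatial cutoff that forces $v(\tau) \in V_{\tau}$ for all intermediate $\tau$. The obstruction is that $u(t_{0})$ need not lie in $V_{\tau}$ once $E^{\tau}$ has shrunk relative to $E^{t_{0}}$; by \eqref{one sided growth condition}, however, the offending set sits inside a narrow strip of points whose distance to $\Omega \setminus E^{t_{0}}$ is at most $|\rho(\tau) - \rho(t_{0})|$, on which the cutoff smoothly interpolates $u(t_{0})$ with the boundary datum $u_{\ast}$. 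The analogous argument at $\tau = t_{0}$ yields $\int_{\Omega} \b[u(t_{0}), u(t)] \,\dx$; combining the two applications, the surviving slice contributions give precisely the $\b$-integrals of interest, modulated by error terms supported on the strips. The measure density condition \eqref{ineq:lower_measure_bound} is used here to guarantee that the spatial cutoff can be constructed with energy controlled by the strip thickness.

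The crux is to show that these strip-errors vanish as $t \to t_{0}$. Estimated via \eqref{ineq:Lipschitz_condition} and Hölder on the strip, a typical contribution involves a factor $|\rho(\tau) - \rho(t_{0})|^{\alpha}$ times an $L^{s}$-norm of $u$ on the cylinder, for exponents $\alpha, s$ depending on $p, q, n$. Here the Gagliardo--Nirenberg inequality enters: since $u \in L^{\infty}(0, T; L^{q+1}) \cap L^{p}(0, T; W^{1,p})$, $u$ is integrable up to the exponent $\tfrac{p(n+q+1)}{n}$, and the admissible $\alpha$ is determined by interpolating this integrability against the strip's small measure. Integrating in $\tau \in [t_{0}, t]$ and applying Hölder with the precise exponent $r$ from \eqref{definition_of_r_for_one_sided_growth} yields an error bound of the form $c \|\rho'\|_{L^{r}(t_{0}, t)} |t - t_{0}|^{\gamma}$ with $\gamma > 0$, which vanishes as $t \to t_{0}$ by absolute continuity of the $L^{r}$-norm. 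The three regimes of \eqref{definition_of_r_for_one_sided_growth} encode exactly how the Gagliardo--Nirenberg--Hölder pairing must balance in the ranges $p \ge q+1$, $\tfrac{(n+1)(q+1)}{n+q+1} < p < q+1$, and $p = \tfrac{(n+1)(q+1)}{n+q+1}$, the last forcing $r = \infty$, and the lower bound $p \ge \tfrac{(n+1)(q+1)}{n+q+1}$ is sharp for positivity of $\gamma$. This is the main technical obstacle.

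Finally, the hypothesis $\partial_{t} \power{u}{q} \in (V^{p,0}(E))'$ is used to promote the bound above, which the variational inequality delivers only for a.e.\ $t \in [0, T)$, to every $t \in [0, T)$: the $\b$-functional differences are identified with duality pairings of the form $\langle \partial_{t} \power{u}{q}, \cdot \rangle$ whose time antiderivatives are continuous on $[0, T)$, so strong $L^{q+1}$-continuity in $t$ is obtained at every point.
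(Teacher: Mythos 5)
Your reduction to the $\b$-functional and your identification of the technical ingredients (strip cutoffs of width governed by $\rho$, the Hardy inequality via \eqref{ineq:lower_measure_bound}, Gagliardo--Nirenberg, and the three regimes of \eqref{definition_of_r_for_one_sided_growth}) point in the right direction, but the proposal has a structural gap at its core: you test the \emph{global} variational inequality \eqref{eq:variational_inequality}, which runs from time $0$ to $\tau$, with a comparison map that is only prescribed near the endpoint. To conclude that $\int_\Omega\b[u(t),u(t_0)]\,\dx\to 0$ as $t\to t_0$ you must make the entire right-hand side small, and the contributions on $(0,t_0)$ are $O(1)$ unless they cancel against $\iint_{E\cap\Omega_{t_0}}f(x,u,Du)$; arranging this cancellation requires a comparison map that is admissible on the noncylindrical domain and close to $u$ on $(0,t_0)$. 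The Landes mollification of $u$ is \emph{not} admissible there (the time slices vary, so $[u-u_\ast]_\lambda(\tau)$ need not vanish outside $E^\tau$), and repairing admissibility by a spatial cutoff introduces errors on all of $(0,t_0)$ that vanish only as the cutoff parameter tends to zero, while the cutoff gradient blows up -- the interplay of these limits with $t\to t_0$ is exactly the delicate point and is not addressed. In the paper this difficulty is resolved by an entirely separate mechanism: an integration-by-parts formula in duality (Theorem~\ref{thm:ineq_integration_by_parts_formula}), which is where \eqref{one sided growth condition}, Hardy's inequality and the three regimes actually enter; from it one first proves \emph{left}-sided continuity (Lemmas~\ref{lem:time-continuity-cylindrical}--\ref{lem:left-sided_continuity}), then a \emph{localized} variational inequality on $E\cap(\Omega\times(\tau_o,\tau))$ (which is what replaces your missing control of $(0,t_0)$), and only then right-sided continuity by inserting a spatially mollified $u(\tau_o)$, constant in time, and exploiting the prefactor $(\tau-\tau_o)$ before letting the mollification parameter go to zero. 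Your symmetric treatment of the two time directions also overlooks that \eqref{one sided growth condition} is one-sided: the domain may grow arbitrarily (even jump outward), so the strips controlled by $\rho$ only repair admissibility in one direction, and indeed the paper's arguments for $\tau\uparrow\tau_o$ and $\tau\downarrow\tau_o$ are genuinely different.

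A second, related gap is the role of the hypothesis $\partial_t\power{u}{q}\in(V^{p,0}(E))'$. In your sketch it appears only in a final ``upgrade from a.e.\ to every $t$'' step, justified by saying the $\b$-differences are duality pairings ``whose time antiderivatives are continuous''; as stated this does not make sense, since the pairing is against space-time elements of $V^{p,0}(E)$ and yields no pointwise-in-time $L^{q+1}$ information by itself. In the paper this hypothesis is load-bearing throughout: it is what makes the integration-by-parts formula available (together with the density of $C^\infty_0(E,\R^N)$ in $V^{p,0}_q(E)$, Corollary~\ref{cor:C_0_inf_is_dense_in_V_q_p,0}, which also relies on \eqref{ineq:lower_measure_bound} and is nowhere invoked in your proposal), and hence what drives both the left-continuity and the localization. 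Without this formula, or an equivalent duality argument replacing it, the variational inequality alone does not deliver the claimed continuity, so the proposal as written does not constitute a proof.
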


It would be interesting to determine whether the lower bound on $p$ in Theorem \ref{continuity_in_time_for_general_domains} is optimal.

\begin{remark}\label{rem:Petrovskii}
If $E$ shrinks too fast, we know that weak solutions to the Cauchy--Dirichlet problem \eqref{eq:system} with a continuous boundary and initial datum may be discontinuous in a boundary point, cf.~\cite{BBG17, BBGP15, KiLi, Lindqvist}.
In particular, for the parabolic $p$-Laplace equation in
\begin{align*}
    \Theta_\lambda := \big\{ (x,t) \in \mathds{R}^{n+1}: \vert x \vert < K |t|^{\lambda} \text{ and } -1<t<0 \big\}
\end{align*}
with a constant $K>0$, A.~Björn, J.~Björn and Gianazza prove Petrovski\u\i{}-type criteria in \cite[Theorem 1.1]{BBG17}.
For $p>2$, they obtain that the origin $(0,0)$ is regular if and only if $\lambda > \frac{1}{p}$.
Since
\begin{align*}
    \power{e}{c} \big(\Theta_\lambda^{s}, \Theta_\lambda^{t} \big)
    =
    |s|^\lambda - |t|^\lambda
\end{align*}
for $-1 < s \leq t < 0$, and since $\varrho(\cdot) = |\cdot|^\lambda \in W^{1,r}(-1,0)$ with $r=p'$ if and only if $\lambda > \frac{1}{p}$, our results are consistent with \cite[Theorem 1.1]{BBG17} in this case.
For $\frac{(n+1)(q+1)}{n+q+1}\le p< 2$, we have that $r>p'$ ($r=p'$ for $p=2$), and thus that \eqref{one sided growth condition} is stronger than the regularity condition $\lambda > \frac{1}{p}$ \big($\lambda \geq \frac{1}{2}$ for $p=2$\big) for the origin in \cite[Theorem 1.1]{BBG17}.
For the case of the heat equation and the porous medium equation ($p=2$ and $0<q<1$) there are more refined versions of Petrovski\u\i{}'s criterion. We refer to \cite[Section 3.3]{Galaktionov} for a discussion.

Overall, in view of the possible irregularity of boundary points we expect that variational solutions $u$ to \eqref{eq:system} neither satisfy the weaker continuity property $u \in C^0\big([0,T);L^{q+1}(\Omega,\R^N) \big)$ if $E$ decreases too fast.
Again, it would be interesting to find the optimal condition on the evolution of $E$ in this context.
\end{remark}

\subsection{Plan of the paper}
After collecting auxiliary results in Section~\ref{sec:preliminaries}, a separate section is dedicated to each of the three main results of the present paper.
In Section~\ref{sec:existence-proof} we prove Theorem~\ref{thm:existence_in_general_domains}, i.e.~the existence of variational solutions to \eqref{eq:system}.
In contrast to the case of nondecreasing domains treated in \cite{SSSS}, we are not able to use the method of minimizing movements for general noncylindrical domains.
Instead, we adopt the overall proof strategy that was employed for the special case $q=1$ in \cite{BDSS18} for such domains.
This means that we cover $E$ by a finite union of cylinders $Q_{\ell,i}$, $i=1,\ldots,\ell$, of height $0 < \frac{T}{\ell} \ll 1$ in the time direction.
Relying on the results of \cite{SSSS}, we obtain the existence of variational solutions $u_{\ell,i}$ in $Q_{\ell,i}$, where we choose initial values $u_o$ for $i=1$ and the values $u_{\ell,i-1}(ih_\ell)$ of the preceding solution at the last time slice of the preceding cylinder for $i=2,\ldots,\ell$.
Next, we define the map $u_\ell \colon \Omega_T \to \R^N$ by gluing the individual solutions $u_{\ell,i}$.
By means of energy estimates, a subsequence of $(u_\ell)_{\ell \in \N}$ converges weakly to a limit map $u$ in the right function space for a variational solution.
At this stage, we glue the variational inequalities satisfied by the functions $u_{\ell,i}$ and would like to pass to the limit $\ell \to \infty$.
However, due to the term with the integrand $\partial_t v \cdot \big(\power{v}{q} - \power{u_\ell}{q} \big)$, to this end we need to ensure that the limit of $\power{u_\ell}{q}$ with respect to weak convergence is $\power{u}{q}$.
Therefore, for general $q \neq 1$ an extra argument compared to the case $q=1$ in \cite{BDSS18} is required.
In fact, we use the variational inequalities to deduce that $u_\ell$ converges to $u$ a.e.~as $\ell \to \infty$.
In this connection, the lower bound on $p$ is necessary to apply the Gagliardo--Nirenberg inequality.

Next, in Section~\ref{sec:time-derivative-proof} we establish Theorem~\ref{thm:time_derivative_in_the_dual_space_general_domains}, which shows that under stronger conditions on $E$, that ensure that $C^\infty_0(E)$ is dense in $V^{p,0}(E)$, the power $\power{u}{q}$ of the variational solution $u$ constructed in Theorem~\ref{thm:existence_in_general_domains} admits a weak time derivative.
While the method of proof is analogous, we generalize the corresponding result in \cite{BDSS18} for the case $q=1$ and stricter conditions on $E$.
Using the bounds in \cite{SSSS} for the weak time derivatives of the maps $\power{u_{\ell,i}}{q}$ from the proof of Theorem~\ref{thm:existence_in_general_domains}, we deduce that $\partial_t \power{u_\ell}{q} \in (V^{p,0}(E))'$ with a quantitative bound that is independent of $\ell \in \N$.
Hence, passing to the limit $\ell \to \infty$, we conclude the desired property for $\power{u}{q}$.

Finally, in Section~\ref{sec:continuity-proof} we prove Theorem~\ref{continuity_in_time_for_general_domains}, i.e.~the continuity in time of variational solutions under suitable assumptions on $E$.
Since mollifications of a variational solution will not attain the correct boundary values in the general setting, this requires a delicate argument.
First, in Section~\ref{sec:integration-by-parts} we develop an integration by parts formula that might be of independent interest.
In this context, we were able to generalize the corresponding formula for $q=1$ in \cite{BDSS18} in two directions.
Namely, we deal with general $q \neq 1$, and we weaken the assumptions on the speed at which $E$ may shrink.
Moreover, we simplify the arguments of \cite{BDSS18}.
In particular, our proof shows transparently how the one-sided condition on $E$ leads to the inequality in the formula.
Next, by means of this integration by parts, we prove the left-sided continuity of certain variational solutions $u$ in time in Section~\ref{sec:left-sided-continuity}, and we derive a localized version with respect to time of the variational inequality in Section~\ref{sec:localization}.
In turn, this allows us to conclude the right-sided continuity of $u$ in time in Section~\ref{sec:right-sided-continuity}.

\section{Preliminaries}
\label{sec:preliminaries}
\subsection{Technical lemmas}
For the following lemma, we refer to \cite[Lemma 8.3]{Giusti}; see also \cite[Lemma 3.2]{BDKS20}.
\begin{lemma}\label{ineq:power_alpha_estimate_q_greater_0}
For any $\alpha > 0$ and $u, v \in \mathds{R}^{N}$, there exists a
constant $C$ depending only on $\alpha$ such that
\begin{align*}
    \tfrac{1}{C} \vert \power{v}{\alpha} - \power{u}{\alpha} \vert \le  ( \vert u \vert + \vert v \vert )^{\alpha -1} \vert v - u \vert \le C \vert \power{v}{\alpha} - \power{u}{\alpha} \vert.
\end{align*}
\end{lemma}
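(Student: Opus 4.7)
The plan is to represent $\power{v}{\alpha}-\power{u}{\alpha}$ as a line integral along the segment $w_t := (1-t)u + tv$. Setting $\phi(x):=\power{x}{\alpha}$, one computes
\begin{align*}
D\phi(w) \;=\; |w|^{\alpha-1}\mathrm{Id} + (\alpha-1)|w|^{\alpha-3}\,w\otimes w,
\end{align*}
whose eigenvalues are $|w|^{\alpha-1}$ (with multiplicity $N-1$, on $w^\perp$) and $\alpha|w|^{\alpha-1}$ (in direction $w$), so its operator norm is $\max(1,\alpha)|w|^{\alpha-1}$. On any segment avoiding the origin this yields
\begin{align*}
\power{v}{\alpha}-\power{u}{\alpha} \;=\; \int_0^1 D\phi(w_t)(v-u)\,\d t,
\end{align*}
and in the singular range $0<\alpha<1$, when the segment meets $0$, the identity persists as an improper integral because $|w_t|^{\alpha-1}$ is integrable along the affine curve (its zero set is discrete).

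Next, the elementary inequality $0 \le (w_t\cdot(v-u))^2 \le |w_t|^2|v-u|^2$ gives the two-sided pointwise bound
\begin{align*}
\min(1,\alpha)\,|w_t|^{\alpha-1}|v-u|^2
\;\le\; \bigl[D\phi(w_t)(v-u)\bigr]\cdot(v-u)
\;\le\; \max(1,\alpha)\,|w_t|^{\alpha-1}|v-u|^2,
\end{align*}
together with $|D\phi(w_t)(v-u)|\le\max(1,\alpha)|w_t|^{\alpha-1}|v-u|$. Testing the integral representation against $(v-u)$, using the lower bound above, and invoking Cauchy--Schwarz on the left-hand side reduces the lower estimate in the lemma to
\begin{align*}
\int_0^1 |w_t|^{\alpha-1}\,\d t \;\ge\; c_\alpha\,(|u|+|v|)^{\alpha-1}.
\end{align*}
Correspondingly, integrating the operator norm bound reduces the upper estimate to the reverse integral inequality
\begin{align*}
\int_0^1 |w_t|^{\alpha-1}\,\d t \;\le\; C_\alpha\,(|u|+|v|)^{\alpha-1},
\end{align*}
where the case $u=v$ is trivial and can be excluded.

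Thus the whole statement reduces to a two-sided comparison between $\int_0^1|(1-t)u+tv|^{\alpha-1}\d t$ and $(|u|+|v|)^{\alpha-1}$. Both sides are $(\alpha{-}1)$-homogeneous, so I would normalize $|u|+|v|=1$ and distinguish the case $|u|,|v|\ge\tfrac14$, where $|w_t|$ is uniformly bounded above and $|w_t|$ stays bounded below except possibly near a single point where the integrable singularity causes no issue, from the case $|u|\ll|v|$, where $|w_t|\sim|v|\sim 1$ away from a small neighborhood of $t=0$. The main obstacle is the singular regime $0<\alpha<1$ together with configurations where $v$ is nearly antiparallel to $u$, since then the segment passes arbitrarily close to $0$; here I would use the explicit evaluation $\int_0^1|1-2t|^{\alpha-1}\d t = \tfrac1\alpha$ together with continuity and compactness on the normalized set $\{|u|+|v|=1\}$ to produce constants depending only on $\alpha$, completing the proof.
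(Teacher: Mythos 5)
The paper does not actually prove this lemma: it is quoted from \cite[Lemma 8.3]{Giusti} (see also \cite[Lemma 3.2]{BDKS20}), so there is no in-paper argument to compare against. Your proposal is essentially the classical proof underlying those references, and it is correct. The representation $\power{v}{\alpha}-\power{u}{\alpha}=\int_0^1 D\phi(w_t)(v-u)\,\dt$ with the spectral bounds $\min(1,\alpha)|w|^{\alpha-1}|\xi|^2\le [D\phi(w)\xi]\cdot\xi\le\max(1,\alpha)|w|^{\alpha-1}|\xi|^2$ and $|D\phi(w)\xi|\le\max(1,\alpha)|w|^{\alpha-1}|\xi|$, combined with testing against $v-u$ and Cauchy--Schwarz, does reduce both inequalities of the lemma to the two-sided comparison $\int_0^1|w_t|^{\alpha-1}\,\dt\simeq(|u|+|v|)^{\alpha-1}$, which is true. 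Two points deserve tightening. First, a bookkeeping remark: the monotonicity/Cauchy--Schwarz step together with the \emph{lower} integral bound yields the \emph{second} inequality of the lemma, while the operator-norm bound together with the \emph{upper} integral bound yields the \emph{first}; your labels ``lower/upper estimate'' are swapped, harmlessly. Second, appealing to ``continuity and compactness on $\{|u|+|v|=1\}$'' for the comparison is only as strong as the verification that the normalized integral is finite and continuous at the nearly antipodal configurations, which is exactly the delicate point; it is cleaner to argue directly after normalizing $|u|+|v|=1$: for $\alpha\ge1$ the upper bound is trivial ($|w_t|\le 1$) and the lower bound follows because, with $|v|\ge\tfrac12$ say, one has $|w_t|\ge\tfrac18$ for $t\in[\tfrac34,1]$; for $0<\alpha<1$ the lower bound is trivial and the upper bound follows from $|w_t|\ge|t-t_*|\,|v-u|$ (with $t_*$ the point of closest approach), which gives $\int_0^1|w_t|^{\alpha-1}\,\dt\le\tfrac{2}{\alpha}|v-u|^{\alpha-1}$, together with the dichotomy that either $|v-u|\ge\tfrac18$ or else $|w_t|\ge\tfrac14$ for all $t$. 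With these details filled in (and the routine justification of the fundamental theorem of calculus across the at most one zero of $w_t$ when $0<\alpha<1$, which you correctly note), the proof is complete and coincides with the standard argument in the cited literature.
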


The preceding result implies the following lemma; see also \cite[Lemma 3.3]{BDKS20}.
\begin{lemma}\label{ineq:power_alpha_estimate}
For any $\alpha > 1$ and $u, v \in \mathds{R}^{N}$, there exists a
constant $C$ depending only on $\alpha$ such that
\begin{align*}
    \vert v - u \vert^{\alpha} \le C \vert \power{v}{\alpha} - \power{u}{\alpha} \vert.
\end{align*}
\end{lemma}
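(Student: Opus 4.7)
The plan is to derive the claimed inequality as a direct consequence of the previous Lemma~\ref{ineq:power_alpha_estimate_q_greater_0}. That lemma supplies, for every $\alpha>0$, a constant $C=C(\alpha)$ with
\begin{align*}
(\vert u\vert+\vert v\vert)^{\alpha-1}\vert v-u\vert\leq C\,\vert \power{v}{\alpha}-\power{u}{\alpha}\vert,
\end{align*}
so the remaining task is to bound $\vert v-u\vert^{\alpha}$ from above by the left-hand side of this estimate. This is precisely where the restriction $\alpha>1$ enters.

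First, I would invoke the triangle inequality $\vert v-u\vert\leq \vert u\vert+\vert v\vert$. Since $\alpha-1\geq 0$, raising to the $(\alpha-1)$-th power preserves the inequality, giving
\begin{align*}
\vert v-u\vert^{\alpha-1}\leq (\vert u\vert+\vert v\vert)^{\alpha-1}.
\end{align*}
Multiplying through by $\vert v-u\vert$, I obtain
\begin{align*}
\vert v-u\vert^{\alpha}=\vert v-u\vert^{\alpha-1}\vert v-u\vert \leq (\vert u\vert+\vert v\vert)^{\alpha-1}\vert v-u\vert.
\end{align*}
Chaining this with the upper bound from Lemma~\ref{ineq:power_alpha_estimate_q_greater_0} yields the desired inequality with the same constant $C=C(\alpha)$.

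There is no genuine obstacle here; the only subtle point is that the monotonicity step $\vert v-u\vert^{\alpha-1}\leq(\vert u\vert+\vert v\vert)^{\alpha-1}$ uses $\alpha\geq 1$, which is exactly the hypothesis of the lemma and explains why the statement fails in the range $\alpha\in(0,1)$ (where the factor $(\vert u\vert+\vert v\vert)^{\alpha-1}$ degenerates as $\vert u\vert+\vert v\vert\to\infty$). No further preparation, mollification, or case distinction is needed.
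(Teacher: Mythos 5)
Your proof is correct and is exactly the argument the paper intends: the paper only remarks that Lemma~\ref{ineq:power_alpha_estimate_q_greater_0} implies the statement, and your chain $\vert v-u\vert^{\alpha}\le(\vert u\vert+\vert v\vert)^{\alpha-1}\vert v-u\vert\le C\vert \power{v}{\alpha}-\power{u}{\alpha}\vert$, using $\alpha\ge1$ for the monotonicity step, is precisely that implication.
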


Moreover, we have the following estimate, see  \cite[Lemma 3.3]{SSSS}.
\begin{lemma} \label{lem:technical_lemma_2}
Let $q \in (0,\infty)$. Then, for any $u,v \in \R^N$ we have the estimate 
\begin{align*}
	\vert u-v \vert^{q+1}
	\leq
	c(q) \Big(\vert u \vert^\frac{q+1}{2} + \vert v \vert^\frac{q+1}{2} \Big) \Big\vert \power{u}{\frac{q+1}{2}} - \power{v}{\frac{q+1}{2}} \Big\vert.
\end{align*}
\end{lemma}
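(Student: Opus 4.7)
The plan is to abbreviate $\alpha := \frac{q+1}{2}$, so that $\alpha > \frac{1}{2}$ because $q>0$, and to reformulate the claim as
\[
|u-v|^{2\alpha} \leq c(\alpha)\bigl(|u|^\alpha + |v|^\alpha\bigr)\bigl|\power{u}{\alpha} - \power{v}{\alpha}\bigr|.
\]
The right-hand side contains only a single factor of $|\power{u}{\alpha}-\power{v}{\alpha}|$, so the natural strategy is to split $|u-v|^{2\alpha} = |u-v|^{2\alpha-1}\cdot |u-v|$, bound the first factor by the triangle inequality, and apply the right inequality in Lemma~\ref{ineq:power_alpha_estimate_q_greater_0} to the second factor, which reads
\[
(|u|+|v|)^{\alpha-1}|u-v| \leq C(\alpha)\bigl|\power{u}{\alpha}-\power{v}{\alpha}\bigr|.
\]

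After handling the trivial case $(u,v)=(0,0)$ separately (where both sides vanish), I would first use that $2\alpha-1 = q\geq 0$, which combined with $|u-v|\leq|u|+|v|$ gives
\[
|u-v|^{2\alpha-1} \leq (|u|+|v|)^{2\alpha-1}.
\]
Multiplying this with the displayed consequence of Lemma~\ref{ineq:power_alpha_estimate_q_greater_0} and dividing the outcome by $(|u|+|v|)^{\alpha-1}>0$ produces
\[
|u-v|^{2\alpha} \leq C(\alpha)\,(|u|+|v|)^{\alpha}\bigl|\power{u}{\alpha}-\power{v}{\alpha}\bigr|.
\]
Finally, the elementary bound $(a+b)^\alpha \leq \max\{1,2^{\alpha-1}\}(a^\alpha+b^\alpha)$ for $a,b\geq 0$ converts $(|u|+|v|)^{\alpha}$ into a constant multiple of $|u|^{\alpha}+|v|^{\alpha}$, and rewriting $\alpha$ in terms of $q$ yields the asserted inequality.

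I do not anticipate a substantive obstacle: the whole argument is an elementary rearrangement of Lemma~\ref{ineq:power_alpha_estimate_q_greater_0} combined with the triangle inequality. The only point requiring minor care is that the argument must be uniform in $q\in(0,\infty)$, i.e.\ work irrespective of whether $\alpha$ lies above or below $1$. This is the reason why I prefer to keep the intermediate bound in the form $(|u|+|v|)^{\alpha}$ until the very last step and only then distribute it by the $\alpha$-subadditivity/convexity inequality, whose constant $\max\{1,2^{\alpha-1}\}$ is finite in both ranges of $\alpha$.
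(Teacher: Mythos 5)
Your proof is correct: splitting $|u-v|^{q+1}=|u-v|^{q}\,|u-v|$, bounding $|u-v|^{q}\le(|u|+|v|)^{q}$, and invoking the right-hand inequality of Lemma~\ref{ineq:power_alpha_estimate_q_greater_0} with $\alpha=\frac{q+1}{2}$ is precisely the standard argument, which is also the route behind the paper's proof (delegated to the companion paper cited for Lemma~\ref{lem:technical_lemma_2}). Your handling of the trivial case $(u,v)=(0,0)$, the division by $(|u|+|v|)^{\alpha-1}$, and the uniformity of the constant for $\alpha$ above or below $1$ are all in order, so nothing needs to be added.
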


The proof of the next estimate for the boundary term $\b[u,v]$ can be found in \cite[Lemma 3.4]{SSSS}.
\begin{lemma}
\label{lem:technical_lemma}
Let $q \in (0,\infty)$ and $u,v \in \R^N$. Then, we have that
\begin{align*}
     \Big| \power{u}{\frac{q+1}{2}} - \power{v}{\frac{q+1}{2}} \Big|^2
	\leq
	c(q) \b[u,v]
	\leq
	c(q) \big( \power{v}{q} - \power{u}{q} \big) \cdot (v-u).
\end{align*}
\end{lemma}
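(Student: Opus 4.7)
The plan is to reinterpret $\b[u,v]$ as the Bregman divergence associated with the convex potential $\Phi(w):=\frac{1}{q+1}|w|^{q+1}$. A direct computation gives $D\Phi(w)=\power{w}{q}$ and $D^{2}\Phi(w)(\xi,\xi)=|w|^{q-1}|\xi|^{2}+(q-1)|w|^{q-3}(w\cdot\xi)^{2}$; splitting $\xi$ into components parallel and perpendicular to $w$ yields the pointwise lower bound $D^{2}\Phi(w)(\xi,\xi)\geq\min(1,q)|w|^{q-1}|\xi|^{2}$, so $\Phi$ is convex on all of $\R^{N}$ for every $q\in(0,\infty)$. In this notation one has $\b[u,v]=\Phi(v)-\Phi(u)-D\Phi(u)\cdot(v-u)$.

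For the right inequality I would apply convexity of $\Phi$ at the point $v$ in the direction $u-v$, i.e.\ $\Phi(u)\geq\Phi(v)+D\Phi(v)\cdot(u-v)$, equivalently $\Phi(v)-\Phi(u)\leq\power{v}{q}\cdot(v-u)$. Subtracting $\power{u}{q}\cdot(v-u)$ from both sides yields
\[\b[u,v]\leq\bigl(\power{v}{q}-\power{u}{q}\bigr)\cdot(v-u),\]
so the second inequality holds (already with constant $1$).

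For the left inequality, Lemma~\ref{ineq:power_alpha_estimate_q_greater_0} applied with $\alpha=(q+1)/2$, followed by squaring, gives
\[\bigl|\power{u}{(q+1)/2}-\power{v}{(q+1)/2}\bigr|^{2}\leq C(|u|+|v|)^{q-1}|v-u|^{2},\]
so it suffices to establish the matching lower bound
\[\b[u,v]\geq c(q)(|u|+|v|)^{q-1}|v-u|^{2}.\]
To this end I would expand $\b[u,v]$ via Taylor's formula as
\[\b[u,v]=\int_{0}^{1}(1-s)\,D^{2}\Phi\bigl(u+s(v-u)\bigr)(v-u,v-u)\,ds,\]
insert the Hessian bound just derived, and reduce the claim to the scale-invariant integral estimate
\[\int_{0}^{1}(1-s)\bigl|u+s(v-u)\bigr|^{q-1}\,ds\geq\tilde c(q)(|u|+|v|)^{q-1}.\]

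This last integral estimate is the main obstacle. After rescaling so that $|u|+|v|=1$, I would split into two regimes: if $|v-u|\leq\tfrac12$, then $|u+s(v-u)|\asymp 1$ uniformly in $s\in[0,1]$ and the bound is immediate; if $|v-u|\geq\tfrac12$, one isolates an explicit subinterval of $[0,1]$ on which $|u+s(v-u)|$ stays between two absolute positive constants, so that the integrand is bounded from below there. For $0<q<1$ the integrand becomes singular on the zero set of $u+s(v-u)$, but since $q-1>-1$ this only helps the lower bound. Combining this with the earlier estimates yields the left inequality, with a constant $c(q)$ that degenerates only as $q\to 0^{+}$ (through the Hessian constant) and as $q\to\infty$ (through the integral).
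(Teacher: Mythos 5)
Your proposal is correct. Note first that the paper does not prove this lemma itself but cites the companion work [SSSS, Lemma 3.4]; the proofs in that line of literature (cf.\ also \cite[Lemma 3.3]{BDKS20}) typically run through the first-order representation $\b[u,v]=\int_0^1\big(\power{(u+s(v-u))}{q}-\power{u}{q}\big)\cdot(v-u)\,\ds$ combined with the quantitative algebraic estimates of Lemma~\ref{ineq:power_alpha_estimate_q_greater_0}, whereas you argue at second order: Bregman-divergence viewpoint, the Hessian lower bound $D^2\Phi(w)(\xi,\xi)\ge\min\{1,q\}\,|w|^{q-1}|\xi|^2$, Taylor's formula with integral remainder, and the scale-invariant bound $\int_0^1(1-s)|u+s(v-u)|^{q-1}\,\ds\ge \tilde c(q)(|u|+|v|)^{q-1}$. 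Both routes are sound and of comparable length; your right-hand inequality via plain convexity (with constant $1$) coincides with the standard argument, and your reduction of the left-hand inequality via Lemma~\ref{ineq:power_alpha_estimate_q_greater_0} with $\alpha=\frac{q+1}{2}$ is exactly the expected use of that lemma. Your case analysis for the integral estimate checks out: after normalizing $|u|+|v|=1$ (the case $u=v=0$ being trivial by homogeneity of degree $q-1$), if $|v-u|\le\frac12$ every point $w$ of the segment satisfies $|w|\ge\frac14$ because $1=|u|+|v|\le 2|w|+|v-u|$, while if $|v-u|\ge\frac12$ one endpoint has norm $\ge\frac12$ and a fixed subinterval near it does the job even against the weight $(1-s)$; for $q<1$ the bound is immediate since $|w|\le1$ along the segment. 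Two small points you should make explicit in a written version: for $0<q<1$ the function $\Phi$ is only $C^1$, so the Taylor remainder identity needs a one-line justification (restrict to the segment; the derivative of $s\mapsto\power{(u+s(v-u))}{q}\cdot(v-u)$ is locally integrable since $q-1>-1$ and the segment meets the origin in at most one point, so the one-dimensional second-order formula holds by absolute continuity), and the Hessian formula is only used off the origin, which is all the integral representation requires. With these remarks added, your argument is a complete and self-contained alternative to the cited proof.
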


Next, we are concerned with the $p$-Hardy inequality.
It holds under the following $p$-fatness condition on the complement of the time slices, which is weaker than the measure density condition~\eqref{ineq:lower_measure_bound}, see e.g. \cite[Example 6.18]{KiLeVa}.
For the definition of the variational $p$-capacity $\ca_p$, we refer to  \cite[Definition 5.32]{KiLeVa}. 

\begin{definition} \label{def:p-fat}
A set $A \subset \R^n$ is called uniformly $p$-fat if there exists a constant $\alpha > 0$ such that
\begin{align}\label{ineq:p-fatness_condition}
\ca_p\big(A \cap \overline{B_\rho(x)}, B_{2\rho}(x)\big) \geq \alpha \ca_p\big(\overline{B_\rho(x)}, B_{2\rho}(x)\big) 
\end{align}
holds true for every $x \in A$ and $\rho > 0$.
\end{definition}

Then, the $p$-Hardy inequality reads as follows, see e.g.~\cite[Theorem 6.25]{KiLeVa}.
\begin{lemma}
\label{lem:p-Hardy}
If $A \subset \R^n$ is an open set whose complement is $p$-fat with fatness constant $\alpha$, then there is a constant $c=c(n,p,\alpha)$ such that the $p$-Hardy inequality
\begin{equation*}
	\int_A \bigg| \frac{u(x)}{\dist(x,\partial\Omega)} \bigg|^p \,\dx
    \leq
	c \int_A |\nabla u(x)|^p \,\dx
\end{equation*}
holds for any $u \in W^{1,p}_0(\Omega)$.
\end{lemma}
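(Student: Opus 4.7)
The plan is to combine a Maz'ya-type capacity--Poincar\'e estimate with the Koskela--Lehrb\"ack self-improvement of $p$-fatness. By density of $C^\infty_c(\Omega)$ in $W^{1,p}_0(\Omega)$, it suffices to prove the inequality for $u \in C^\infty_c(\Omega)$, extended by zero outside $\Omega$.

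First I would derive a pointwise bound. For $x \in \Omega$, set $d := \dist(x,\partial \Omega)$, pick a boundary point $x_0 \in \partial \Omega$ with $|x-x_0|=d$, and let $2B := B_{2d}(x_0)$. Since $u$ vanishes on $\Omega^c$, Maz'ya's capacity Poincar\'e inequality yields
\begin{equation*}
\bint_{2B} |u|^p \, dy \;\le\; \frac{c\, d^p}{\ca_p(\Omega^c \cap \overline{B_{d}(x_0)},\, 2B)} \int_{2B} |\nabla u|^p \, dy,
\end{equation*}
and the $p$-fatness assumption \eqref{ineq:p-fatness_condition}, together with the standard comparison $\ca_p(\overline{B_r(y)},B_{2r}(y)) \simeq r^{n-p}$, bounds the capacity from below by $c\, d^{n-p}$. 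A Lebesgue-differentiation and chaining argument via successive Poincar\'e inequalities then upgrades this averaged estimate to the pointwise bound
\begin{equation*}
|u(x)|^p \;\le\; c\, d(x)^p\, M(|\nabla u|^p)(x),
\end{equation*}
where $M$ is the Hardy--Littlewood maximal operator.

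Integrating this inequality directly over $\Omega$ is not enough, since $M$ fails to be bounded on $L^1$. I would therefore invoke Lewis's self-improvement theorem (or its Koskela--Zhong reformulation), which produces an exponent $q = q(n,p,\alpha) \in (1,p)$ such that $\Omega^c$ is in addition $q$-fat. Applying the previous step at exponent $q$ gives $|u(x)|^q \le c\, d(x)^q M(|\nabla u|^q)(x)$, so after raising to the power $p/q > 1$, dividing by $d(x)^p$, and integrating,
\begin{equation*}
\int_\Omega \bigg|\frac{u(x)}{d(x)}\bigg|^p dx \;\le\; c \int_\Omega \bigl(M(|\nabla u|^q)\bigr)^{p/q} dx \;\le\; c \int_\Omega |\nabla u|^p \, dx,
\end{equation*}
the last inequality being the strong $(p/q,p/q)$-bound for the maximal operator, valid as $p/q > 1$. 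A density argument then extends the bound from $C^\infty_c(\Omega)$ to all of $W^{1,p}_0(\Omega)$.

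The genuinely deep step---and the one I expect to be the main obstacle---is the self-improvement of $p$-fatness to $q$-fatness with some $q<p$. Its proof relies on nonlinear potential theory, in particular reverse H\"older estimates for the gradient of the $p$-capacitary potential, and this is precisely the content that one imports from \cite{KiLeVa} rather than reproves. By contrast, the capacity Poincar\'e inequality and the maximal-function bound are comparatively routine.
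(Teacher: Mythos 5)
The paper does not prove this lemma at all: it is imported directly from \cite[Theorem 6.25]{KiLeVa}, so there is no in-house argument to compare yours against. Your outline is essentially the standard proof found in that reference (going back to Lewis, Mikkonen and Haj\l{}asz): the Maz'ya capacitary Poincar\'e inequality together with uniform fatness gives, after a chaining/telescoping argument, the pointwise bound $|u(x)|\le c\,\dist(x,\partial\Omega)\bigl(M(|\nabla u|^q)(x)\bigr)^{1/q}$; the self-improvement of uniform $p$-fatness supplies an exponent $q=q(n,p,\alpha)<p$; and the strong $(p/q,p/q)$ maximal function estimate closes the integration. So your route coincides with the one the paper cites, and you correctly identify the self-improvement theorem as the genuinely deep ingredient that one imports rather than reproves. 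One slip worth correcting: with your own normalization $\ca_p\big(\overline{B_d(y)},B_{2d}(y)\big)\simeq d^{\,n-p}$, the displayed capacitary Poincar\'e inequality should not carry the extra factor $d^p$ in front of $1/\ca_p$; the correct form is
\begin{equation*}
	\bint_{2B}|u|^p\,\dy
	\le
	\frac{c}{\ca_p\big(\Omega^c\cap\overline{B_d(x_0)},\,2B\big)}\int_{2B}|\nabla u|^p\,\dy,
\end{equation*}
which after the fatness lower bound becomes $\bint_{2B}|u|^p\,\dy\le c\,d^p\bint_{2B}|\nabla u|^p\,\dy$; as written your estimate is off by $d^p$. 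Since the pointwise maximal-function bound you state afterwards is the correct one, this is a cosmetic error and the argument is otherwise sound.
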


\subsection{Mollification in time}
For our analysis it is convenient to work with the regularization with respect to time that was introduced by Landes in \cite{Landes}.
Let $X$ be a separable Banach space, and consider functions $v_o \in X$ and $v \in L^r(0,T;X)$ for some $1 \leq r \leq \infty$.
Later on, we will mostly choose $X=L^{q+1}(\Omega,\R^N)$ and $X=W^{1,p}(\Omega,\R^N)$.
For $h>0$ we construct the mollification $[v]_h$ as a solution to the ordinary differential equation
\begin{equation}
	\partial_t [v]_h
	=
	-\tfrac1h \big( [v]_h - v \big)
	\label{eq:ODE_mollification}
\end{equation}
with initial condition $[v]_h(0) = v_o$, which is given by 
\begin{equation}
	[v]_h(t) :=
	e^{-\frac{t}{h}} v_o + \tfrac1h \int_0^t e^\frac{s-t}{h} v(s) \,\ds
	\label{eq:time_mollification}
\end{equation}
for any $t \in [0,T]$.
In the following lemma, the proofs of which can be found in \cite[Appendix B]{BDM13} and \cite{Naumann}, we state some properties of this mollification.
\begin{lemma}\label{lem:mollification:estimate_and_continuous_convergence}
Suppose that $X$ is a separable Banach space and $v_o \in X$.
If $v \in L^r(0,T;X)$ for some $1 \leq r \leq \infty$, the mollification $[v]_h$ defined by \eqref{eq:time_mollification} satisfies $[v]_h \in L^r(0,T;X)$ and for any $t_o \in (0,T]$ we have the bound
\begin{align*}
	\big\| [v]_h \big\|_{L^r(0,t_o;X)}
	\leq
	\| v \|_{L^r(0,t_o;X)}
	+ \Big[ \tfrac{h}{r} \Big( 1 - e^{-\frac{t_o r}{h}} \Big) \Big]^\frac{1}{r} \| v_o\|_X,
\end{align*}
where the bracket $[\ldots]^\frac{1}{r}$ is interpreted as $1$ in the case $r=\infty$.
Moreover, if $r<\infty$ we have that $[v]_h \to v$ in $L^r(0,T;X)$ in the limit $h \downarrow 0$.
Furthermore, if $v \in C^0([0,T];X)$ and $v_o = v(0)$, then $[v]_h \in C^0([0,T];X)$ with $[v]_h(0) = v(0)$ and moreover, we have that $[v]_h \to v$ in $C^0([0,T];X)$ as $h \downarrow 0$.
\end{lemma}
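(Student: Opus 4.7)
The plan is to decompose the mollification as $[v]_h(t) = e^{-t/h} v_o + K_h v(t)$ with $K_h v(t) := \tfrac{1}{h} \int_0^t e^{(s-t)/h} v(s) \,\ds$, and to handle the ``initial-data part'' and the ``convolution part'' separately throughout. The convolution kernel $k_h(\tau) := \tfrac{1}{h} e^{-\tau/h} \chi_{[0,\infty)}(\tau)$ is a one-sided approximate identity with $\|k_h\|_{L^1(\R)} = 1$, and this single observation drives all three claims.

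For the quantitative bound I would apply the triangle inequality in $L^r(0,t_o;X)$. The initial-data term contributes $\|e^{-\cdot/h}\|_{L^r(0,t_o)} \cdot \|v_o\|_X$, and a direct computation gives $\|e^{-\cdot/h}\|_{L^r(0,t_o)} = \bigl[\tfrac{h}{r}(1 - e^{-t_o r/h})\bigr]^{1/r}$ when $r<\infty$, while it equals $1$ when $r=\infty$ (since the exponential is bounded by $1$ on $[0,t_o]$). For the convolution term I invoke Minkowski's integral inequality (equivalently, Young's convolution inequality after extending $v$ by zero outside $[0,t_o]$) to obtain $\|K_h v\|_{L^r(0,t_o;X)} \leq \|k_h\|_{L^1(\R)} \|v\|_{L^r(0,t_o;X)} = \|v\|_{L^r(0,t_o;X)}$. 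Summing the two contributions yields the stated inequality.

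For the $L^r$-convergence in the case $r<\infty$, the initial-data part vanishes as $h \downarrow 0$ since its $L^r$-norm equals $\|v_o\|_X \bigl[\tfrac{h}{r}(1 - e^{-Tr/h})\bigr]^{1/r} \to 0$. To show $K_h v \to v$ in $L^r(0,T;X)$, I would first handle $\tilde v \in C^0_c((0,T);X)$ via the substitution $\tau = (t-s)/h$, which rewrites $K_h \tilde v(t) = \int_0^{t/h} e^{-\tau} \tilde v(t - h\tau)\,\d\tau$, and then apply dominated convergence using continuity of $\tilde v$ together with $\int_0^\infty e^{-\tau}\,\d\tau = 1$. The general case then follows from the uniform operator bound $\|K_h\|_{L^r \to L^r} \leq 1$ together with the density of $C^0_c((0,T);X)$ in $L^r(0,T;X)$, where the separability of $X$ enters.

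For the $C^0$ claim, continuity of $[v]_h$ on $[0,T]$ is immediate from the explicit formula \eqref{eq:time_mollification}, and $[v]_h(0) = v_o = v(0)$ by direct evaluation. The uniform convergence rests on the pointwise identity
\begin{equation*}
[v]_h(t) - v(t) = e^{-t/h}\bigl(v(0) - v(t)\bigr) + \int_0^{t/h} e^{-\tau}\bigl(v(t - h\tau) - v(t)\bigr) \,\d\tau,
\end{equation*}
which follows from the same substitution together with the kernel-mass identity $e^{-t/h} + \int_0^{t/h} e^{-\tau}\,\d\tau = 1$. Given a modulus of continuity $\delta$ for the uniformly continuous map $v$ on $[0,T]$, splitting the integral at $\tau = \delta/h$ and bounding the tail by $2 \|v\|_{C^0([0,T];X)} e^{-\delta/h}$ makes both terms uniformly small in $t$. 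The main obstacle is no deep obstruction but rather this small bookkeeping step: the kernel-mass identity is what permits the clean comparison of $[v]_h(t)$ with $v(t)$ and organizes all three convergence arguments above.
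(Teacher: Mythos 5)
Your proof is correct, and it follows the standard argument: the paper itself does not prove this lemma but refers to \cite[Appendix B]{BDM13} and \cite{Naumann}, where exactly this strategy is used --- splitting $[v]_h$ into the initial-data term and the convolution with the unit-mass kernel $\tfrac1h e^{-\tau/h}\chi_{[0,\infty)}$, Minkowski/Young for the quantitative bound, a density argument for the $L^r$-convergence, and uniform continuity plus the kernel-mass identity for the $C^0$-statement. (Only a cosmetic remark: the density of $C^0_c((0,T);X)$ in $L^r(0,T;X)$ does not actually require separability of $X$, since Bochner-measurable functions are essentially separably valued; separability is assumed for other measurability conveniences.)
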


For the next convergence result for mollifications see \cite[Lemma 2.8]{Schaetzler}.
\begin{lemma}\label{lem:mollification_weak_convergence}
    Let $r\ge 1$ and $v_o \in L^{r}(\Omega_{T}, \mathds{R}^{N})$.
    Suppose that $(v_{\ell})_{\ell \in \mathds{N}}$ is a sequence in $L^{r}(\Omega_{T}, \mathds{R}^{N})$, such that $v_{\ell} \wto v$ weakly in $L^{r}(\Omega_{T}, \mathds{R}^{N})$ for $\ell \to \infty$, where $v \in L^{r}(\Omega_{T}, \mathds{R}^{N})$.
    Then, for the mollifications $[v_{\ell}]_{h}, [v]_{h}$ defined as in \eqref{eq:time_mollification} for a fixed $h>0$ and initial value $v_o$ it follows that $[v_{\ell}]_{h} \wto [v]_{h}$ weakly in $L^{r}(\Omega_{T}, \mathds{R}^{N})$ as $\ell \to \infty$.
\end{lemma}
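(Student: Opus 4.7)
The plan is to exploit that $[v_\ell]_h$ and $[v]_h$ are built from the \emph{same} initial value $v_o$, so by the linearity of formula \eqref{eq:time_mollification} their difference depends only on $v_\ell - v$. Explicitly,
\[
[v_\ell]_h(x,t) - [v]_h(x,t) = \tfrac{1}{h}\int_0^t e^{(s-t)/h}(v_\ell - v)(x,s)\,\ds.
\]
First I would fix an arbitrary test function $\varphi \in L^{r'}(\Omega_T,\R^N)$, where $r'$ is the Hölder-conjugate to $r$ (with the convention $r'=\infty$ if $r=1$), and apply Fubini's theorem to swap the order of integration in the pairing, yielding
\[
\iint_{\Omega_T}\bigl([v_\ell]_h - [v]_h\bigr)\cdot\varphi\,\dx\,\dt = \iint_{\Omega_T}(v_\ell - v)(x,s)\cdot\psi(x,s)\,\dx\,\ds,
\]
where the adjoint kernel produces the new test function
\[
\psi(x,s) := \tfrac{1}{h}\int_s^T e^{(s-t)/h}\varphi(x,t)\,\dt.
\]

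The key technical step, and the only place where any care is needed, is to verify that $\psi$ lies in $L^{r'}(\Omega_T,\R^N)$. For fixed $s$, the kernel $t\mapsto \tfrac{1}{h}e^{(s-t)/h}\mathbf{1}_{(s,T)}(t)$ is nonnegative with total mass $1-e^{(s-T)/h}\le 1$. When $1\le r'<\infty$, writing this kernel as a subprobability measure and applying Jensen's inequality gives the pointwise bound
\[
|\psi(x,s)|^{r'} \le \tfrac{1}{h}\int_s^T e^{(s-t)/h}|\varphi(x,t)|^{r'}\,\dt,
\]
and a further Fubini swap then leads to $\|\psi\|_{L^{r'}(\Omega_T,\R^N)}\le \|\varphi\|_{L^{r'}(\Omega_T,\R^N)}$. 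The remaining case $r'=\infty$ is immediate from $|\psi(x,s)|\le \|\varphi\|_{L^\infty(\Omega_T,\R^N)}$.

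Thus $\psi$ is an admissible test function for the weak convergence $v_\ell \wto v$ in $L^r(\Omega_T,\R^N)$, which yields
\[
\iint_{\Omega_T}\bigl([v_\ell]_h - [v]_h\bigr)\cdot\varphi\,\dx\,\dt = \iint_{\Omega_T}(v_\ell - v)\cdot\psi\,\dx\,\ds \longrightarrow 0
\]
as $\ell\to\infty$, and since $\varphi$ was arbitrary, this is precisely the claimed weak convergence $[v_\ell]_h \wto [v]_h$ in $L^r(\Omega_T,\R^N)$. I do not expect any serious obstacle: the whole argument is a duality computation. The mild subtlety is that \eqref{eq:time_mollification} is a causal forward-in-time integral, so the Fubini swap turns the exponential kernel into a \emph{backward}-in-time kernel acting on $\varphi$, and the subprobability mass of this adjoint kernel is exactly what allows Jensen's inequality to produce the $L^{r'}$-bound on $\psi$ without any $h$-dependent constant.
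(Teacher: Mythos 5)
Your argument is correct, and it is essentially the intended one: the paper does not prove this lemma itself but cites \cite[Lemma~2.8]{Schaetzler}, where the point is likewise that, for a fixed initial value $v_o$, the difference $[v_\ell]_h-[v]_h$ is the image of $v_\ell-v$ under a bounded linear (causal convolution-type) operator on $L^r(\Omega_T,\R^N)$, which is therefore weak--weak continuous. Your explicit computation of the adjoint kernel $\psi$ together with the bound $\|\psi\|_{L^{r'}(\Omega_T,\R^N)}\le\|\varphi\|_{L^{r'}(\Omega_T,\R^N)}$ (and the separate case $r'=\infty$) is a complete way of carrying this out.
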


\subsection{Density of smooth functions}
In this section, we give sufficient conditions under which $C_{0}^{\infty}(E, \mathds{R}^{N})$ is dense in $V_{q}^{p,0}(E)$.
For the proof of the following statements, we refer to \cite[Section 3.5]{SSSS}; see also \cite[Section 5.3.1]{BDSS18} for the case $q=1$.

First, for $t\in[0,T)$ and $\sigma >0$ we denote the inner parallel set of $E^{t}$ by
\begin{equation}\label{def:parallel-set}
    E^{t,\sigma} := \big\{ x \in E^{t} : \dist \big( x, \partial E^{t} \big) > \sigma \big\},
\end{equation}
and define the parabolic function space
\begin{align*}
    V_\mathrm{cpt}^{p,q}(E):= \Big\{ u \in V_{q}^{p,0}(E) : \exists \sigma > 0 \text{ with }\spt(u(t)) \subset \overline{E^{t,\sigma}} \text{ for a.e.~$t\in[0,T)$} \Big\}.
\end{align*}
Assuming that the speed at which $E$ may shrink, is controlled by \eqref{ineq:one_sided_growth_with_modulus}, but without further requirements on the regularity of the time slices, we obtain the following intermediate result.  
\begin{lemma}\label{intermediate_density_argument}
    Suppose that \eqref{ineq:one_sided_growth_with_modulus} holds with a modulus of continuity $\omega$.
    Then  $C_{0}^{\infty}(E, \mathds{R}^{N})$ is dense in $V_\mathrm{cpt}^{p,q}(E)$ with respect to the norm $\Vert \cdot \Vert_{V_{q}^{p}(E)}$. Furthermore, if $u \in V_\mathrm{cpt}^{p,q}(E)$, then there exist $\sigma > 0$ and an approximating sequence $\phi_{k} \in C_{0}^{\infty}(E, \mathds{R}^{N})$ with $\phi_{k} \to u$ in the norm topology of $V_{q}^{p,0}(E)$ such that $\spt(\phi_{k}(t)) \subset \overline{E^{t,\sigma}}$ for all $k \in \mathds{N}$ and $t \in [0,T)$.
\end{lemma}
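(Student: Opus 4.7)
The plan is to approximate $u\in V_\mathrm{cpt}^{p,q}(E)$ by a mollification of a suitable time-shift of $u$. Let $\sigma>0$ be the parameter from the definition of $V_\mathrm{cpt}^{p,q}(E)$, so that $\spt(u(t))\subset\overline{E^{t,\sigma}}$ for a.e.~$t\in[0,T)$. Extend $u$ by zero outside $E$; by the zero Dirichlet condition encoded in $V^{p,0}(E)$, the extension still belongs to $L^p(\R;W^{1,p}(\R^n,\R^N))\cap L^{q+1}(\R^{n+1},\R^N)$. For parameters $0<\eps<h$ and $0<\delta<T/2$ to be chosen later, let $\eta_\delta\in C^\infty(\R)$ be a cutoff with $\eta_\delta\equiv 1$ on $(-\infty,T-\delta]$, $\eta_\delta\equiv 0$ on $[T-\delta/2,\infty)$, and let $\rho_\eps$ be a standard nonnegative space-time mollifier supported in $B_\eps(0)\times(-\eps,\eps)$. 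Define
\begin{align*}
  \phi_{h,\eps,\delta}(x,t):=\eta_\delta(t)\int_{\R^{n+1}}\!u(y,s-h)\,\rho_\eps(x-y,t-s)\,\dy\ds.
\end{align*}
Then $\phi_{h,\eps,\delta}\in C^\infty(\Omega_T,\R^N)$, and since $u(\cdot,s-h)\equiv0$ for $s<h$, its temporal support is contained in $[h-\eps,T-\delta/2]\subset(0,T)$.

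The key step is the spatial support analysis. Fix $t\in[0,T)$ with $\phi_{h,\eps,\delta}(\cdot,t)\not\equiv 0$, and let $z$ lie in its support. Then there exist $y\in\R^n$ and $s\in(t-\eps,t+\eps)$ with $|z-y|<\eps$ and $y\in E^{s-h,\sigma}$, so in particular $\dist(y,\Omega\setminus E^{s-h})\ge\sigma$. Since $\eps<h$, we have $s-h<t$, and the one-sided growth condition \eqref{ineq:one_sided_growth_with_modulus} yields
\begin{align*}
  \boldsymbol{e}^c(E^{s-h},E^t)\le\omega(t-s+h)\le\omega(h+\eps)\le\omega(2h).
\end{align*}
For any $w\in\partial E^t$, there exists $w'\in\overline{\Omega\setminus E^{s-h}}$ with $|w-w'|\le\omega(2h)$; hence
\begin{align*}
  |z-w|\ge|y-w'|-|y-z|-|w-w'|\ge\sigma-\eps-\omega(2h)=:\sigma'.
\end{align*}
Choosing $h$ so small that $\omega(2h)\le\sigma/4$ and $\eps\le\sigma/4$, we obtain $\sigma'\ge\sigma/2>0$, so $\spt(\phi_{h,\eps,\delta}(\cdot,t))\subset\overline{E^{t,\sigma/2}}$ uniformly in $t$. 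In particular $\phi_{h,\eps,\delta}\in C_0^\infty(E,\R^N)$.

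For the convergence, write $\phi_{h,\eps,\delta}-u=\eta_\delta\bigl((u_h*\rho_\eps)-u_h\bigr)+\eta_\delta(u_h-u)+(\eta_\delta-1)u$, where $u_h(x,s):=u(x,s-h)$. Standard properties of mollifiers give $u_h*\rho_\eps\to u_h$ in $L^p(0,T;W^{1,p}(\Omega,\R^N))\cap L^{q+1}(\Omega_T,\R^N)$ as $\eps\downarrow 0$; continuity of translations yields $u_h\to u$ in the same spaces as $h\downarrow 0$; and $(\eta_\delta-1)u\to 0$ by dominated convergence as $\delta\downarrow 0$. Passing to a diagonal subsequence $(h_k,\eps_k,\delta_k)\to 0$ with $\eps_k<h_k$, $\omega(2h_k)\le\sigma/4$ and $\eps_k\le\sigma/4$ gives the desired sequence $\phi_k:=\phi_{h_k,\eps_k,\delta_k}\in C_0^\infty(E,\R^N)$ converging to $u$ in $V_q^p(E)$ with the uniform support condition.

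The main obstacle is the support analysis in the middle paragraph: it is precisely here that the one-sided nature of~\eqref{ineq:one_sided_growth_with_modulus} enters, forcing us to shift backward in time (rather than forward) so that comparisons between $E^{s-h}$ and $E^t$ can be made with $s-h\le t$. Once this is done, the convergence step is routine via standard mollifier estimates.
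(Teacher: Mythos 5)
Your overall construction---replacing $u(\cdot,t)$ by $u(\cdot,t-h)$, mollifying in space--time at a scale $\eps<h$, and cutting off near $t=T$---is essentially the strategy of the proof the paper refers to (see \cite[Section~3.5]{SSSS}, cf.~\cite[Section~5.3.1]{BDSS18}), and the convergence step is indeed routine. The gap is in the support analysis. For $z\in\spt\big(\phi_{h,\eps,\delta}(\cdot,t)\big)$ you establish only the lower bound $\dist(z,\partial E^t)\ge\sigma-\eps-\omega(2h)$, and from this alone you conclude $z\in\overline{E^{t,\sigma/2}}$ and hence $\phi_{h,\eps,\delta}\in C_0^\infty(E,\R^N)$. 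That conclusion does not follow: by definition $E^{t,\sigma/2}\subset E^t$, so in addition to the distance bound you need the membership $z\in E^t$ (a point can be far from $\partial E^t$ while lying in the interior of $\Omega\setminus E^t$, e.g.\ if $E^t$ has moved away from where $E^{s-h}$ was), and without $z\in E^t$ you cannot place $\spt\phi_{h,\eps,\delta}$ inside $E$, which is exactly what membership in $C_0^\infty(E,\R^N)$ requires. In other words, the estimate of the distance to the lateral boundary is only half of the step; the other half is a second application of \eqref{ineq:one_sided_growth_with_modulus}.

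The repair is short. From $|z-y|<\eps$ and $\dist(y,\Omega\setminus E^{s-h})\ge\sigma$ you get $z\in B_\sigma(y)\subset E^{s-h}\subset\Omega$ and $\dist(z,\Omega\setminus E^{s-h})\ge\sigma-\eps>\omega(2h)\ge\boldsymbol{e}^{c}\big(E^{s-h},E^t\big)$; if $z$ belonged to $\Omega\setminus E^{t}$, the definition of the complementary excess would force $\dist(z,\Omega\setminus E^{s-h})\le\boldsymbol{e}^{c}\big(E^{s-h},E^t\big)$, a contradiction. Hence $z\in E^t$, and combined with your distance estimate, $z\in E^{t,\sigma/2}$ uniformly in $t$; the same reasoning applies to limit points of the non-vanishing set, so indeed $\spt\phi_{h,\eps,\delta}\subset E$. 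A second, smaller inaccuracy: your auxiliary claim that every $w\in\partial E^t$ admits $w'\in\overline{\Omega\setminus E^{s-h}}$ with $|w-w'|\le\omega(2h)$ is justified only when $w\in\Omega$ (then $w\in\Omega\setminus E^t$ and the excess applies); for $w\in\partial E^t\cap\partial\Omega$ argue directly that $|z-w|\ge|y-w|-\eps\ge\sigma-\eps$, since $B_\sigma(y)\subset E^{s-h}\subset\Omega$. With these two additions your argument is complete and coincides with the paper's route.
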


In the remaining part of the section, we assume that
\begin{equation}
  \label{p-fat}
  \mbox{for every $t\in(0,T)$, $\R^n\setminus E^t$ is uniformly $p$-fat with a parameter $\alpha>0$}
\end{equation}
in the sense of Definition~\ref{def:p-fat}.
Note that the measure density condition \eqref{ineq:lower_measure_bound} implies~\eqref{p-fat} with some $\alpha=\alpha(n,p,\delta)$.
Moreover, we define the subspaces
\begin{equation*}
    \mathcal{V}^{p,0}(E) :=
    \Big\{u\in V^{p,0}(E) :
    u(t) \in W^{1,p}_0(E^t,\R^N) \mbox{ for a.e.~$t\in[0,T)$} \Big\}
\end{equation*}
and
\begin{equation*}
    \mathcal{V}^{p,0}_q(E) :=
    \Big\{u\in V^{p,0}_q(E) : u(t) \in W^{1,p}_0(E^t,\R^N) \mbox{ for a.e.~$t\in[0,T)$} \Big\}.
\end{equation*}

\begin{remark}\label{rem:V=V}
If the measure density condition~\eqref{ineq:lower_measure_bound} holds, we have that $\mathcal{V}^{p,0}(E)=V^{p,0}(E)$, and thus that $\mathcal{V}^{p,0}_q(E)=V^{p,0}_q(E)$, cf.~\cite[Lemma~3.1]{BDSS18}. 
\end{remark}

For a cut-off function $\widetilde{\eta} \in C^{0,1}(\mathds{R})$ such that $\widetilde{\eta} \equiv 0$ in $(-\infty,1]$, $\widetilde{\eta}(r):=r-1$ for $r \in (1,2)$ and $\widetilde{\eta} \equiv 1$ in $[2, \infty)$, and $\sigma >0$ we define 
\begin{align}\label{definition_eta_sigma}
    \eta_{\sigma}(x,t):= \widetilde{\eta}\bigg( \frac{\dist(x,\Omega \setminus E^t)}{\sigma} \bigg) \quad \text{for }(x,t) \in \Omega_{T}.
\end{align}
In this setting, we have the following convergence result.
\begin{lemma}\label{weak_convergence_curoff_density_part}
    Let $E$ satisfy the $p$-fatness condition~\eqref{p-fat} with a parameter $\alpha>0$, consider $u \in \mathcal{V}^{p,0}_q(E)$, and let $\eta_\sigma$ denote the cut-off function in \eqref{definition_eta_sigma}.
    Then, we have that
    \begin{align}
        \eta_{\sigma} u \wto u \text{ weakly in } \mathcal{V}_{q}^{p,0}(E)\text{ as }\sigma \downarrow 0.
    \end{align}
\end{lemma}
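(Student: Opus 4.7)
The plan is first to verify that $\eta_\sigma u \in \mathcal{V}_q^{p,0}(E)$, then to establish a uniform bound $\|\eta_\sigma u\|_{V^p_q(E)} \leq c\,\|u\|_{V^p_q(E)}$ independent of $\sigma>0$ by means of the $p$-Hardy inequality, and finally to promote the pointwise convergence $\eta_\sigma\to 1$ on $E$ to the claimed weak convergence via reflexivity of $L^p$ and $L^{q+1}$.

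For the first step I would note that $\eta_\sigma$ is spatially Lipschitz with $|D\eta_\sigma|\leq 1/\sigma$, takes values in $[0,1]$, and vanishes on the neighbourhood $\{\dist(\cdot,\Omega\setminus E^t)\leq\sigma\}$ of $\Omega\setminus E^t$ within $\Omega$. Since $u(t)\in W_0^{1,p}(E^t,\R^N)$ for a.e.~$t$, the product $\eta_\sigma u$ also lies in $W_0^{1,p}(E^t,\R^N)$ for a.e.~$t$, and its $L^{q+1}$- and $L^p$-norms are controlled by those of $u$ since $0\leq\eta_\sigma\leq 1$. For the gradient I use the product rule
\begin{align*}
    D(\eta_\sigma u) = \eta_\sigma\,Du + u\otimes D\eta_\sigma,
\end{align*}
where the first summand is bounded trivially by $\|Du\|_{L^p(\Omega_T)}$. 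For the second, I exploit that $E^t$ is open, so $\partial E^t\cap\Omega\subset\Omega\setminus E^t$; a short line segment argument then shows $\dist(x,\partial E^t)\leq\dist(x,\Omega\setminus E^t)$ for every $x\in E^t$. On the support of $D\eta_\sigma$ this yields $\dist(x,\partial E^t)<2\sigma$ and hence $|u\otimes D\eta_\sigma|\leq 2|u|/\dist(x,\partial E^t)$, so that the $p$-Hardy inequality from Lemma~\ref{lem:p-Hardy}, applied slice-wise on $E^t$ thanks to the $p$-fatness hypothesis~\eqref{p-fat}, gives
\begin{align*}
    \|u\otimes D\eta_\sigma\|_{L^p(\Omega_T,\R^{Nn})} \leq c\,\|Du\|_{L^p(\Omega_T,\R^{Nn})}
\end{align*}
with a constant $c=c(n,p,\alpha)$ independent of $\sigma$.

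Next, for every $(x,t)\in E$ the slice $E^t$ is open, so $\dist(x,\Omega\setminus E^t)>0$, and consequently $\eta_\sigma(x,t)=1$ and $D\eta_\sigma(x,t)=0$ for all sufficiently small $\sigma$; outside $E$, both $u$ and $\eta_\sigma u$ vanish. Lebesgue's dominated convergence theorem, with majorants $|u|$, $|u|^{q+1}$ respectively $|Du|^p$, therefore yields the strong convergences $\eta_\sigma u\to u$ in $L^p(\Omega_T,\R^N)\cap L^{q+1}(\Omega_T,\R^N)$ and $\eta_\sigma\,Du\to Du$ in $L^p(\Omega_T,\R^{Nn})$. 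The remaining cross term $u\otimes D\eta_\sigma$ converges to zero almost everywhere and is bounded in the reflexive space $L^p(\Omega_T,\R^{Nn})$, and a standard argument combining weak precompactness with Egorov's theorem and the absolute continuity of the $L^{p'}$-integral then shows $u\otimes D\eta_\sigma\wto 0$ weakly in $L^p$. Putting the pieces together gives $D(\eta_\sigma u)\wto Du$ weakly in $L^p(\Omega_T,\R^{Nn})$, which together with the strong $L^{q+1}$-convergence of $\eta_\sigma u$ itself yields the assertion $\eta_\sigma u\wto u$ in $\mathcal{V}_q^{p,0}(E)$.

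The main obstacle is the uniform $L^p$-bound on the cross term $u\otimes D\eta_\sigma$: one has to absorb the singular factor $1/\sigma$ in $|D\eta_\sigma|$ by the decay of $u$ near $\partial E^t$, and this absorption requires both the comparison between $\dist(\cdot,\Omega\setminus E^t)$ and $\dist(\cdot,\partial E^t)$ and the $p$-fatness assumption~\eqref{p-fat} that makes the Hardy inequality available slice-wise. Everything else reduces to elementary properties of the cutoff and standard dominated-convergence arguments.
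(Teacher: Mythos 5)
Your proof is correct, and it is essentially the argument the paper relies on: the paper itself only cites \cite[Section 3.5]{SSSS} (and \cite[Section 5.3.1]{BDSS18} for $q=1$) for this lemma, and the proof there follows the same route — membership of $\eta_\sigma u$ in $\mathcal{V}^{p,0}_q(E)$, a $\sigma$-uniform gradient bound obtained by estimating $|D\eta_\sigma|\le 2/\dist(x,\partial E^t)$ on its support and applying the slice-wise $p$-Hardy inequality of Lemma~\ref{lem:p-Hardy} under \eqref{p-fat}, and then pointwise convergence plus reflexivity to identify the weak limit. No gaps worth flagging.
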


Furthermore, the following density result holds.
\begin{proposition}\label{prop:C_0_inf_is_dense_in_curly_V_q_p,0}
    Let $E$ satisfy~\eqref{p-fat} and the one-sided growth condition \eqref{ineq:one_sided_growth_with_modulus}.
    Then $C_{0}^{\infty}(E,\mathds{R}^{N})$ is dense in $\mathcal{V}^{p,0}_q(E)\subset V^{p,0}_q(E)$ with respect to the norm topology in $V^{p,0}_q(E)$.
\end{proposition}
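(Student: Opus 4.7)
The strategy is to combine the approximation result of Lemma~\ref{intermediate_density_argument} with the cutoff construction of Lemma~\ref{weak_convergence_curoff_density_part}, upgrading the weak convergence given by the latter to strong convergence via Mazur's lemma.

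Fix $u \in \mathcal{V}^{p,0}_q(E)$. The first step is to observe that the cut-off function $\eta_\sigma$ from \eqref{definition_eta_sigma} places $\eta_\sigma u$ inside the compactly supported class $V^{p,q}_{\mathrm{cpt}}(E)$, because the definition of $\widetilde{\eta}$ immediately gives
\[
  \spt\bigl( (\eta_\sigma u)(\cdot,t) \bigr) \subset \{ x \in \Omega : \dist(x, \Omega \setminus E^t) \geq \sigma \} \subset \overline{E^{t,\sigma}}
\]
for a.e.~$t \in [0,T)$. By Lemma~\ref{weak_convergence_curoff_density_part}, we have $\eta_\sigma u \wto u$ weakly in $\mathcal{V}^{p,0}_q(E)$, and hence weakly in $V^{p,0}_q(E)$, as $\sigma \downarrow 0$.

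Next I would choose a sequence $\sigma_j \downarrow 0$ and apply Mazur's lemma in the Banach space $V^{p,0}_q(E)$ to the sequence $(\eta_{\sigma_j} u)_{j \in \N}$. This yields finite convex combinations
\[
  v_k := \sum_{j=k}^{N_k} \lambda_{k,j}\, \eta_{\sigma_j} u
  \qquad\text{with } \lambda_{k,j} \geq 0,\ \sum_{j=k}^{N_k} \lambda_{k,j} = 1,
\]
such that $v_k \to u$ strongly in $V^{p,0}_q(E)$. The key point is that, because each $v_k$ is a \emph{finite} sum, its support at any time $t\in[0,T)$ is contained in $\overline{E^{t,\widetilde{\sigma}_k}}$ with $\widetilde{\sigma}_k := \min_{k \leq j \leq N_k} \sigma_j > 0$, so $v_k \in V^{p,q}_{\mathrm{cpt}}(E)$. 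Finally, I would invoke Lemma~\ref{intermediate_density_argument} to pick, for each $k \in \N$, a function $\phi_k \in C_0^\infty(E, \R^N)$ with $\| v_k - \phi_k \|_{V^p_q(E)} \leq 1/k$. A standard triangle-inequality and diagonal argument then gives $\phi_k \to u$ in $V^{p,0}_q(E)$, which concludes the proof.

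The main obstacle is the weak-to-strong upgrade: the cutoff approximation provided by Lemma~\ref{weak_convergence_curoff_density_part} is only weak, and Mazur's lemma is the natural tool to obtain strong convergence while preserving the crucial compact-support property, which survives precisely because the convex combinations are finite. As an alternative to Mazur, one could try to prove $\eta_\sigma u \to u$ strongly in $V^{p,0}_q(E)$ directly by dominated convergence, controlling the delicate term $|u \otimes D\eta_\sigma|^p \leq C \sigma^{-p}|u|^p \mathbf{1}_{\{\dist(\cdot,\partial E^t) < 2\sigma\}}$ through the $p$-Hardy inequality of Lemma~\ref{lem:p-Hardy} (which applies thanks to~\eqref{p-fat}) and the integrable dominant $|u|^p/\dist(\cdot,\partial E^t)^p$; however, the Mazur route avoids this technicality and fits the stated lemmas more cleanly.
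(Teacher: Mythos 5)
Your proposal is correct and follows essentially the same route as the proof the paper defers to (\cite[Section 3.5]{SSSS}, resp.~\cite[Section 5.3.1]{BDSS18} for $q=1$): the cut-off $\eta_\sigma u$ lies in $V^{p,q}_{\mathrm{cpt}}(E)$, Lemma~\ref{weak_convergence_curoff_density_part} gives weak convergence, Lemma~\ref{intermediate_density_argument} gives smooth approximation of compactly supported maps, and the weak-to-strong upgrade is exactly the Mazur/convexity step used there (the reference phrases it as ``the weak and strong closures of the convex set $C_0^\infty(E,\R^N)$ coincide'', while you apply Mazur to the sequence $\eta_{\sigma_j}u$ before smoothing, which is the same idea with the steps interchanged).
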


Under the stronger measure density condition \eqref{ineq:lower_measure_bound}, Proposition \ref{prop:C_0_inf_is_dense_in_curly_V_q_p,0} in combination with Remark \ref{rem:V=V} implies the following corollary. 
\begin{corollary}\label{cor:C_0_inf_is_dense_in_V_q_p,0}
    Suppose that $E$ fulfills the measure density condition \eqref{ineq:lower_measure_bound} and the one-sided growth condition \eqref{ineq:one_sided_growth_with_modulus}.
    Then $C_{0}^{\infty}(E,\mathds{R}^{N})$ is dense in $V^{p,0}_q(E)$ with respect to the norm topology in $V^{p,0}_q(E)$.  
\end{corollary}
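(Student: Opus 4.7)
The statement follows essentially by combining the two ingredients explicitly prepared just before it, so my plan is more about verifying that the hypotheses of those ingredients are met than about doing new work.

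First, I would observe that the measure density condition \eqref{ineq:lower_measure_bound} implies the uniform $p$-fatness hypothesis \eqref{p-fat}. This is the standard fact that a set whose complement satisfies a lower Lebesgue density bound with parameter $\delta$ is uniformly $p$-fat with a parameter $\alpha=\alpha(n,p,\delta)$, via the variational characterization of $\ca_p$ on balls; the excerpt explicitly cites this (see the paragraph around Definition~\ref{def:p-fat} and \cite[Example~6.18]{KiLeVa}). So all time slices of $\R^n \setminus E^t$ are uniformly $p$-fat with a common constant $\alpha>0$.

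With \eqref{p-fat} in hand, and since we have also assumed the one-sided growth condition \eqref{ineq:one_sided_growth_with_modulus}, Proposition~\ref{prop:C_0_inf_is_dense_in_curly_V_q_p,0} immediately gives the density of $C_0^\infty(E,\R^N)$ in $\mathcal{V}^{p,0}_q(E)$ with respect to the $V^{p,0}_q(E)$-norm. What remains is merely to upgrade the ambient space from $\mathcal{V}^{p,0}_q(E)$ to $V^{p,0}_q(E)$. But Remark~\ref{rem:V=V} precisely says that under the measure density condition \eqref{ineq:lower_measure_bound} one has the identification
\begin{align*}
\mathcal{V}^{p,0}(E)=V^{p,0}(E),\qquad \mathcal{V}^{p,0}_q(E)=V^{p,0}_q(E),
\end{align*}
as sets equipped with the same norm (the argument being that $p$-fatness of $\R^n\setminus E^t$ guarantees that every time slice $u(t)$ vanishing a.e.\ outside $E^t$ automatically lies in $W^{1,p}_0(E^t,\R^N)$).

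Combining these two steps finishes the proof: any $u\in V^{p,0}_q(E)$ is by Remark~\ref{rem:V=V} an element of $\mathcal{V}^{p,0}_q(E)$, and by Proposition~\ref{prop:C_0_inf_is_dense_in_curly_V_q_p,0} it can be approximated in the $V^{p,0}_q(E)$-norm by a sequence in $C_0^\infty(E,\R^N)$. No obstacle is expected here, since the genuine technical work, namely the cut-off construction using $\eta_\sigma$ together with Lemma~\ref{intermediate_density_argument} to smooth the resulting compactly supported function in space-time, has already been carried out in the proof of Proposition~\ref{prop:C_0_inf_is_dense_in_curly_V_q_p,0}. The only point one might wish to emphasize is the verification that the $p$-fatness constant in \eqref{p-fat} can indeed be chosen uniformly in $t$, which is immediate because the measure density constant $\delta$ in \eqref{ineq:lower_measure_bound} is itself $t$-independent.
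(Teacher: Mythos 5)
Your proposal is correct and follows exactly the paper's route: the measure density condition gives uniform $p$-fatness, Proposition~\ref{prop:C_0_inf_is_dense_in_curly_V_q_p,0} (with \eqref{ineq:one_sided_growth_with_modulus}) yields density in $\mathcal{V}^{p,0}_q(E)$, and Remark~\ref{rem:V=V} identifies $\mathcal{V}^{p,0}_q(E)=V^{p,0}_q(E)$. Nothing is missing.
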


\begin{remark}\label{rem:C_0_inf_is_dense_in_V_p,0}
    Moreover, by applying  Corollary~\ref{cor:C_0_inf_is_dense_in_V_q_p,0} with $q=p-1$ it follows that $C_{0}^{\infty}(E,\mathds{R}^{N})$ is dense in $V^{p,0}(E)$ with respect to the norm topology in $V^{p,0}(E)$.
\end{remark}

\section{Proof of Theorem \ref{thm:existence_in_general_domains}}
\label{sec:existence-proof}
In this section, we prove Theorem \ref{thm:existence_in_general_domains}. The proof is divided into several steps.

\subsection{Constructing the sequence of approximating solutions}
The construction of the sequence of approximating solutions follows
the same strategy as in \cite[Section 5.1.1]{BDSS18}. Let $\ell \in
\mathds{N}$ and define $h_{\ell}:= \tfrac{T}{\ell}$, as well as
$t_{\ell, i}:=ih_\ell$ for $i\in\{0,\ldots,\ell\}$ and $I_{\ell, i} := [t_{\ell, i-1}, t_{\ell,i})$, for all $i \in \{1,\ldots,\ell\}$. We introduce open sets
\begin{align*}
    E_{\ell, i}:= \bigcup_{t \in I_{\ell, i}} E^t \subset \mathds{R}^{n}
\end{align*}
and the corresponding cylinders
\begin{align*}
    Q_{\ell, i}:=E_{\ell, i} \times I_{\ell, i}.
\end{align*}
This construction yields the covering
\begin{align}\label{cylinders_superset_general_domain}
    E^{(\ell)}
    :=
    \bigcup_{i=1}^{\ell} Q_{\ell,i}\supset E. 
\end{align}
We now want to construct a sequence of variational solutions on the individual cylindrical domains. This is done by the following procedure:
For $i \in \{1,\ldots,\ell\}$, by~\cite[Theorem~2.3]{SSSS} there exists a variational solution
\begin{align*}
    u_{\ell,i} \in C^{0} \big(\overline{I_{\ell,i}}; L^{q+1}(E_{\ell,i}, \mathds{R}^{N}) \big) \cap L^{p} \big(I_{\ell, i}; W_{u_{\ast}}^{1,p}(E_{\ell,i}, \mathds{R}^{N}) \big)
\end{align*}
in the cylindrical domain $Q_{\ell,i}$, where the initial data are chosen as $u_o$ for $i=1$ and 
\begin{align}\label{initial_condition_u_l,i_generarl_domain}
    u_{\ell,i}^{(0)} := u_{\ell, i-1}(t_{\ell,i-1}) \chi_{E_{\ell,i}}
    + u_\ast \chi_{\Omega \setminus E_{\ell,i}}
    \in L^{q+1}(\Omega, \mathds{R}^{N})
\end{align}
for $i \in \{2,...,\ell\}$.
Here, $u_{\ell,i}$, $i \in \{1,...,\ell\}$, have been extended to $(\Omega \setminus E_{\ell,i}) \times \overline{I_{\ell,i}}$ by $u_{\ast}$.
Next, we merge these functions to a single map $u_{\ell} \colon \Omega_{T} \to \mathds{R}^{N}$ defined by
\begin{align}\label{definition_of_approximate_solution(general_domain)}
    u_{\ell}(x,t):=u_{\ell,i}(x,t) \quad \text{ for }(x,t) \in \Omega \times I_{\ell,i} \text{ with } i \in \{1,...,\ell\}.
\end{align}

\subsection{Energy estimates and weak convergence}
Since the functions $u_{\ell, i}$, $i=1,\ldots,\ell$, are variational solutions, the inequalities
\begin{align}\label{ineq:var_ineq_seq_var_sol_u_li}
    \iint_{\Omega \times [t_{\ell,i-1}, \tau_i)}& f(x,u_{\ell,i}, Du_{\ell,i}) \,\dx\dt \nonumber 
    \\ \le& 
    \iint_{\Omega \times [t_{\ell,i-1}, \tau_i)} f(x,v, Dv) \,\dx\dt 
    + \iint_{\Omega \times [t_{\ell,i-1}, \tau_i)} \partial_t v \cdot \big(\power{v}{q}-\power{u_{\ell,i}}{q} \big) \,\dx\dt \nonumber \\
    &-\int_{\Omega } \b[u_{\ell,i}(\tau_i),v(\tau_i)] \,\dx  
    + \int_{\Omega } \b\Big[u_{\ell,i}^{(0)},v(t_{\ell,i-1}) \Big] \,\dx 
\end{align}
hold for any $\tau_i \in \overline{I_{\ell,i}}$ and any comparison map $v \in L^{p}(I_{\ell, i}; W^{1,p}(\Omega, \mathds{R}^{N}))$ with $\partial_t v \in L^{q+1}(Q_{\ell,i}, \mathds{R}^{N})$, $v(t_{\ell,i-1}) \in L^{q+1}(E_{\ell,i}, \R^{N})$ (we extend $v(t_{\ell,i-1})$ to $\Omega \setminus E_{\ell,i}$ by $u_\ast$) and $v \equiv u_{\ast}$ a.e.~in $(\Omega \setminus E_{\ell,i}) \times I_{\ell,i}$.
Here, we particularly stress that we are allowed to write any instead of a.e.~$\tau_i \in I_{\ell,i}$, since the maps $u_{\ell, i}$ are continuous in time by~\cite[Proposition~3.16]{SSSS} and we may assume that $u \equiv u_\ast$ a.e.~in $\Omega \setminus E^t$ for any time $t \in (0,T)$.
For the special choice of $v \equiv u_{\ast}$ as the comparison map in \eqref{ineq:var_ineq_seq_var_sol_u_li} we deduce the inequality
\begin{align*}
    &\iint_{\Omega \times [t_{\ell,i-1}, \tau_i)} f(x,u_{\ell,i}, Du_{\ell,i}) \,\dx\dt     
    \\&\le
    \iint_{\Omega \times [t_{\ell,i-1}, \tau_i)} f(x,u_{\ast}, Du_{\ast}) \,\dx\dt
   -\int_{\Omega } \b[u_{\ell,i}(\tau_i),u_{\ast}] \,\dx  
    + \int_{\Omega } \b \Big[ u_{\ell,i}^{(0)},u_{\ast} \Big] \,\dx.
\end{align*}
Note that the term including the time derivative vanished, because $u_\ast$ is independent of time. 
Next, we establish an analogous inequality for $u_\ell$.
To this end, for $\tau \in [0,T]$ let $m \leq \ell$ such that $\tau \in I_{\ell,m}$, and add up the inequalities for $u_{\ell,i}$, $i=1, \ldots,m$, where $\tau_i = t_{\ell,i}$ for $i=1,\ldots,m-1$ and $\tau_m=\tau$.
This leads us to
\begin{align}
    &\iint_{\Omega_{\tau}} f(x,u_{\ell}, Du_{\ell}) \,\dx\dt
    \nonumber \\&\le
    \iint_{\Omega_{\tau}} f(x,u_{\ast}, Du_{\ast}) \,\dx\dt
    + \int_{\Omega} \sum_{i=1}^m \Big( \b \Big[ u_{\ell,i}^{(0)},u_\ast \Big] - \b[u_{\ell,i}(\tau_{i}),u_\ast]\Big) \,\dx.
    \label{ineq:var_ineq_substituted_u_ast_and_summed_up}
\end{align}
The sum on the right-hand side can be rewritten as
\begin{align}\label{eq:b_sum_split_up}
  \sum_{i=1}^m &\Big( \b \Big[ u_{\ell,i}^{(0)},u_\ast \Big] - \b[u_{\ell,i}(\tau_{i}),u_\ast]\Big) \nonumber \\
  &=
    \b \Big[ u_{\ell,1}^{(0)},u_\ast \Big] - \b[u_{\ell,m}(\tau),u_\ast]
    + \sum_{i=1}^{m-1} \Big( \b \Big[ u_{\ell,i+1}^{(0)},u_\ast \Big] - \b[u_{\ell,i}(t_{\ell,i}),u_\ast]\Big).
\end{align}
Note that by definition of $u_{\ell,i+1}^{(0)}$, the term in the last sum is
zero in $E_{\ell,i+1}$, while in $\Omega\setminus E_{\ell,i+1}$, it
equals $-\b[u_{\ell,i}(t_{\ell,i}),u_\ast] \le 0$. Hence, in any case
the sum on the right-hand side of \eqref{eq:b_sum_split_up} is $\le 0$
and can be omitted in \eqref{ineq:var_ineq_substituted_u_ast_and_summed_up}. Moreover, the
first two terms of \eqref{eq:b_sum_split_up} can be estimated in the
following way. On the set $\Omega\setminus E_{\ell,1}$, they equal $-
\b[u_{\ell,m}(\tau),u_\ast]\le0$, while on the set $E_{\ell,1}$, they can be
rewritten to 
\begin{align*}
    \b[u_o&,u_\ast]- \b[u_{\ell,m}(\tau),u_\ast] \\
    &= \tfrac{q}{q+1} \vert u_o \vert^{q+1} - \tfrac{q}{q+1} \vert u_{\ell,m}(\tau) \vert^{q+1} + \big( \power{(u_{\ell,m}(\tau))}{q} - \power{u_o}{q} \big) \cdot u_\ast.
\end{align*}
Applying the Cauchy-Schwarz inequality and Young's
inequality to the products in the last term on the right-hand side, we
obtain the estimates 
\begin{align*}
	\big| \power{u_o}{q} \cdot u_\ast \big|
	\leq
	|u_o|^q |u_\ast|
	\leq
	\tfrac{q}{q+1} |u_o|^{q+1} + \tfrac{1}{q+1} |u_\ast|^{q+1},
\end{align*}
and
\begin{align*}
    \vert \power{(u_{\ell,m}(\tau))}{q} \cdot u_{\ast} \vert \le \vert u_{\ell,m}(\tau) \vert^{q} \cdot \vert u_{\ast} \vert \le \tfrac{q}{2(q+1)} |u_{\ell,m}(\tau)|^{q+1} + \tfrac{2^{q}}{q+1} |u_{\ast}|^{q+1}.
\end{align*}
Since $\tau\in I_{\ell,m}$, we have $u_{\ell,m}(\tau)=u_\ell(\tau)$.
Joining the preceding estimates with~\eqref{ineq:var_ineq_substituted_u_ast_and_summed_up}, and using the coercivity and $p$-growth assumption~\eqref{eq:integrand}$_{3}$ on $f$, we deduce that
\begin{align}
    &\nu \iint_{\Omega_{\tau}} \vert Du_{\ell} \vert^{p} \,\dx\dt
	+ \tfrac{q}{2(q+1)} \int_\Omega |u_\ell(\tau)|^{q+1} \,\dx
    \nonumber \\ &\leq
	\iint_{\Omega_{\tau}} f(x,u_{\ell}, Du_{\ell}) \,\dx\dt
	+ \tfrac{q}{2(q+1)} \int_\Omega |u_\ell(\tau)|^{q+1} \,\dx 
    \nonumber \\
    &\leq
	\iint_{\Omega_{\tau}} f(x,u_{\ast}, Du_{\ast}) \,\dx\dt
	+ \tfrac{2q}{q+1} \int_\Omega |u_o|^{q+1} \,\dx
    +  \tfrac{2^{q}+1}{q+1} \int_\Omega \vert u_{\ast} \vert^{q+1} \,\dx
    \nonumber \\ &\leq
    \tau L \int_\Omega \vert Du_{\ast} \vert^{p} + \vert u_{\ast} \vert^{p} + G(x) \,\dx
	+ c(q) \int_\Omega |u_o|^{q+1} \,\dx
    +  c(q) \int_\Omega \vert u_{\ast} \vert^{q+1} \,\dx.
    \label{ineq:pre_energy_estimate}
\end{align}
Note that both terms on the left-hand side of~\eqref{ineq:pre_energy_estimate} are non-negative.
On the one hand, omitting the second one, while choosing $\tau=T$, we obtain that
\begin{align}\nonumber
     \iint_{\Omega_{T}} \vert Du_{\ell} \vert^{p} \,\dx\dt
    &\le
    c(\nu,L,T) \int_\Omega \vert Du_{\ast} \vert^{p} + \vert u_{\ast} \vert^{p} + G \,\dx\\ &\phantom{=}
	+ c(q,\nu) \int_\Omega |u_o|^{q+1} \,\dx
    \nonumber 
    + c(q,\nu) \int_\Omega \vert u_{\ast} \vert^{q+1} \,\dx\\
    &=:C_1^p.
\label{ineq:L^p_estimate_for_derivative}
\end{align}
On the other hand, omitting the first term on the left-hand side
of~\eqref{ineq:pre_energy_estimate} and taking the supremum over $\tau \in [0,T)$, we have that
\begin{align}
     \sup_{\tau \in[0,T)} \int_{\Omega} \vert u_{\ell}(\tau) \vert^{q+1}  \,\dx\dt
    &\le  
     c(q,L,T) \int_\Omega \vert Du_{\ast} \vert^{p} + \vert u_{\ast} \vert^{p} + G \,\dx\nonumber \\
    &\phantom{=}
	+ 4 \int_\Omega |u_o|^{q+1} \,\dx + c(q) \int_\Omega \vert u_{\ast} \vert^{q+1} \,\dx\nonumber\\
    &=:C_2^{q+1}.
    \label{ineq:L^q+1_estimate_for_u_l}
\end{align}
Combining \eqref{ineq:L^p_estimate_for_derivative} and \eqref{ineq:L^q+1_estimate_for_u_l}, we conclude that the sequence $(u_{\ell})_{\ell \in \mathds{N}}$ is uniformly bounded in $L^{\infty}(0,T;L^{q+1}(\Omega, \mathds{R}^{N})) \cap L^{p}(0,T;W_{u_{\ast}}^{1,p}(\Omega, \mathds{R}^{N}))$. Consequently, there exists a subsequence, denoted by $\mathfrak{K} \subset \mathds{N}$, and a corresponding limit map
\begin{align}\label{limit_map_general_domains}
    u \in L^{\infty}(0,T;L^{q+1}(\Omega, \mathds{R}^{N})) \cap L^{p}(0,T;W_{u_{\ast}}^{1,p}(\Omega, \mathds{R}^{N}))
\end{align}
such that
\begin{equation}
	\left\{
	\begin{array}{ll}
		u_\ell \wsto u
		&\text{weakly$^\ast$ in $L^\infty(0,T;L^{q+1}(\Omega,\R^N))$ as $\mathfrak{K} \ni \ell \to \infty$,} \\[5pt]
		u_\ell \wto u
		&\text{weakly in $L^p(0,T;W_{u_{\ast}}^{1,p}(\Omega,\R^N))$ as $\mathfrak{K} \ni \ell \to \infty$.}
	\end{array}
	\right.
	\label{weak_convergence_to_limit_map_general_domains}
\end{equation}

\subsection{Boundary values}
In this section, we show that the limit map in~\eqref{limit_map_general_domains} admits the correct boundary values in the sense that $u \equiv u_{\ast}$ a.e.~in $\Omega_{T}\setminus \overline{E}$, and consequently that $u \in V_{q}^{p}(E)$. We proceed as in \cite[Section 5.1.3]{BDSS18}.
By construction, we know that $u_\ell \equiv u_{\ast}$ a.e.~in $\Omega_{T} \setminus E^{(\ell)}$ and $E \subset E^{(\ell)}$, where $E^{(\ell)}$ is the set defined in \eqref{cylinders_superset_general_domain}.
Consider an arbitrary, but fixed point $z_{0}:= (x_{0},t_{0}) \in \Omega_T \setminus \overline{E}$. 
Again, we use the notation $\Lambda_{\delta}(t_{0}):=(t_{0}-\delta, t_{0} +\delta) \cap [0,T)$ and $Q_{\delta}(z_{0}):=B_{\delta}(x_{0}) \times \Lambda_{\delta}(t_{0})$. Now we claim that
\begin{align}\label{claim:cylinder Q_epsilon_general_domain_bound}
    \exists \,\epsilon >0, \ell_{0} \in \mathds{N}: \forall  \, \ell \ge \ell_{0}: Q_{\epsilon}(z_{0}) \subset \Omega_{T} \setminus \overline{E^{(\ell)}}
\end{align}
holds.
Since $\Omega_T \setminus \overline{E}$ is open, there exists an $\epsilon > 0$ such that $Q_{2\epsilon}(z_{0}) \subset \Omega_{T} \setminus \overline{E}$. 
In particular, this implies that $B_{2\epsilon} (x_{0}) \subset \Omega \setminus \overline{E^{t}}$ for all $t \in \Lambda_{2\epsilon}(t_{0})$.
However, this yields
\begin{align*}
    B_{2\epsilon}(x_{0}) \subset \bigcap_{t \in \Lambda_{2\epsilon}(t_{0})} \big( \Omega \setminus \overline{E^{t}} \big) = \Omega \mathbin{\Big\backslash} \bigcup_{t \in \Lambda_{2\epsilon}(t_{0})} \overline{E^{t}}
\end{align*}
and therefore, we have that
\begin{align}\label{ineq:set_bound_for_B_epsilon}
    B_{\epsilon}(x_{0}) \subset  \Omega \mathbin{\Big\backslash} \overline{\bigcup_{t \in \Lambda_{2\epsilon}(t_{0})} \overline{E^{t}}} \subset \Omega \mathbin{\Big\backslash} \overline{\bigcup_{t \in \Lambda_{2\epsilon}(t_{0})} E^{t}}.
\end{align}
Further, assume that $\ell > \tfrac{T}{\epsilon}$, which implies that $h_{\ell} < \epsilon$. Then, there exist $1 \le i_{0} < i_{1} \le \ell$ such that
\begin{align}\label{ineq: time_interval_union_bound}
    \Lambda_{\epsilon}(t_{0}) \subset \bigcup_{i_{0} \le i \le i_{1}} I_{\ell, i} \subset \Lambda_{2 \epsilon} (t_{0}).
\end{align}
Combining \eqref{ineq:set_bound_for_B_epsilon} and \eqref{ineq: time_interval_union_bound} gives us that
\begin{align*}
    B_{\epsilon}(x_{0}) \times I_{\ell, i}
    &\subset
    \Bigg[ \Omega \mathbin{\Big\backslash} \overline{\bigcup_{t \in \Lambda_{2\epsilon}(t_{0})} E^{t}} \Bigg] \times I_{\ell, i}
    \subset
    \Bigg[ \Omega \mathbin{\Big\backslash} \overline{\bigcup_{t \in I_{\ell,i}} E^{t}} \Bigg] \times I_{\ell, i} \\
    &=
    (\Omega \times I_{\ell , i}) \mathbin{\Big\backslash} \Bigg[ \,\overline{\bigcup_{t \in I_{\ell,i}} E^{t}} \times I_{\ell, i} \Bigg]
    =
    (\Omega \times I_{\ell, i}) \setminus \overline{Q_{\ell, i}}
\end{align*}
for all $i_0 \leq i \leq i_1$.
This implies that
\begin{align*}
    Q_{\epsilon}(z_{0}) 
    \subset 
    B_{\epsilon}(x_{0}) \times \bigcup_{i_{0} \le i \le i_{1}} I_{\ell, i} 
    \subset 
    \bigcup_{i_{0} \le i \le i_{1}} \Big[ (\Omega \times I_{\ell, i}) \setminus \overline{Q_{\ell, i}} \Big] 
    \subset 
    \Omega_{T} \setminus \overline{E^{(\ell)}}.
\end{align*}
Therefore, \eqref{claim:cylinder Q_epsilon_general_domain_bound} holds for any $\ell_{0} \in \mathds{N}$ that satisfies $\ell_{0} > \frac{T}{\epsilon}$. 
By \eqref{weak_convergence_to_limit_map_general_domains}$_{2}$ and the
fact that $u_{\ast} \in L^p(Q_{\epsilon}(z_0), \mathds{R}^{N})$, we
deduce that $u_{\ell} - u_{\ast} \wto u - u_{\ast}$ weakly in
$L^{p}(Q_{\epsilon}(z_{0}), \mathds{R}^{N})$ as $\mathfrak{K} \ni \ell \to \infty$.
Thus, by lower semicontinuity, we conclude that
\begin{align*}
    \iint_{Q_{\epsilon}(z_{0})} \vert u -u_{\ast} \vert^{p} \,\dx\dt 
    \le 
    \liminf_{\ell \to \infty} \iint_{Q_{\epsilon}(z_{0})} \vert u_{\ell}-u_{\ast} \vert^{p} \,\dx\dt 
    =0.
\end{align*}
This means that $u \equiv u_{\ast}$ a.e.~in $Q_{\epsilon}(z_{0})$.
Since $z_0\in \Omega_{T} \setminus \overline{E}$ was arbitrary, we find that $u \equiv u_{\ast}$ a.e.~in $\Omega_{T} \setminus \overline{E}$. 
Finally, $u$ admits the correct boundary values by the following Fubini type argument.
By assumption \eqref{eq:boundary_E_zero_measure}, and since $E$ is relatively open in $\Omega_T$, it follows that
\begin{align*}
    0=\L^{n+1}(\partial E) = \L^{n+1} \big( \overline{E} \setminus E \big) = \int_{0}^{T} \L^{n} \big( (\Omega \times \{ t \}) \cap \big( \overline{E} \setminus E \big) \big) \, \dt,
\end{align*}
and hence, $\L^{n} \big( (\Omega \times \{ t \}) \cap \big( \overline{E} \setminus E \big) \big)=0$ for a.e.~$t \in [0,T)$.
In combination with the identity
\begin{align*}
    \big( \Omega \setminus E^{t} \big) \times \{t\} = \big( (\Omega \times \{t\}) \setminus \overline{E} \big) \cup \big( (\Omega \times \{ t \}) \cap \big(\overline{E} \setminus E \big) \big)
\end{align*}
we obtain that $u(t)=u_{\ast}$ a.e.~in $\Omega \setminus E^{t}$ for a.e.~$t \in [0,T)$.
Therefore, $u(t) \in V_{t}$ for the corresponding times $t$, which
means that $u$ admits the correct boundary values in the sense $u \in V_{q}^{p}(E)$.

\subsection{Convergence almost everywhere}
Our aim in this section is to develop the convergence assertion
\begin{align}\label{almost everywhere weakly star convergence with limit}
    \power{u_\ell}{q} \wsto \power{u}{q}
	\quad \text{weakly$^\ast$ in $L^{\infty}\Big(0,T;L^{\frac{q+1}{q}}\big(\Omega,\R^N\big)\Big)$ as $\mathfrak{K} \ni \ell \to \infty$.}
\end{align}
By \eqref{ineq:L^q+1_estimate_for_u_l} we know that $\big( \power{u_\ell}{q} \big)_{\ell \in \mathds{N}}$ is bounded in this function space.
Therefore, for some limit map
\begin{align*}
    w \in L^{\infty}\Big(0,T;L^{\frac{q+1}{q}}\big(\Omega,\R^N\big)\Big),
\end{align*}
and after passing to another subsequence if necessary, we have that
\begin{align}\label{almost everywhere weakly star convergence no limit }
    \power{u_\ell}{q} \wsto w
	\quad \text{weakly$^\ast$ in $L^{\infty} \Big( 0,T;L^{\frac{q+1}{q}}(\Omega,\R^N) \Big)$ as $\mathfrak{K} \ni \ell \to \infty$}.
\end{align}
It remains to show that $w= \power{u}{q}$.
To this end, we derive a variational inequality for the maps $u_\ell$.
We consider a comparison map $v\in V^p_q(E)$ with $\partial_t v \in L^{q+1}(\Omega_{T}, \mathds{R}^{N})$.
For $\tau \in [0,T)$ we choose $k< \ell$ such that $\tau \in I_{\ell,k+1}$.
We denote the restriction of $v$ to the subcylinder $\Omega \times I_{\ell,i}$ by
$v_{\ell, i} \colon \Omega \times I_{\ell, i} \to \mathds{R}^{N}$,
where $i \in \{1,\ldots,k+1\}$.
The maps $v_{\ell,i}$ are admissible comparison maps in
\eqref{ineq:var_ineq_seq_var_sol_u_li}, since
\eqref{cylinders_superset_general_domain} implies $v=u_*$
a.e.~in $(\Omega\setminus E_{\ell,i})\times I_{\ell,i}$.
Recalling~\eqref{initial_condition_u_l,i_generarl_domain}, the definition of the initial values of $u_{\ell,i}$, we obtain the inequality
\begin{align*}
    \int_{\Omega} \b \Big[ u_{\ell,i}^{(0)},v(t_{\ell,i-1}) \Big] \,\dx &= \int_{E_{\ell,i}} \b[u_{\ell,i-1}(t_{\ell, i-1}),v(t_{\ell,i-1})] \,\dx \\
    &\le \int_{\Omega} \b[u_{\ell,i-1}(t_{\ell, i-1}),v(t_{\ell,i-1})] \,\dx.
\end{align*}
Therefore, the variational inequality
\eqref{ineq:var_ineq_seq_var_sol_u_li} implies that
\begin{align}
    \int_{\Omega } &\b[u_{\ell,i}(\widetilde{\tau}),v_{\ell,i}(\widetilde{\tau})] \,\dx
    +\iint_{\Omega \times [t_{\ell,i-1}, \widetilde{\tau})} f(x,u_{\ell,i}, Du_{\ell,i}) \,\dx\dt
    \nonumber \\ &\le
    \iint_{\Omega \times [t_{\ell,i-1}, \widetilde{\tau})} f(x,v_{\ell,i}, Dv_{\ell,i}) \,\dx\dt
    \label{ineq:var_ineq_for_u_l,i_in_general_domains} \\ &\phantom{=}
    + \iint_{\Omega \times [t_{\ell,i-1}, \widetilde{\tau})} \partial_t v_{\ell,i} \cdot \big( \power{v_{\ell,i}}{q}-\power{u_{\ell,i}}{q} \big) \,\dx\dt
    \nonumber \\
    &\phantom{=} 
    + \int_{\Omega } \b[u_{\ell,i-1}(t_{\ell, i-1}),v(t_{\ell,i-1})] \,\dx
    \nonumber
\end{align}
for any $\widetilde{\tau} \in I_{\ell,i}$ and $i \in \{1,\ldots,k+1\}$.
Note that in particular $u_{\ell,0}(t_{\ell,0}) \equiv u_o$ holds, where $t_{\ell,0}=0$.
Summing up \eqref{ineq:var_ineq_for_u_l,i_in_general_domains} over $i \in \{1,\ldots,k\}$ with the choice $\widetilde{\tau}=t_{\ell,i}$ (which yields $[t_{\ell,i-1}, \widetilde{\tau} ) \equiv  I_{\ell,i}$), and adding \eqref{ineq:var_ineq_for_u_l,i_in_general_domains} for $i=k+1$ with
$\widetilde{\tau} = \tau$ we obtain the desired variational inequality for $u_\ell$, i.e.,
\begin{align}\label{ineq:var_ineq_for_u_ell}
    \int_{\Omega } \b[u_{\ell}&(\tau),v(\tau)] \,\dx+\iint_{\Omega_{\tau}} f(x,u_{\ell}, Du_{\ell}) \,\dx\dt \nonumber \\ &\le \iint_{\Omega_{\tau}} f(x,v, Dv) \,\dx\dt + \iint_{\Omega_{\tau}} \partial_t v \cdot \big( \power{v}{q}-\power{u_{\ell}}{q} \big) \,\dx\dt  + \int_{\Omega } \b[u_o,v(0)] \,\dx 
\end{align}
for any $\tau \in [0,T)$ and any $v \in V^p_q(E)$ with $\partial_t v \in L^{q+1}(\Omega_{T}, \mathds{R}^{N})$.

At this stage, we construct suitable comparison maps for the functions $u_\ell$.
For $\lambda>0$, let
\begin{align*}
    w_{\ell, \lambda}:= u_\ast+[u_\ell - u_{\ast}]_{\lambda} \quad\text{ and }\quad w_{\lambda}:= u_\ast+[u - u_{\ast}]_{\lambda},
\end{align*}
where $[u_\ell - u_{\ast}]_{\lambda}$ and $[u - u_{\ast}]_{\lambda}$ denote the mollifications in time of $u_\ell-u_\ast$ and $u-u_\ast$,
respectively, defined according to
\eqref{eq:time_mollification}  with initial values $v_o\equiv0$. 
Since $u_{\ell}, u \in L^{\infty}(0,T;L^{q+1}(\Omega, \mathds{R}^{N}))
\cap L^{p}(0,T;W^{1,p}_{u_\ast}(\Omega, \mathds{R}^{N}))$, the maps
$w_{\ell,\lambda}$ and $w_\lambda$ are contained in the same
function space for any $\lambda >0$. Furthermore, by the definition of
the mollification we have 
\begin{align*}
    \partial_t w_{\ell, \lambda} = \partial_t [u_\ell - u_{\ast}]_{\lambda} = - \tfrac{1}{\lambda} \big([u_\ell - u_{\ast}]_{\lambda}- (u_\ell - u_{\ast}) \big),
\end{align*}
which implies that 
\begin{align}\label{correct_space_for_time_derivative_of_mollification}
\partial_t w_{\ell, \lambda} \in L^{\infty}\big( 0,T;L^{q+1}(\Omega, \mathds{R}^{N}) \big) \cap L^{p}\big( 0,T;W^{1,p}_0(\Omega, \mathds{R}^{N}) \big). 
\end{align}
Further, by Lemma \ref{lem:mollification:estimate_and_continuous_convergence} and \eqref{ineq:L^p_estimate_for_derivative}, we bound the norm of the spatial derivatives by
\begin{align}\label{ineq:space_derivative_estimate}
  \Vert Dw_{\ell, \lambda} \Vert_{L^{p}(\Omega_T, \mathds{R}^{N})}
  &= \Vert Du_\ast + D[u_\ell - u_{\ast}]_{\lambda} \Vert_{L^{p}(\Omega_T, \mathds{R}^{N})} \nonumber \\
   &\le \Vert Du_{\ell} \Vert_{L^{p}(\Omega_T,
     \mathds{R}^{N})}+2T^{\frac1p}\Vert Du_{\ast}\Vert_{L^{p}(\Omega,
     \mathds{R}^{N})}  \nonumber\\
  &\le C_1+2T^{\frac1p}\Vert Du_{\ast}\Vert_{L^{p}(\Omega,
     \mathds{R}^{N})}.
\end{align}
Now, for $z_{0}:=(x_{0}, t_{0}) \in E$ with $t_0>0$ we consider a backward-in-time cylinder
\begin{align*}
    Q_{\rho, s}(z_{0}):= Q_{\rho, s}(x_{0}, t_{0}) = B_{\rho}(x_{0}) \times (t_{0}-s,t_{0}) \Subset E
\end{align*}
with $\rho,s\in(0,1]$. 
Define $\epsilon_{0}:=\dist(Q_{\rho, s}(z_{0}), \partial E)$ and let $\epsilon \in (0, \epsilon_{0})$.
We introduce the following cut-off functions.
First, for the space variable let $\phi_{\epsilon} \in
C^{1}(\mathds{R}^{N}, [0,1])$ be such that  $\phi_{\epsilon} \equiv 1$ in $B_{\rho}(x_{0})$, $\phi_{\epsilon} \equiv 0$ in $\mathds{R}^{N} \setminus B_{\rho+\epsilon}(x_{0})$ and $\vert D\phi_{\epsilon}(x) \vert \le \tfrac{2}{\epsilon}$. 
Further, for the time variable define $\psi_{\epsilon} \in C^{1}((0,T), [0,1])$ such that $\psi_{\epsilon} \equiv 1$ in $(t_{0}-s,t_{0})$, $\psi_{\epsilon} \equiv 0$ in $(0,T) \setminus (t_{0}-s-\epsilon,t_{0}+\epsilon)$ and $\vert \psi_{\epsilon}^{\prime}(t) \vert \le \tfrac{2}{\epsilon}$.
Combining both cut-off functions, we define $\eta_{\epsilon}(x,t):=\phi_{\epsilon}(x)\psi_{\epsilon}(t)$.
Furthermore, we set
\begin{align*}
    v_{\ell, \lambda, \epsilon}:= u_\ast +
  \eta_{\epsilon}(w_{\ell,\lambda}-u_\ast)
  \quad\text{ and }\quad
  v_{\lambda, \epsilon}:=u_\ast + \eta_{\epsilon}(w_{ \lambda}-u_\ast).
\end{align*}
In particular, we have that $v_{\ell, \lambda, \epsilon} \equiv u_{\ast}$ a.e.~in $\Omega_{T} \setminus E$.
Now fix both $\lambda > 0 $ and $\epsilon \in (0,\epsilon_{0})$. 
By \eqref{weak_convergence_to_limit_map_general_domains}$_2$ and Lemma \ref{lem:mollification_weak_convergence}, we find that
\begin{align}\label{weak_limit_of_v_l_lambda_epsilon}
    v_{\ell, \lambda, \epsilon} \wto v_{ \lambda, \epsilon} \text{
  weakly in $L^p\big( 0,T;W_{u_{\ast}}^{1,p}(\Omega,\R^N) \big)$ \quad as $\mathfrak{K}\ni\ell \to \infty$.}
\end{align}
Moreover, for the time derivative we compute
\begin{align}\label{time_derivative_combined_mollification}
    \partial_t v_{\ell, \lambda, \epsilon} &= \phi_{\epsilon}  \psi_{\epsilon}^{\prime}(t) (w_{\ell, \lambda}-u_\ast) + \eta_{\epsilon} \partial_t w_{\ell, \lambda}  \nonumber \\
    &= \phi_{\epsilon}  \psi_{\epsilon}^{\prime}(t)(w_{\ell,\lambda}-u_\ast) - \tfrac{1}{\lambda} \eta_{\epsilon} ([u_{\ell} - u_{\ast}]_{\lambda} - (u_{\ell} - u_{\ast})) \nonumber \\
    &= \phi_{\epsilon} \psi_{\epsilon}^{\prime}(t) (w_{\ell, \lambda}-u_\ast) + \tfrac{1}{\lambda} \eta_{\epsilon} (u_\ell-w_{\ell, \lambda}).
\end{align}
On the one hand, this implies that
\begin{align}\label{correct_space_for_time_derivative_of_mollification_combined}
\partial_t v_{\ell, \lambda, \epsilon} \in L^{\infty} \big( 0,T;L^{q+1}(\Omega, \mathds{R}^{N}) \big) \cap L^{p} \big( 0,T;W^{1,p}(\Omega, \mathds{R}^{N}) \big)
\end{align}
for any $\lambda>0$, $\epsilon \in (0,\epsilon_0)$, and $\ell \in \N$.
On the other hand, using again \eqref{weak_convergence_to_limit_map_general_domains}$_2$ and Lemma~\ref{lem:mollification:estimate_and_continuous_convergence}, we have that $(\partial_t v_{\ell, \lambda, \epsilon})_{\ell \in \N}$ is bounded in the space $L^p\big(\Omega_{T},\R^{N}\big)$ for any fixed $\lambda>0$ and $\epsilon \in (0,\epsilon_0)$.
Thus, combining this with \eqref{weak_limit_of_v_l_lambda_epsilon} and applying Rellich's theorem, we conclude that
\begin{align}\label{strong_limit_of_v_l_lambda_epsilon}
     v_{\ell, \lambda, \epsilon} \to v_{ \lambda, \epsilon} \text{
  strongly in $L^p(\Omega_{T},\R^{Nn})$ \quad as $\mathfrak{K}\ni\ell \to \infty$.}
\end{align}
In this context, note that it is not necessary to pass to another subsequence to obtain the strong convergence in \eqref{strong_limit_of_v_l_lambda_epsilon}, since the limit is already determined by the weak convergence in \eqref{weak_limit_of_v_l_lambda_epsilon}.

Observing that $v_{\ell, \lambda, \epsilon}$ is an admissible
comparison map in the variational inequality
\eqref{ineq:var_ineq_for_u_ell}, omitting both non-negative terms on
the left-hand side and recalling~\eqref{time_derivative_combined_mollification}, we infer
\begin{align*}
    0 &\le
    \iint_{\Omega_{\tau}} f(x,v_{\ell, \lambda, \epsilon}, Dv_{\ell, \lambda, \epsilon}) \,\dx\dt
    + \iint_{\Omega_{\tau}} \phi_{\epsilon} (w_{\ell, \lambda}-u_\ast) \psi_{\epsilon}^{\prime} \cdot \big( \power{v_{\ell, \lambda, \epsilon}}{q}-\power{u_{\ell}}{q} \big) \,\dx\dt
    \nonumber\\ &\phantom{=}
    +\iint_{\Omega_{\tau}} \tfrac{1}{\lambda} \eta_{\epsilon}(u_{\ell} - w_{\ell, \lambda}) \cdot \big( \power{v_{\ell, \lambda, \epsilon}}{q} - \power{u_{\ell}}{q} \big) \,\dx\dt
    + \int_{\Omega } \b[u_o,u_{\ast}] \,\dx
\end{align*}
for any $\tau\in[0,T)$. 
Note that $\b[u_o,u_{\ast}] \le \tfrac{2q}{q+1} \vert u_o \vert^{q+1}
+\tfrac{2}{q+1} \vert u_{\ast} \vert^{q+1} $. By definition, the
support of $\eta_{\epsilon}$ is contained in the closure of $\widehat
Q:=B_{\rho+\eps}(x_o)\times(t_o-s-\eps,t_o+\eps)$. Moreover, we have
$\eta_\eps\equiv1$ on $Q_{\rho,s}:=Q_{\rho,s}(z_o)$, which implies
$v_{\ell,\lambda,\eps}=w_{\ell,\lambda}$ on this cylinder. 
Therefore, splitting up the penultimate integral on the right-hand side of
the preceding inequality and multiplying the inequality by $\lambda$, we obtain that
\begin{align}\label{ineq:Q_rho_s_estimate_with_split_integrals}
    \iint_{Q_{\rho, s}} &(w_{\ell, \lambda} - u_\ell ) \cdot \big( \power{w_{\ell, \lambda}}{q} - \power{u_{\ell}}{q} \big) \,\dx\dt \nonumber \\
    &\le c(q) \lambda \int_{\Omega} \vert u_o \vert^{q+1} \,\dx + c(q) \lambda \int_{\Omega} \vert u_{\ast} \vert^{q+1} \,\dx \nonumber \\
    &\phantom{=}
    + \lambda \iint_{\Omega_{\tau}} f(x,v_{\ell, \lambda, \epsilon}, Dv_{\ell, \lambda, \epsilon}) \,\dx\dt \nonumber \\
    &\phantom{=}
    + \lambda \iint_{\Omega_{\tau}} \phi_{\epsilon} (w_{\ell, \lambda}-u_\ast) \psi_{\epsilon}^{\prime} \cdot \big( \power{v_{\ell, \lambda, \epsilon}}{q}-\power{u_{\ell}}{q} \big) \,\dx\dt \nonumber \\
    &\phantom{=}
    + \iint_{\widehat Q \setminus Q_{\rho, s}} \eta_{\epsilon}(u_{\ell} - w_{\ell, \lambda}) \cdot \big( \power{v_{\ell, \lambda,\eps}}{q} - \power{u_{\ell}}{q} \big) \,\dx\dt \nonumber \\
    &=: c(q) \lambda \int_{\Omega} \vert u_o \vert^{q+1} \,\dx + c(q) \lambda \int_{\Omega} \vert u_{\ast} \vert^{q+1} \,\dx+\lambda \mathrm{I}_1 +\lambda \mathrm{I}_2 + \mathrm{I}_3, 
\end{align}
where the definitions of $\mathrm{I}_1$--$\mathrm{I}_3$ are apparent
from the context. Our goal now is to estimate each of these terms
separately. We start off with the first integral. By means of the $p$-growth condition \eqref{eq:integrand}$_3$, the fact that $0 \le \eta \le 1$, H\"older's and
Poincaré's inequality, and by the bound \eqref{ineq:space_derivative_estimate}, we derive the estimate
\begin{align}
    \vert I_{1} \vert
    &\le
    L \iint_{\Omega_T}  \vert Dv_{\ell, \lambda, \epsilon}\vert^{p} +  \vert v_{\ell, \lambda, \epsilon}\vert^{p} + G \,\dx\dt  \nonumber\\
    &\le
    c(p)L \iint_{\Omega_T}  \vert Dw_{\ell,\lambda}\vert^{p} + |Du_\ast|^p + |u_\ast|^p + G \,\dx\dt
    \nonumber\\&\phantom{=}
    +c(p) L \iint_{\Omega_T} \vert (w_{\ell, \lambda}-u_\ast) \otimes D\phi_{\epsilon} \vert^{p} + \vert w_{\ell,\lambda}-u_\ast \vert^{p} \,\dx\dt
    \nonumber \\ &\le
    c(p)L \iint_{\Omega_T}  \vert Dw_{\ell,\lambda}\vert^{p} + |Du_\ast|^p + |u_\ast|^p + G \,\dx\dt
    \nonumber\\&\phantom{=}
    + c(p) L \Big( 1 + \tfrac{2^{p}}{\epsilon^{p}}  \Big) \iint_{\Omega_T} \vert w_{\ell, \lambda} - u_\ast \vert^{p}\,dx\dt
    \nonumber \\ &\le
    c(p,\Omega)L \Big( 1 + \tfrac{2^{p}}{\epsilon^{p}}  \Big)\iint_{\Omega_T}  \vert Dw_{\ell,\lambda}\vert^{p} + |Du_\ast|^p + |u_\ast|^p + G \,\dx\dt
    \nonumber \\ &\le
    c(p,\Omega)L \Big( 1 + \tfrac{2^{p}}{\epsilon^{p}} \Big)\bigg(C_1^p+  T \int_{\Omega}  |Du_\ast|^p + |u_\ast|^p + G \,\dx \bigg). 
    \label{ineq:Q_rho_s_estimate_1}
\end{align}
To proceed with the second integral, note that by Lemma~\ref{lem:mollification:estimate_and_continuous_convergence}, we have that
\begin{align}
    \Vert w_{\ell, \lambda} - u_\ast\Vert_{L^{m}(\Omega_{T},
  \mathds{R}^{N})}
  &=
   \big \Vert [u_\ell-u_\ast]_\lambda \big\Vert_{L^{m}(\Omega_{T},
  \mathds{R}^{N})}\nonumber\\
  &\le
    \Vert u_{\ell}-u_\ast \Vert_{L^{m}(\Omega_{T}, \mathds{R}^{N})}
    \label{ineq:mollification_estimate_in_L^q+1}
\end{align}
holds for any $m\in \Big[ 1,\frac{p(n+q+1)}{n} \Big]$. 
Further, taking into account that $0\le \phi_{\epsilon} \le 1$, $
\vert \psi_{\epsilon}^{\prime} \vert \le \tfrac{2}{\epsilon}$ and
$0\le \eta_{\epsilon} \le 1$, and using Young's inequality,
Lemma~\ref{lem:mollification:estimate_and_continuous_convergence},
\eqref{ineq:mollification_estimate_in_L^q+1} with $m=q+1$ and~\eqref{ineq:L^q+1_estimate_for_u_l}, we conclude that
\begin{align}\label{ineq:Q_rho_s_estimate_2}
  \vert I_{2} \vert
  &\le \tfrac{2}{\eps}\iint_{\Omega_T}
    \vert w_{\ell, \lambda}-u_\ast \vert \big(\vert v_{\ell,\lambda,\eps}
    \vert^{q}
    + \vert u_\ell\vert^{q} \big) \,\dx\dt \nonumber \\
  &\le
  \tfrac{c(q)}{\eps}\iint_{\Omega_T} \big(\vert w_{\ell,
  \lambda}- u_\ast \vert\big) \big(\vert w_{\ell, \lambda}-u_\ast
  \vert^{q} +\vert u_\ast\vert^q + \vert u_\ell \vert^{q} \big) \,\dx\dt \nonumber \\
    &\le \tfrac{c(q)}{\epsilon} \iint_{\Omega_T} \vert w_{\ell,
      \lambda}-u_\ast \vert^{q+1} + \vert u_\ast\vert^{q+1}+\vert u_\ell \vert^{q+1} \,\dx\dt \nonumber \\
    &\le \tfrac{c(q)}{\epsilon} \iint_{\Omega_T} \vert u_{\ell} \vert^{q+1} + \vert u_{\ast} \vert^{q+1} \,\dx\dt \nonumber \\
    &\le \tfrac{c(q)T}{\epsilon} \bigg(C_2^{q+1}+ \int_{\Omega} \vert u_{\ast} \vert^{q+1} \,\dx\bigg).
\end{align}
Now we devote ourselves to the third integral.
First, we use the fact that $0\le \eta_{\epsilon} \le 1$, Young's inequality and H\"older's inequality with exponents $\tfrac{m}{q+1}$ and $\tfrac{m}{m-(q+1)}$, where $m:= \tfrac{p(n+q+1)}{n}$.
Regarding this step, it is important to point out that $m>q+1$, since we have assumed that $p>\max \Big\{ 1, \tfrac{n(q+1)}{n+q+1} \Big\}$.
Next, we use \eqref{ineq:mollification_estimate_in_L^q+1} and finally, the Gagliardo--Nirenberg inequality and the bounds \eqref{ineq:L^p_estimate_for_derivative} and \eqref{ineq:L^q+1_estimate_for_u_l}.
Indeed, proceeding in this way, we infer
\begin{align}\label{ineq:Q_rho_s_estimate_3}
    \vert I_{3} \vert
    &\le
    \iint_{\widehat Q \setminus Q_{\rho, s}} (\vert u_\ell \vert + \vert w_{\ell, \lambda} \vert) \big( \vert v_{\ell, \lambda,\eps} \vert^{q} + \vert u_\ell \vert^{q} \big) \,\dx\dt \nonumber \\
    &\le
    c(q) \iint_{\widehat Q \setminus Q_{\rho, s}} \vert w_{\ell,\lambda}-u_\ast \vert^{q+1} + \vert u_\ell \vert^{q+1}+\vert u_\ast\vert^{q+1} \,\dx\dt \nonumber \\
    & \le
    c(q) \big\vert \widehat Q \setminus Q_{\rho, s} \big\vert^{\frac{m-(q+1)}{m}}
    \nonumber \\ &\phantom{=} \cdot
      \Big(\|w_{\ell,\lambda}-u_\ast\|_{L^{m}(\widehat Q,\R^N)}^{q+1}
      +\|u_\ell\|_{L^{m}(\widehat Q,\R^N)}^{q+1}
      +\|u_\ast\|_{L^{m}(\widehat Q,\R^N)}^{q+1}\Big)
    \nonumber \\
    & \le
    c(n,q) \eps^{\frac{m-(q+1)}{m}}
      \Big(\|u_\ell\|_{L^{m}(\widehat Q,\R^N)}^{q+1}
      +\|u_\ast\|_{L^{m}(\widehat Q,\R^N)}^{q+1}\Big)
    \nonumber \\
    & \le c(n,p,q) \epsilon^{\frac{m-(q+1)}{m}}\Bigg[  \bigg(
      \sup_{t \in (0,T)} \int_{\Omega} \vert u_\ell \vert^{q+1} \,\dx
      \bigg)^{\frac{p}{n}}
      \iint_{\Omega_{T}} \vert Du_\ell \vert^{p} \,\dx\dt  \nonumber \\
    &\phantom{=}
    \qquad\qquad\qquad\qquad + T \bigg( \int_{\Omega} \vert u_{\ast} \vert^{q+1} \,\dx \bigg)^{\frac{p}{n}} \int_\Omega \vert Du_{\ast} \vert^{p} \,\dx \Bigg]^{\frac{q+1}{m}} \nonumber \\
    &\le
    c(n,p,q) \epsilon^{\frac{m-(q+1)}{m}}
    \bigg(C_1^p C_2^\frac{(q+1)p}{n}
    +\|u_\ast\|_{L^{q+1}(\Omega,\R^N)}^\frac{(q+1)p}{n}\|Du_\ast\|_{L^{p}(\Omega,\R^N)}^p\bigg)^{\frac{q+1}{m}}.
\end{align}
Inserting \eqref{ineq:Q_rho_s_estimate_1}, \eqref{ineq:Q_rho_s_estimate_2} and \eqref{ineq:Q_rho_s_estimate_3} into \eqref{ineq:Q_rho_s_estimate_with_split_integrals} yields
\begin{align*}
    \iint_{Q_{\rho, s}} &(w_{\ell, \lambda} - u_\ell ) \cdot \big( \power{w_{\ell, \lambda}}{q} - \power{u_{\ell}}{q} \big) \,\dx\dt 
    \le C \big(1 + \epsilon^{-p} \big) \lambda
    + C \epsilon^{\frac{m-(q+1)}{m}},
\end{align*}
with a positive exponent $\tfrac{m-(q+1)}{m}>0$ and a constant $C$ depending only on $n,p,q,$ $\nu,L$, $\Omega,T,u_\ast,u_o$ and $G$.
Now, by Lemma~\ref{lem:technical_lemma_2}, H\"older's
inequality, Lemma~\ref{lem:technical_lemma},
\eqref{ineq:mollification_estimate_in_L^q+1}, the preceding inequality
and~\eqref{ineq:L^q+1_estimate_for_u_l}, we obtain that
\begin{align*}
    &\iint_{Q_{\rho, s}(z_o)} \big\vert w_{\ell, \lambda} -u_\ell \big\vert^{q+1} \,\dx\dt \\
    &\le
    c(q) \iint_{Q_{\rho, s}(z_o)} \Big(|w_{\ell, \lambda}|^\frac{q+1}{2} + |u_\ell|^\frac{q+1}{2} \Big) \Big| \power{w_{\ell, \lambda}}{\frac{q+1}{2}} - \power{u_\ell}{\frac{q+1}{2}} \Big| \,\dx\dt \\
    & \le
    c(q) \bigg( \iint_{Q_{\rho, s}(z_o)}  |w_{\ell, \lambda}|^{q+1} + |u_\ell|^{q+1} \,\dx\dt \bigg)^{\frac{1}{2}}
    \bigg( \iint_{Q_{\rho, s}(z_o)}  \Big\vert \power{w_{\ell, \lambda}}{\frac{q+1}{2}} - \power{u_\ell}{\frac{q+1}{2}} \Big\vert^{2} \,\dx\dt \bigg)^{\frac{1}{2}} \\
    &\le
    c(q) \bigg(\iint_{\Omega_T}|u_\ell|^{q+1}+|u_\ast|^{q+1}\,\dx\dt \bigg)^{\frac{1}{2}}
    \bigg( \iint_{Q_{\rho, s}(z_o)} (w_{\ell, \lambda} - u_\ell ) \cdot \big(\power{w_{\ell, \lambda}}{q} - \power{u_{\ell}}{q}\big) \bigg)^{\frac{1}{2}} \\
    &\leq
    c(n,p,q,\nu,L,\Omega,T,u_\ast,u_o,G) \left( \big(1 + \epsilon^{-p} \big) \lambda + \epsilon^{\frac{m-(q+1)}{m}} \right)^\frac{1}{2}.
\end{align*}
We stress that the constant on the right-hand side is independent of $\ell$.
Therefore, for an arbitrary $\delta > 0$, we can choose first
$\epsilon$ and then $\lambda$ small enough to obtain
\begin{align*}
    \Vert w_{\ell, \lambda} - u_\ell \Vert_{L^{\min\{ p, q+1\}}(Q_{\rho, s}(z_o), \mathds{R}^{N})} \le \tfrac{\delta}{2}
\end{align*}
for any $\ell \in \mathds{N}$.  Furthermore, since $[u-u_\ast]_{\lambda} \to u-u_\ast$ in $L^{\min\{ p, q+1\}}(Q_{\rho, s}(z_0), \mathds{R}^{N})$ as $\lambda \downarrow 0$ by Lemma \ref{lem:mollification:estimate_and_continuous_convergence}, by decreasing $\lambda$ we can assume that
\begin{align*}
    \Vert w_{\lambda} - u \Vert_{L^{\min\{ p, q+1\}}(Q_{\rho, s}(z_o), \mathds{R}^{N})} \le \tfrac{\delta}{2}.
\end{align*}
This implies that
\begin{align*}
    \Vert u_\ell &- u \Vert_{L^{\min\{ p, q+1\}}(Q_{\rho, s}(z_o), \mathds{R}^{N})} \\
    &\le 
    \Vert u_\ell - w_{\ell, \lambda} \Vert_{L^{\min\{ p, q+1\}}(Q_{\rho, s}(z_o), \mathds{R}^{N})} +\Vert w_{\ell, \lambda} - w_{ \lambda} \Vert_{L^{\min\{ p, q+1\}}(Q_{\rho, s}(z_o), \mathds{R}^{N})} \\
    &\phantom{=}
    +\Vert w_{\lambda} - u \Vert_{L^{\min\{ p, q+1\}}(Q_{\rho, s}(z_o), \mathds{R}^{N})} \\
    &\le \delta +  \Vert w_{\ell, \lambda} - w_{\lambda} \Vert_{L^{\min\{ p, q+1\}}(Q_{\rho, s}(z_o), \mathds{R}^{N})} \\
    &= \delta +  \Vert v_{\ell, \lambda, \epsilon} - v_{\lambda, \epsilon} \Vert_{L^{\min\{ p, q+1\}}(Q_{\rho, s}(z_o), \mathds{R}^{N})}
\end{align*}
holds for any $\ell \in \mathds{N}$. In the last line, we have used
that $v_{\ell,\lambda,\epsilon}=w_{\ell,\lambda}$ and
$v_{\lambda,\epsilon}=w_\lambda$ in $Q_{\rho,s}(z_o)$ by definition. By \eqref{strong_limit_of_v_l_lambda_epsilon}, it follows that
\begin{align*}
    \lim_{\mathfrak{K}\ni\ell \to \infty} \Vert u_\ell - u \Vert_{L^{\min\{ p, q+1\}}(Q_{\rho, s}(z_o), \mathds{R}^{N})} \le \delta.
\end{align*}
Since $\delta$ was arbitrary, by passing to a (not relabelled) subsequence we find that 
\begin{align*}
    u_\ell \to u \text{ strongly in $L^{\min\{ p, q+1\}}(Q_{\rho, s}(z_o), \mathds{R}^{N})$ as $\ell \to \infty$}.
\end{align*}
Furthermore, since $z_o \in E$ was arbitrary, by a diagonal argument we can again pass to a (not relabeled) subsequence, such that 
\begin{align}\label{almost_everywhere_convergence_general_domains}
    u_\ell \to u \text{ \quad a.e.~in $E$ as $\ell \to \infty$}.
\end{align}
Combining \eqref{almost_everywhere_convergence_general_domains} and \eqref{almost everywhere weakly star convergence no limit } implies \eqref{almost everywhere weakly star convergence with limit}.

\subsection{Variational inequality for the limit map}
In this section, our aim is to show that the limit map from \eqref{limit_map_general_domains} satisfies the variational inequality \eqref{eq:variational_inequality}. 
Let $v \in V^p_q(E)$ with $\partial_t v \in L^{q+1}(\Omega_{T},
\mathds{R}^{N})$ be given, which implies in particular $v \equiv
u_\ast$ a.e.~in $\Omega_T \setminus E$ and $v \in C^{0}([0,T]; L^{q+1}(\Omega, \mathds{R}^{N}))$. Moreover, $v$ is an admissible comparison map in \eqref{ineq:var_ineq_for_u_ell}.
To be able to treat the boundary term at time $\tau$, we integrate \eqref{ineq:var_ineq_for_u_ell} over $\tau \in [t_o, t_o + \delta)$, for $\delta \in (0,T-t_o)$, and divide the resulting inequality by $\delta$.
Since $f$ is non-negative, this leads to
\begin{align}
    \bint_{t_o}^{t_o+\delta} \int_{\Omega } &\b[u_{\ell}(\tau),v(\tau)] \,\dx \,\d\tau
    +\iint_{\Omega_{t_o}} f(x,u_{\ell}, Du_{\ell}) \,\dx\dt
    \nonumber \\ &\le
    \iint_{\Omega_{t_o+\delta}} f(x,v, Dv) \,\dx\dt
    +\bint_{t_o}^{t_o+\delta} \iint_{\Omega_{\tau}} \partial_t v \cdot \big( \power{v}{q}-\power{u_{\ell}}{q} \big) \,\dx\dt \,\d\tau
    \label{general_domains:var_ineq_before_ell_limit}\\ &\phantom{=}
    + \int_{\Omega } \b[u_o,v(0)] \,\dx.
    \nonumber
\end{align}
First, since $f$ satisfies \eqref{eq:integrand}, the second term on the left-hand side is lower semicontinuous with respect to weak convergence in $L^p(0,T;W^{1,p}(\Omega,\R^N))$.
Hence, by \eqref{weak_convergence_to_limit_map_general_domains}$_{2}$ it follows that
\begin{align}\label{general_domains:limit_1}
    \iint_{\Omega_{t_o}} f(x,u, Du) \,\dx\dt  \le \liminf_{\mathfrak{K} \ni \ell \to \infty} \iint_{\Omega_{t_o}} f(x,u_{\ell}, Du_{\ell}) \,\dx\dt. 
\end{align}
Next, \eqref{almost everywhere weakly star convergence with limit} gives us that
\begin{align}
    \lim_{\mathfrak{K} \ni \ell \to \infty} &\bint_{t_o}^{t_o+\delta} \iint_{\Omega_{\tau}} \partial_t v \cdot \big( \power{v}{q}-\power{u_{\ell}}{q} \big) \,\dx\dt \, \d\tau 
    \nonumber \\
    &=
    \bint_{t_o}^{t_o+\delta} \iint_{\Omega_{\tau}} \partial_t v \cdot \big( \power{v}{q}-\power{u}{q} \big) \,\dx\dt\, \d\tau.
    \label{general_domains:limit_2}
\end{align}
Now, combining \eqref{weak_convergence_to_limit_map_general_domains}$_{1}$ with \eqref{almost everywhere weakly star convergence with limit}, and recalling the definition of $\b[\cdot, \cdot]$ yields
\begin{align}\label{general_domains:limit_3}
    \bint_{t_o}^{t_o+\delta} \int_{\Omega } \b[u(\tau),v(\tau)] \,\dx \, \d\tau
    \le
    \liminf_{\mathfrak{K} \ni \ell \to \infty} \bint_{t_o}^{t_o+\delta} \int_{\Omega } \b[u_{\ell}(\tau),v(\tau)] \,\dx \, \d\tau.
\end{align}
Inserting \eqref{general_domains:limit_1}, \eqref{general_domains:limit_2}, and \eqref{general_domains:limit_3} into \eqref{general_domains:var_ineq_before_ell_limit}, we obtain that
\begin{align*}
    &\bint_{t_o}^{t_o+\delta} \int_{\Omega } \b[u(\tau),v(\tau)] \,\dx \, \d\tau
    +\iint_{\Omega_{t_o}} f(x,u, Du) \,\dx\dt
    \nonumber \\ &\le
    \iint_{\Omega_{t_o+\delta}} f(x,v, Dv) \,\dx\dt
    +\bint_{t_o}^{t_o+\delta} \iint_{\Omega_{\tau}} \partial_t v \cdot \big( \power{v}{q}-\power{u}{q} \big) \,\dx\dt \, \d\tau
    + \int_{\Omega } \b[u_o,v(0)] \,\dx
\end{align*}
holds for any $t_o \in [0,T)$. Finally, we pass to the limit $\delta \downarrow 0$ by means of Lebesgue's differentiation theorem. In this context, note that we have in particular $\tau \mapsto \int_\Omega \b[u(\tau),v(\tau)] \,\dx \in L^\infty([0,T])$.
Hence, we conclude that
\begin{align*}
     \int_{\Omega } \b&[u(t_o),v(t_o)] \,\dx 
     +\iint_{\Omega_{t_o}} f(x,u, Du) \,\dx\dt \nonumber \\ 
     &\le  \iint_{\Omega_{t_o}} f(x,v, Dv) \,\dx\dt 
     + \iint_{\Omega_{t_o}} \partial_t v \cdot (\power{v}{q}-\power{u}{q}) \,\dx\dt 
     + \int_{\Omega } \b[u_o,v(0)] \,\dx 
\end{align*}
holds for a.e.~$t_o \in [0,T]$ and any comparison map $v \in V^p_q(E)$ with time derivative $\partial_t v \in L^{q+1}(\Omega_{T}, \mathds{R}^{N})$. Therefore, the limit map $u$ solves the variational inequality in the sense of Definition~\ref{Definition:variational_solution}. This proves Theorem~\ref{thm:existence_in_general_domains}.

\section{Proof of Theorem \ref{thm:time_derivative_in_the_dual_space_general_domains}}
\label{sec:time-derivative-proof}
The proof of Theorem \ref{thm:time_derivative_in_the_dual_space_general_domains} in this section generalizes the arguments from \cite[Section 5.2]{BDSS18}, where the case $q=1$ was treated. The starting point are corresponding estimates for the time derivative from \cite[Theorem~2.4]{SSSS} in the case of nondecreasing domains.

First, for $\ell \in \mathds{N}$ consider the approximate solution $u_{\ell}$ defined by~\eqref{definition_of_approximate_solution(general_domain)}.
    From~\cite[Theorem~2.4]{SSSS} it follows that $\partial_t \power{u_{\ell,i}}{q} \in (V^{p,0}(Q_{\ell,i}))^{\prime}$ for any $i \in \{1,\ldots,\ell\}$. Furthermore, from the proof of~\cite[Theorem~2.4]{SSSS}, we infer the estimate
    \begin{align*}
        \bigg\vert \iint_{Q_{\ell,i}}  \power{u_{\ell,i}}{q} \cdot \partial_t \phi \,\dx\dt \bigg\vert
        \le
        c \iint_{Q_{\ell,i}} \Big[ \vert Du_{\ell,i} \vert^{p-1} + \vert u_{\ell,i} \vert^{p-1} + \vert G \vert^{\frac{1}{p'}}  \Big] [\vert D\phi \vert + \vert \phi \vert ] \,\dx\dt
    \end{align*}
    with a positive constant $c=c(p,L)$, for all $\phi \in V^{p,0}(Q_{\ell,i})$.
    Now, we fix a function $\phi \in C_{0}^{\infty}(E,\R^N)$.
    For $\epsilon \in \big(0, \tfrac{h_{\ell}}{4}\big]$ and $i \in \{1,\ldots,\ell\}$, we consider cut-off functions $\psi_{\ell,i}^{(\epsilon)} \in C^{0,1}([0,T])$ defined by
    \begin{equation*}
    \psi_{\ell,i}^{(\epsilon)}(t) =
    \begin{cases} 
    0, & \text{for } t \in [0, t_{\ell,i-1}], \\
    \frac{t - t_{\ell,i-1}}{\epsilon}, & \text{for } t \in (t_{\ell,i-1}, t_{\ell,i-1} + \epsilon), \\
    1, & \text{for } t \in [t_{\ell,i-1} + \epsilon, t_{\ell,i} - \epsilon], \\
    \frac{t_{\ell,i} - t}{\epsilon}, & \text{for } t \in (t_{\ell,i} - \epsilon, t_{\ell,i}), \\
    0, & \text{for } t \in [t_{\ell,i}, T].
    \end{cases}
    \end{equation*}
    Then we define $\phi_{\ell,i}^{(\epsilon)} :=
    \psi_{\ell,i}^{(\epsilon)}\phi$. Note that $\spt \Big(\phi_{\ell,i}^{(\epsilon)} \Big) \subset\overline{Q_{\ell,i}}$, since we have that $E\cap (\Omega \times I_{\ell, i}) \subset Q_{\ell,i}$.
    Therefore, $\phi_{\ell,i}^{(\epsilon)}$ is an admissible test function in the last inequality.
    Since $0 \leq \psi_{\ell,i}^{(\epsilon)} \leq 1$ is independent of the spatial variables, we find that
    \begin{align*}
        \bigg \vert \iint_{Q_{\ell,i}}&  \power{u_{\ell,i}}{q} \cdot \partial_t \phi_{\ell,i}^{(\epsilon)} \,\dx\dt \bigg \vert \\
        &\le c \iint_{Q_{\ell,i}} \Big[ \vert Du_{\ell,i} \vert^{p-1} + \vert u_{\ell,i} \vert^{p-1} + \vert G \vert^{\frac{1}{p'}}  \Big] \Big[ \Big\vert D\phi_{\ell,i}^{(\epsilon)} \Big\vert + \Big\vert \phi_{\ell,i}^{(\epsilon)} \Big\vert \Big] \,\dx\dt \\
        &\le c \iint_{Q_{\ell,i}} \Big[ \vert Du_{\ell,i} \vert^{p-1} + \vert u_{\ell,i} \vert^{p-1} + \vert G \vert^{\frac{1}{p'}}  \Big] [\vert D\phi \vert + \vert \phi \vert ] \,\dx\dt.
    \end{align*}
    Summing up these estimates over $i \in \{ 1,\ldots,\ell \}$, we obtain that
    \begin{align}\label{time_derivative_estimate_with_remainder}
        \bigg\vert \iint_{\Omega_{T}}&  \power{u_{\ell}}{q} \cdot \partial_t \phi \,\dx\dt \bigg\vert
        = 
        \Bigg\vert \sum_{i=1}^{\ell} \iint_{\Omega \times I_{\ell,i}}  \power{u_{\ell,i}}{q} \cdot \partial_t \phi \,\dx\dt \Bigg\vert \nonumber \\
        &\le
        \sum_{i=1}^{\ell} \bigg\vert  \iint_{\Omega \times I_{\ell,i}}  \power{u_{\ell,i}}{q} \cdot \partial_t \phi_{\ell,i}^{(\epsilon)} \,\dx\dt \bigg\vert + R_{\ell}^{(\epsilon)} \nonumber \\
        &\le
        c \sum_{i=1}^{\ell} \iint_{Q_{\ell,i}} \Big[ \vert Du_{\ell,i} \vert^{p-1} + \vert u_{\ell,i} \vert^{p-1} + \vert G \vert^{\frac{1}{p'}}  \Big] [\vert D\phi \vert + \vert \phi \vert ] \,\dx\dt+ R_{\ell}^{(\epsilon)} \nonumber \\
        &=
        c\iint_{\Omega_{T}} \Big[ \vert Du_{\ell} \vert^{p-1} + \vert u_{\ell} \vert^{p-1} + \vert G \vert^{\frac{1}{p'} }  \Big] [\vert D\phi \vert + \vert \phi \vert ] \,\dx\dt+ R_{\ell}^{(\epsilon)}, 
    \end{align}
    where the remainder term is given by
    \begin{align*}
        R_{\ell}^{(\epsilon)} := \Bigg\vert \sum_{i=1}^{\ell} \iint_{Q_{\ell,i}}  \power{u_{\ell,i}}{q} \cdot \Big( \partial_t \phi - \partial_t \phi_{\ell,i}^{(\epsilon)} \Big) \,\dx\dt \Bigg\vert.
    \end{align*}
    Since $\power{u_{\ell,i}}{q} \in C^{0} \Big(\overline{I_{\ell,i}}, L^{\frac{q+1}{q}}(E_{\ell,i}, \mathds{R}^{N}) \Big)$ and by definition of $\phi_{\ell,i}^{(\epsilon)}$, we get that
    \begin{align*}
        R_{\ell}^{(\epsilon)}
        &\leq
        \Bigg\vert \sum_{i=1}^{\ell} \iint_{Q_{\ell,i}} \Big( 1-\psi_{\ell,i}^{(\epsilon)} \Big) \power{u_{\ell,i}}{q} \cdot\partial_t \phi \,\dx\dt \Bigg\vert
        + \Bigg\vert \sum_{i=1}^{\ell} \iint_{Q_{\ell,i}} \big(\psi_{\ell,i}^{(\epsilon)}\big)^{\prime} \power{u_{\ell,i}}{q} \cdot \phi \,\dx\dt \Bigg\vert \nonumber \\
        &=
        \Bigg\vert \sum_{i=1}^{\ell} \iint_{Q_{\ell,i}} \Big( 1-\psi_{\ell,i}^{(\epsilon)} \Big) \power{u_{\ell,i}}{q} \cdot\partial_t \phi \,\dx\dt \Bigg\vert
        \nonumber \\ &\phantom{=}
        + \Bigg| \sum_{i=1}^{\ell} 
        \bigg[ \tfrac{1}{\epsilon} \iint_{E_{\ell,i} \times (t_{\ell,i-1}, t_{\ell,i-1} + \epsilon)} \power{u_{\ell,i}}{q} \cdot \phi \,\dx\dt
        \nonumber \\ &\phantom{=+ \bigg| \sum_{i=1}^{\ell}\bigg[}
        - \tfrac{1}{\epsilon} \iint_{E_{\ell,i} \times (t_{\ell,i} - \epsilon, t_{\ell,i})} \power{u_{\ell,i}}{q} \cdot \phi \,\dx\dt \bigg] \Bigg| \nonumber \\
        &\to 
        \Bigg\vert \sum_{i=1}^{\ell} \bigg[ \int_{E_{\ell,i}} \power{u_{\ell,i}}{q}(t_{\ell,i-1}) \cdot \phi(t_{\ell,i-1}) \,\dx
        - \int_{E_{\ell,i}} \power{u_{\ell,i}}{q}(t_{\ell,i}) \cdot \phi(t_{\ell,i}) \,\dx \bigg] \Bigg\vert
    \end{align*}
    in the limit $\epsilon \downarrow 0$.
    To analyze the expression on the right-hand side further, note that $\spt(\phi(t_{\ell,i-1})) \subset E_{\ell,i}$, and that the initial values $u_{\ell,i}^{(0)}$ are chosen according to~\eqref{initial_condition_u_l,i_generarl_domain}, which implies that
    \begin{align*}
        \power{u_{\ell,i}}{q}(t_{\ell,i-1}) \cdot \phi(t_{\ell, i-1})
        &=
        \power{\big(u_{\ell,i}^{(0)}\big)}{q} \cdot \phi(t_{\ell, i-1})
        =
        \power{u_{\ell,i-1}}{q}(t_{\ell,i-1})\chi_{E_{\ell,i}} \cdot \phi(t_{\ell, i-1}) \\
        &=
        \power{u_{\ell,i-1}}{q}(t_{\ell,i-1}) \cdot \phi(t_{\ell, i-1})
    \end{align*}
    for $i\in\{2,\ldots,\ell\}$. 
    Hence, we are dealing with a telescoping sum.
    Taking into account that $u_{\ell,1}(0) \cdot \phi(0)=0 = u_{\ell,\ell}(T) \cdot \phi(T)$, since $\phi \in C_{0}^{\infty}(E, \mathds{R}^{N})$, we conclude that
    \begin{align*}
        \lim_{\epsilon \downarrow 0} R_{\ell}^{(\epsilon)} = 0.
    \end{align*}
    Therefore, passing to the limit $\epsilon \downarrow 0$ in \eqref{time_derivative_estimate_with_remainder}, applying H\"older's inequality and Poincaré's inequality, and using~\eqref{ineq:L^p_estimate_for_derivative} yields
    \begin{align*}
        \bigg\vert \iint_{\Omega_{T}} &\power{u_{\ell}}{q} \cdot \partial_t \phi \,\dx\dt \bigg\vert 
        \le
        c\bigg[ \iint_{\Omega_{T}} \vert Du_{\ell} \vert^{p} + \vert u_{\ell} \vert^{p} +\vert G \vert \,\dx\dt \bigg]^{\frac{1}{p'} }
        \Vert \phi \Vert_{V^{p}(E)} \\
        &\le
        c \bigg[ \iint_{\Omega_{T}} \vert Du_{\ell} \vert^{p} +|Du_\ast|^p+|u_\ast|^p + \vert G \vert \,\dx\dt \bigg]^{\frac{1}{p'} } \Vert \phi \Vert_{V^{p}(E)} \\
        &\leq
        c \bigg[ T \int_{\Omega} \vert Du_{\ast} \vert^{p} + \vert u_{\ast} \vert^{p} + |G| \,\dx
        + \int_\Omega |u_o|^{q+1} + |u_\ast|^{q+1} \,\dx \bigg]^{\frac{1}{p'} }
        \Vert \phi \Vert_{V^{p}(E)},
    \end{align*}
    where $c = c(p,q,\nu,L,\diam(\Omega))$.
    Since $\power{u_\ell}{q} \wto \power{u}{q}$ weakly in $L^\frac{q+1}{q}(\Omega_T,\R^N)$ by \eqref{almost everywhere weakly star convergence with limit}, and since $C_{0}^{\infty}(E, \mathds{R}^{N})$ is dense in the function space $V^{p,0}(E)$ (see Remark~\ref{rem:C_0_inf_is_dense_in_V_p,0}) we conclude that $\partial_t \power{u}{q} \in (V^{p,0}(E))^{\prime}$ with the estimate
    \begin{align*}
        &\Vert   \partial_t \power{u}{q} \Vert_{(V^{p,0}(E))^{\prime}}
        \\ &\le
        c \Big[ T \Big( \Vert u_{\ast} \Vert_{W^{1,p}(\Omega,\R^N)}^p + \Vert G \Vert_{L^1(\Omega)} \Big)
        + \Vert u_o \Vert_{L^{q+1}(\Omega, \mathds{R}^{N})}^{q+1} + \Vert u_{\ast} \Vert_{L^{q+1}(\Omega, \mathds{R}^{N})}^{q+1} \Big]^{\frac{1}{p'} }.
    \end{align*}
This concludes the proof of Theorem~\ref{thm:time_derivative_in_the_dual_space_general_domains}.

\section{Proof of Theorem \ref{continuity_in_time_for_general_domains}}
\label{sec:continuity-proof}
\subsection{An integration by parts formula}
\label{sec:integration-by-parts}
Before we are ready to prove continuity with respect to time, we need to derive the following vital tool.
In the following, the norm of $(\mathcal{V}^{p,0}(E))'$ is defined analogous to \eqref{eq:operator-norm}.
\begin{theorem}\label{thm:ineq_integration_by_parts_formula}
Assume that the domain $E$ satisfies \eqref{one sided growth condition}, and \eqref{p-fat}.
For exponents $p>1$ and $q>0$ satisfying $p\ge\frac{(n+1)(q+1)}{n+q+1}$, we consider functions $u,v \in L^p(0,T;W^{1,p}(\Omega,\R^N)) \cap L^{\infty}(0,T;L^{q+1}(\Omega, \mathds{R}^{N}))$ with $v-u \in \mathcal{V}^{p,0}_q(E)$ and time derivatives $\partial_t \power{u}{q} \in (\mathcal{V}^{p,0}(E))'$ and $\partial_t v \in L^{q+1}(\Omega_T,\R^N)$. Then, the formula
\begin{align}\label{ineq:general_integration_by_parts_formula}
        \iint_{E} \partial_t v \cdot \zeta ( \power{v}{q} - \power{u}{q}) \,\dx\dt \le \langle \partial_t \power{u}{q}, \zeta (v-u) \rangle - \iint_{E} \zeta^{\prime}\, \b[u,v] \,\dx\dt
\end{align}
holds for any non-negative function $\zeta \in C_0^{0,1}((0,T))$.
\end{theorem}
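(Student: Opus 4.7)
The plan is to establish \eqref{ineq:general_integration_by_parts_formula} by regularizing in time via the Landes mollifications $[u]_h,[v]_h$ of \eqref{eq:time_mollification}, applying the classical chain rule for $\b$ in the smooth-in-time regime, and passing to the limit $h\downarrow0$. Formally, using the identities $\partial_t\power{u}{q}\cdot u=q\,\power{u}{q}\cdot\partial_t u$ (an immediate consequence of $\partial_t|u|^{q+1}=(q+1)\power{u}{q}\cdot\partial_t u$) and $\partial_t(\power{u}{q}\cdot v)=\partial_t\power{u}{q}\cdot v+\power{u}{q}\cdot\partial_t v$, a direct computation yields
\begin{align*}
\partial_t\b[u,v]=\partial_t v\cdot\big(\power{v}{q}-\power{u}{q}\big)-\partial_t\power{u}{q}\cdot(v-u),
\end{align*}
so that multiplying by $\zeta\in C^{0,1}_0((0,T))$ and integrating by parts in time would give \eqref{ineq:general_integration_by_parts_formula} as an equality in a fully regular setting. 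The task is to make this rigorous when $\partial_t\power{u}{q}$ is only a distribution and the mollification does not a priori respect the shrinking domain.

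To realize $\langle\partial_t\power{u}{q},\zeta(v-u)\rangle$ concretely, I would first approximate $\zeta(v-u)\in\mathcal{V}^{p,0}_q(E)$ by $\varphi_k\in C^\infty_0(E,\R^N)$ in the $V^{p,0}_q(E)$-norm via Proposition~\ref{prop:C_0_inf_is_dense_in_curly_V_q_p,0}, which applies because \eqref{p-fat} holds by assumption and \eqref{one sided growth condition} implies the weaker \eqref{ineq:one_sided_growth_with_modulus}. For the time mollifications $[u]_h,[v]_h$ (with a convenient initial value such as $v(0)$, which is admissible since $\partial_t v\in L^{q+1}(\Omega_T)$ forces $v\in C^0([0,T];L^{q+1})$; the precise choice is irrelevant since $\zeta$ is compactly supported) the chain rule above holds classically in time, so multiplying by $\zeta$ and integrating by parts produces the pre-limit identity
\begin{align*}
\iint_{\Omega_T}\zeta\,\partial_t[v]_h\cdot\big(\power{[v]_h}{q}-\power{[u]_h}{q}\big)\,\dx\dt
&=\iint_{\Omega_T}\zeta\,\partial_t\power{[u]_h}{q}\cdot\big([v]_h-[u]_h\big)\,\dx\dt\\
&\phantom{=}-\iint_{\Omega_T}\zeta'\,\b\big[[u]_h,[v]_h\big]\,\dx\dt.
\end{align*}
By Lemma~\ref{lem:mollification:estimate_and_continuous_convergence} (using $\partial_t[v]_h=[\partial_t v]_h$ with zero initial value after integration by parts in $s$) and dominated convergence, the first integral on the left and the last one on the right converge to the corresponding terms of \eqref{ineq:general_integration_by_parts_formula}; since $v-u=0$ and $\b[u,u]=0$ on $\Omega_T\setminus E$, the integrals restrict naturally to $E$.

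The inequality in \eqref{ineq:general_integration_by_parts_formula} arises from the remaining duality term. Because $[v]_h(t)-[u]_h(t)$ averages past values of $v-u$ and $E^s$ may contain points outside $E^t$ for $s\le t$, the mollification leaks across $\partial_\mathrm{lat}E$, so that $\zeta([v]_h-[u]_h)$ is not directly an admissible test function against $\partial_t\power{u}{q}\in(\mathcal{V}^{p,0}(E))'$. To handle this, I would introduce the spatial cut-off $\eta_\sigma$ of \eqref{definition_eta_sigma} to restore compact support inside $E^t$, integrate by parts in time to shift the derivative onto $\zeta\eta_\sigma([v]_h-[u]_h)$, pass to the limit $h\downarrow0$ using the $\varphi_k$-approximation to identify the result with $\langle\partial_t\power{u}{q},\zeta\eta_\sigma(v-u)\rangle$, and then send $\sigma\downarrow0$ via Lemma~\ref{weak_convergence_curoff_density_part} to reach $\langle\partial_t\power{u}{q},\zeta(v-u)\rangle$. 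The remainder introduced by the cut-off is supported in a boundary strip whose effective thickness in the time-averaging direction is controlled via the one-sided bound $\power{e}{c}(E^{s},E^{t})\le|\rho(t)-\rho(s)|$ from \eqref{one sided growth condition}. Combining Hölder's inequality in time with exponent $r$ from \eqref{definition_of_r_for_one_sided_growth} with the Gagliardo--Nirenberg embedding of $L^\infty(L^{q+1})\cap L^p(W^{1,p})$ into $L^{p(n+q+1)/n}$, which is available precisely under $p\ge\tfrac{(n+1)(q+1)}{n+q+1}$, I would then show that this remainder vanishes in the joint limit. The one-sided nature of the bound together with the non-negativity $\b[u,v]\ge0$ from Lemma~\ref{lem:technical_lemma} pins down the sign of the defect and forces $\le$ instead of equality.

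The main obstacle I anticipate is precisely this signed-leakage analysis. For a non-decreasing $E$ no leakage occurs and \eqref{ineq:general_integration_by_parts_formula} would hold with equality; under the weaker one-sided control \eqref{one sided growth condition} the asymmetric treatment of past versus future in the mollification both breaks the equality and ties the sign of the defect to the shrinking rate of $E$ encoded by $\rho$. Verifying that the Hölder/Gagliardo--Nirenberg calibration in \eqref{definition_of_r_for_one_sided_growth} exactly offsets the boundary-strip volume in such a way that the residual has the correct sign is, I expect, the central technical point of the argument.
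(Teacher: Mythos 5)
Your overall strategy (time-mollify, use the elementary identity $\partial_t\b[u,v]=\partial_t v\cdot(\power{v}{q}-\power{u}{q})-\partial_t\power{u}{q}\cdot(v-u)$, cut off near $\partial_\mathrm{lat}E$, and extract the inequality from a signed boundary-strip defect controlled by \eqref{one sided growth condition}) is the right intuition, and it is close in spirit to the paper's proof. However, two of your steps fail as stated. First, you regularize $u$ itself by the Landes mollification and work with $\partial_t\power{[u]_h}{q}$; since $\power{[u]_h}{q}\neq\big[\power{u}{q}\big]_h$, the identification of $\iint\zeta\,\partial_t\power{[u]_h}{q}\cdot([v]_h-[u]_h)\,\dx\dt$ with $\langle\partial_t\power{u}{q},\zeta(v-u)\rangle$ is not a routine limit: it requires integrating by parts onto genuinely admissible test functions and a uniform-in-$h$ control of the error in the $V^p(E)$-norm, which is exactly the delicate point. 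The paper sidesteps this by Steklov-averaging the power $\power{u}{q}$ directly (Theorem~\ref{thm:integration_by_parts_compact_spt}), where the duality pairing appears by the very definition of the distributional derivative, and uses the one-sided condition only to show that time-shifted approximations $(\phi_m-\phi_\ell)(\cdot-s)$ remain in $V^{p,q}_{\mathrm{cpt}}(E)$. Second, and more seriously, your plan to ``integrate by parts in time to shift the derivative onto $\zeta\eta_\sigma([v]_h-[u]_h)$'' presupposes that $\eta_\sigma$ is weakly differentiable in time. Under \eqref{one sided growth condition} the domain may expand arbitrarily fast, even with outward jumps, so $t\mapsto\dist(x,\Omega\setminus E^t)$ and hence $\eta_\sigma$ need not even be continuous in $t$; its distributional time derivative can contain singular parts of both signs. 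The paper's key device, absent from your proposal, is the time-regularized cut-off $\eta_{\sigma,h}$ of \eqref{backward_in_time_Steklov_average} (a backward Steklov average raised to the power $\frac1{q+1}$), for which Lemma~\ref{lem:properties_for_eta_sigma_h} gives $\eta_{\sigma,h}^{q+1}\in C^{0,1}(E)$, $\partial_t\eta_{\sigma,h}^{q+1}\ge0$ on $E^{t,2\sigma}$, and the quantitative lower bound $\partial_t\eta_{\sigma,h}^{q+1}\ge-\tfrac1\sigma\tfrac{|\rho(t)-\rho(t-h)|}{h}$; this is precisely where \eqref{one sided growth condition} enters.

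Related to this, your sign argument is asserted rather than established: in your scheme the cut-off's time derivative would hit products such as $\power{[u]_h}{q}\cdot([v]_h-[u]_h)$, which have no sign, so the non-negativity of $\b[u,v]$ by itself does not ``pin down the sign of the defect.'' In the paper the inequality emerges only after the intermediate identity of Theorem~\ref{thm:integration_by_parts_compact_spt} has rearranged all terms so that $\partial_t(\eta^{q+1}\zeta)$ multiplies exactly $\b[u,v]\ge0$; then the interior contribution is dropped by sign and the strip contribution is shown to vanish as $\sigma\downarrow0$. For that last step your list of ingredients is incomplete: Hölder in time with the exponent $r$ of \eqref{definition_of_r_for_one_sided_growth} and Gagliardo--Nirenberg alone cannot absorb the factor $\tfrac1\sigma$; one must first replace $\tfrac{|u-v|}{\sigma}$ by $\tfrac{|u-v|}{\dist(x,\partial E^t)}$ on the strip and apply the $p$-Hardy inequality of Lemma~\ref{lem:p-Hardy}, which is the only place the $p$-fatness assumption \eqref{p-fat} is used in this proof and which your sketch never invokes.
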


\begin{remark}\label{rem:int_by_parts_measure_dens}
By Remark~\ref{rem:V=V} we know that $\mathcal{V}^{p,0}_q(E)=V^{p,0}_q(E)$ if we replace the $p$-fatness condition \eqref{p-fat} by the measure density condition \eqref{ineq:lower_measure_bound}.
Therefore, in this case we obtain the analogous result to Theorem \ref{thm:ineq_integration_by_parts_formula} for the space $V^{p,0}_q(E)$.
\end{remark}

Condition \eqref{one sided growth condition} prevents the domain from shrinking too fast in time.
In contrast, it can increase arbitrarily fast, and even outward jumps are allowed.
Conversely, we can obtain the opposite inequality in the integration by parts formula \eqref{ineq:general_integration_by_parts_formula} under a corresponding condition on the growth of the domain, which is stated in the following corollary.
In particular, if $E$ neither grows nor shrinks too fast (for instance, if $E$ is cylindrical), equality holds in \eqref{ineq:general_integration_by_parts_formula}.
\begin{corollary}
Assume that the assumptions of
Theorem~\ref{thm:ineq_integration_by_parts_formula} hold, except
\eqref{one sided growth condition}, which is replaced by the condition in which the roles of $s$ and $t$ are reversed,
i.e.,
    \begin{align*}
        \power{e}{c}(E^{t}, E^{s}) \le \vert \rho(t) - \rho(s) \vert
        \quad \text{for $0 \le s \le t <T$.}
    \end{align*}
    Then, we have that
    \begin{align}\label{ineq:integration_by_parts_formula_reversed}
    	\iint_{E} \partial_t v \cdot \zeta ( \power{v}{q} - \power{u}{q}) \,\dx\dt
    	\geq
    	\langle \partial_t \power{u}{q}, \zeta (v-u) \rangle
    	- \iint_{E} \zeta^{\prime}\, \b[u,v] \,\dx\dt
    \end{align}
    holds for any non-negative function $\zeta \in C_0^{0,1}((0,T))$.
\end{corollary}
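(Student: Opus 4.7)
The plan is to reduce the reversed inequality \eqref{ineq:integration_by_parts_formula_reversed} to the already-established Theorem~\ref{thm:ineq_integration_by_parts_formula} by means of a time reversal. Define
\begin{align*}
\tilde E &:= \{(x,t) \in \Omega_T : (x,T-t) \in E\}, \quad \tilde E^t = E^{T-t}, \\
\tilde u(x,t) &:= u(x,T-t), \quad \tilde v(x,t) := v(x,T-t), \quad \tilde\rho(t) := \rho(T-t),
\end{align*}
and, for the non-negative cutoff $\zeta \in C^{0,1}_0((0,T))$ appearing in \eqref{ineq:integration_by_parts_formula_reversed}, set $\tilde\zeta(t) := \zeta(T-t)$, which again lies in $C^{0,1}_0((0,T))$ and is non-negative.

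The first step is to check that $\tilde E$, $\tilde u$, $\tilde v$, $\tilde\rho$ fall under the hypotheses of Theorem~\ref{thm:ineq_integration_by_parts_formula}. The assumption \eqref{p-fat} is clearly preserved, since it concerns individual time slices. For the one-sided growth condition, fix $0 \le s \le t < T$. Setting $s' := T-t \le T-s =: t'$, the reversed growth assumption applied to $s' \le t'$ yields
\begin{align*}
\power{e}{c}(\tilde E^s,\tilde E^t)
= \power{e}{c}(E^{T-s},E^{T-t})
= \power{e}{c}(E^{t'},E^{s'})
\le |\rho(t') - \rho(s')|
= |\tilde\rho(t) - \tilde\rho(s)|,
\end{align*}
so $\tilde E$ satisfies \eqref{one sided growth condition} with $\tilde\rho \in W^{1,r}(-1,T)$. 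The integrability and boundary conditions $\tilde v - \tilde u \in \mathcal{V}^{p,0}_q(\tilde E)$ and $\partial_t \tilde v \in L^{q+1}(\Omega_T,\R^N)$ follow directly from the corresponding ones for $u,v$. Finally, the distributional identity $\langle \partial_t \power{\tilde u}{q}, \tilde\varphi \rangle = -\langle \partial_t \power{u}{q}, \varphi\rangle$ with $\varphi(x,s) := \tilde\varphi(x,T-s)$ (obtained from the definition via test functions in $C_0^\infty(\tilde E,\R^N)$ and the change of variable $s=T-t$) shows that $\partial_t \power{\tilde u}{q} \in (\mathcal{V}^{p,0}(\tilde E))'$.

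Applying Theorem~\ref{thm:ineq_integration_by_parts_formula} to $\tilde u$, $\tilde v$, and $\tilde\zeta$ yields
\begin{align*}
\iint_{\tilde E} \partial_t \tilde v \cdot \tilde\zeta\,(\power{\tilde v}{q} - \power{\tilde u}{q}) \,\dx\dt
\le
\langle \partial_t \power{\tilde u}{q}, \tilde\zeta(\tilde v - \tilde u)\rangle
- \iint_{\tilde E} \tilde\zeta'\,\b[\tilde u,\tilde v] \,\dx\dt.
\end{align*}
The substitution $t \mapsto T-t$ now transforms this back to the original time variable. The three sign changes from $\partial_t \tilde v = -(\partial_t v)(x,T-t)$, $\tilde\zeta'(t) = -\zeta'(T-t)$, and the identity for the distributional derivative combine so that every term on both sides acquires an overall minus sign; the inequality itself thus reverses upon multiplication by $-1$, producing exactly \eqref{ineq:integration_by_parts_formula_reversed}. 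The only delicate point is tracking these signs and the reorientation of the duality pairing, but each is straightforward once the time reversal is set up. The case of cylindrical $E$ then gives equality, since both \eqref{one sided growth condition} and its reversed counterpart are satisfied.
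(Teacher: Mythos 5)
Your proposal is correct and is essentially the paper's own proof: the paper likewise defines the time-reflected domain $\widetilde{E}$, the reflected maps $\tilde u,\tilde v,\tilde\zeta$, applies Theorem~\ref{thm:ineq_integration_by_parts_formula} to them, and undoes the reflection, with the sign changes reversing the inequality. If anything, you verify the transfer of the hypotheses (the one-sided growth condition for $\widetilde{E}$ and the sign identity for $\partial_t\power{\tilde u}{q}$) more explicitly than the paper does.
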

\begin{proof}
    Let
    \begin{align*}
        \widetilde{E}:= \bigcup_{t \in (0,T)} E^{T-t} \times \{ t \}
    \end{align*}
    and $\tilde{\zeta}(t):= \zeta(T-t)$.
    Consider the maps $\tilde{u}(x,t):=u(x,T-t), \tilde{v}(x,t):=v(x,T-t) \in L^p(0,T;W^{1,p}(\Omega,\R^N)) \cap L^{\infty}(0,T;L^{q+1}(\Omega, \mathds{R}^{N}))$ with $\tilde{v}-\tilde{u} \in \mathcal{V}^{p,0}_q \big(\widetilde{E} \big)$, $\partial_t \power{\tilde{u}}{q} \in \big(\mathcal{V}^{p,0} \big( \widetilde{E} \big) \big)'$ and $\partial_t \tilde{v} \in L^{q+1} \big( \widetilde{E},\R^N \big)$. Applying Theorem~\ref{thm:ineq_integration_by_parts_formula} to $\tilde{u}, \tilde{v}$ and $\tilde{\zeta}$ yields
\begin{align*}
	\iint_{E} \partial_t v \cdot \zeta ( \power{v}{q} - \power{u}{q}) \,\dx\dt
	&=
	-\iint_{\widetilde{E}} \partial_t \tilde{v} \cdot \tilde{\zeta} ( \power{\tilde{v}}{q} - \power{\tilde{u}}{q}) \,\dx\dt
	\\ &\geq
	-\langle \partial_t \power{\tilde{u}}{q}, \tilde{\zeta} (\tilde{v}-\tilde{u}) \rangle
    + \iint_{E} \tilde{\zeta}^{\prime}\, \b[\tilde{u},\tilde{v}] \,\dx\dt
    \\ &=
    \langle \partial_t \power{u}{q}, \zeta (v-u) \rangle
    -\iint_{E} \zeta^{\prime}\, \b[u,v] \,\dx\dt,
\end{align*}
which finishes the proof.
\end{proof}

Now, we define the forward in-time Steklov average by
\begin{align}\label{Steklov_average(forward_in_time)}
    [ v ]^{S}_h(x,t)
    :=
    \tfrac{1}{h} \int_t^{t+h} v(x,s) \,\ds.
\end{align}
By construction, we have that
\begin{align*}
    \partial_t [ v ]^{S}_h(t) = \tfrac{1}{h} (v(t+h) - v(t)).
\end{align*}
Before proving Theorem~\ref{thm:ineq_integration_by_parts_formula}, we state an intermediate step.

\begin{theorem}
\label{thm:integration_by_parts_compact_spt}
Suppose that the assumptions of Theorem~\ref{thm:ineq_integration_by_parts_formula} are satisfied, except \eqref{one sided growth condition}, which is replaced by the weaker condition \eqref{ineq:one_sided_growth_with_modulus}.
Further, let $\zeta \in C^{0,1}_0((0,T))$ be a non-negative cut-off function in time and let $\eta$ be a non-negative cut-off function such that $\eta^{q+1} \in C^{0,1}(E)$ and there exists $\sigma>0$ such that $\spt(\eta(t)) \subset \overline{E^{t,\sigma}}$ for a.e.~$t \in [0,T]$, with the inner parallel set $E^{t,\sigma}$ defined in~\eqref{def:parallel-set}.
Then, we have that
\begin{align*}
	\big\langle \partial_t \power{u}{q}, \eta^{q+1} \zeta (v-u) \big\rangle
    &=
    \iint_E \partial_t v \cdot \eta^{q+1} \zeta \big( \power{v}{q} - \power{u}{q} \big) \,\dx\dt
    + \iint_E \partial_t \big( \eta^{q+1} \zeta \big) \b[ u,v ] \,\dx\dt.
\end{align*}
\end{theorem}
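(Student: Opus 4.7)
The formula is the rigorous realisation of the pointwise chain rule
\begin{equation*}
    \partial_t \b[u,v] = \partial_t v \cdot (\power{v}{q}-\power{u}{q}) - \partial_t\power{u}{q}\cdot(v-u),
\end{equation*}
which holds for smooth $u,v$ by the computations $\partial_t \tfrac{1}{q+1}|u|^{q+1} = \power{u}{q}\cdot \partial_t u$ and $\partial_t\power{u}{q}\cdot u = q\,\power{u}{q}\cdot \partial_t u$. Multiplying by $\eta^{q+1}\zeta$ and integrating over $E$, integration by parts in time turns the left-hand side into $-\iint_E\partial_t(\eta^{q+1}\zeta)\b[u,v]$ without boundary contributions, because $\eta^{q+1}\zeta$ has compact support in $E$ thanks to $\spt(\eta(t))\subset\overline{E^{t,\sigma}}$ and $\zeta\in C_0^{0,1}((0,T))$. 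Rearranging yields the claimed equality. The real task is to justify this under the given low regularity, where $u$ has no classical time derivative; only $\power{u}{q}$ has one in the dual space, and only $v$ has one in $L^{q+1}$.

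\textbf{Step 1 (admissibility of the test function).} The product $\phi:=\eta^{q+1}\zeta(v-u)$ lies in $\mathcal{V}^{p,0}(E)$ with $\spt(\phi(t))\subset\overline{E^{t,\sigma}}$ for a.e.~$t\in[0,T)$ and compact support in time, so it falls into the class of Lemma~\ref{intermediate_density_argument}. Hence $\phi$ is approximable by $C_0^\infty(E,\R^N)$-functions in the $V^{p,0}_q(E)$-norm, and the duality pairing $\langle \partial_t \power{u}{q},\phi\rangle$ is well-defined.

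\textbf{Step 2 (discrete chain rule at the Steklov level).} For $h>0$ small enough that $\spt(\zeta)\Subset(2h,T-2h)$, introduce $w_h:=[\power{u}{q}]^S_h$ from \eqref{Steklov_average(forward_in_time)}, whose classical time derivative is $\partial_t w_h= h^{-1}(\power{u(\cdot+h)}{q}-\power{u(\cdot)}{q})$. A direct algebraic computation from the definition of $\b$ gives the identity
\begin{align*}
    &\b[u(t+h),v(t+h)] - \b[u(t),v(t)]\\
    &\qquad = -(\power{u(t+h)}{q}-\power{u(t)}{q})\cdot(v(t+h)-u(t+h)) - \b[u(t),u(t+h)]\\
    &\qquad\phantom{=} + \int_t^{t+h}\bigl(\power{v(s)}{q}-\power{u(t)}{q}\bigr)\cdot\partial_s v\,ds,
\end{align*}
the last term being the $v$-chain rule along the interval $[t,t+h]$, which is classical thanks to $\partial_t v\in L^{q+1}$. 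Dividing by $h$, multiplying by $\eta^{q+1}(t)\zeta(t)$, integrating over $E$, and performing a discrete integration by parts on the telescoping piece (legitimate because $\eta^{q+1}\zeta$ is compactly supported in $(0,T)$) yields
\begin{equation*}
    \iint_E\partial_t w_h\cdot \eta^{q+1}\zeta(v(\cdot+h)-u(\cdot+h))\,\dx\dt = \mathrm{I}_h + \mathrm{II}_h + \mathrm{III}_h,
\end{equation*}
where $\mathrm{I}_h$, $\mathrm{II}_h$, $\mathrm{III}_h$ collect, respectively, the difference quotient of $\eta^{q+1}\zeta$ paired against $\b[u,v]$, the residual $-h^{-1}\eta^{q+1}\zeta\,\b[u,u(\cdot+h)]$, and the $\partial_s v$-integral.

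\textbf{Step 3 (passage to the limit and the main obstacle).} The left-hand side converges to $\langle \partial_t\power{u}{q},\eta^{q+1}\zeta(v-u)\rangle$ via Step~1 and the distributional definition of $\partial_t\power{u}{q}$. The term $\mathrm{I}_h$ converges to $\iint_E\partial_t(\eta^{q+1}\zeta)\b[u,v]$ by Lebesgue's differentiation theorem and dominated convergence, using $\power{u}{q}\in L^\infty_tL^{(q+1)/q}_x$ and $v\in L^\infty_tL^{q+1}_x$; the term $\mathrm{III}_h$ converges to $\iint_E\eta^{q+1}\zeta\,\partial_t v\cdot(\power{v}{q}-\power{u}{q})$ by the same tools combined with $\partial_t v\in L^{q+1}$. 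The principal difficulty is the residual $\mathrm{II}_h$: one rewrites
\begin{equation*}
    h^{-1}\b[u(t),u(t+h)] = \tfrac{1}{(q+1)h}\bigl(|u(t+h)|^{q+1}-|u(t)|^{q+1}\bigr) - \power{u(t)}{q}\cdot\tfrac{u(t+h)-u(t)}{h},
\end{equation*}
and shows that the integrals of the two pieces against $\eta^{q+1}\zeta$ share the same limit $-\iint_E \partial_t(\eta^{q+1}\zeta)\tfrac{1}{q+1}|u|^{q+1}$: the first piece by a direct Steklov integration by parts, and the second by exploiting $\partial_t\power{u}{q}\in(\mathcal V^{p,0}(E))'$ tested against $u\eta^{q+1}\zeta$, which is admissible because $u\eta^{q+1}\zeta\in\mathcal V^{p,0}(E)$ by the support condition on $\eta$ and the $p$-fatness hypothesis \eqref{p-fat}. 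The cancellation of these two limits forces $\mathrm{II}_h\to 0$, and rearranging the limiting identity completes the proof.
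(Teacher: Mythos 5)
Your skeleton (Steklov average of $\power{u}{q}$, the algebraic identity for $\b[u(t+h),v(t+h)]-\b[u(t),v(t)]$, discrete integration by parts onto $\eta^{q+1}\zeta$) is the same as the paper's, and the identity in your Step~2 is correct. However, the step you dispose of in one sentence --- ``the left-hand side converges to $\langle\partial_t\power{u}{q},\eta^{q+1}\zeta(v-u)\rangle$ via Step~1 and the distributional definition'' --- is precisely the core difficulty and is not justified by Step~1. The function $\eta^{q+1}\zeta(v-u)$ is not an admissible smooth test function, $u$ has no time derivative in any function space, and $\partial_t\power{u}{q}$ is only known as an element of $(\mathcal{V}^{p,0}(E))'$; to identify $\lim_{h\downarrow0}\iint_E\partial_t[\power{u}{q}]^S_h\cdot\eta^{q+1}\zeta(v-u)\,\dx\dt$ with the duality pairing one must approximate by $\phi_\ell\in C_0^\infty(E,\R^N)$ as in Lemma~\ref{intermediate_density_argument} and control the error $\iint_E\partial_t[\power{u}{q}]^S_h\cdot\big(\eta^{q+1}\zeta(v-u)-\phi_\ell\big)\,\dx\dt$ \emph{uniformly in $h$}. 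This forces one to rewrite such terms as $\tfrac1h\int_0^h\langle\partial_t\power{u}{q},(\phi_m-\phi_\ell)(\cdot-s)\rangle\,\ds$, which only makes sense if the backward time-translates of the test functions remain in $\mathcal{V}^{p,0}(E)$; that is exactly where the hypothesis \eqref{ineq:one_sided_growth_with_modulus} enters, via $\dist(x,\partial E^{t-s})\le\tfrac\sigma2+\omega(s)\le\sigma$ for supports contained in $\overline{E^{t,\sigma}}$. Your proposal never invokes \eqref{ineq:one_sided_growth_with_modulus} at all, which signals that this limit is left unproven.

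The treatment of the residual $\mathrm{II}_h$ is also not viable as written. The splitting of $\tfrac1h\b[u(t),u(t+h)]$ into $\tfrac{1}{(q+1)h}\big(|u(t+h)|^{q+1}-|u(t)|^{q+1}\big)$ minus $\power{u(t)}{q}\cdot\tfrac{u(t+h)-u(t)}{h}$ is correct, and the first piece does converge to $-\tfrac{1}{q+1}\iint\partial_t(\eta^{q+1}\zeta)|u|^{q+1}\,\dx\dt$. But the second piece is a difference quotient of $u$ itself, weighted by $\power{u}{q}$, and it cannot be handled by ``testing $\partial_t\power{u}{q}$ against $u\eta^{q+1}\zeta$'': that pairing corresponds, even formally, to $\tfrac{q}{q+1}\partial_t|u|^{q+1}$, whereas the piece you need corresponds to $\power{u}{q}\cdot\partial_t u=\tfrac{1}{q+1}\partial_t|u|^{q+1}$, and converting between these two formal chain rules when only $\partial_t\power{u}{q}$ (not $\partial_t u$) exists is exactly the nonlinear obstruction the theorem is designed to overcome --- so your argument is circular at this point. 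The paper avoids difference quotients of $u$ entirely: it keeps only $\partial_t[\power{u}{q}]^S_h$, rearranges the $u$-part so that the problematic contribution appears as $\tfrac1h\int_t^{t+h}\b[u(s),u(t)]\,\ds$ paired with $\partial_t(\eta^{q+1}\zeta)$ (with no leftover factor $1/h$), and then uses that this Steklov-type average tends to $\b[u(t),u(t)]=0$ in $L^1(\Omega_T)$. To repair your proof you would need an argument of this kind, or an independent proof that $\tfrac1h\iint\b[u(t),u(t+h)]\,\eta^{q+1}\zeta\,\dx\dt\to0$; neither is supplied in the proposal.
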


\begin{proof}
Let $h >0$ and fix $\zeta \in C^{0,1}_0((0,T))$. Extend the function $u$ to $\Omega \times (T, \infty)$ by zero and consider the Steklov average $\big[\power{u}{q}\big]^{S}_h$.
First, we show that
\begin{align}\label{limit_claim_intermediate_result_proof}
	\lim_{h \downarrow 0} \iint_{E} \partial_t \big[\power{u}{q}\big]^{S}_h \cdot \eta^{q+1}\zeta (v-u) \,\dx\dt
	=
	\big\langle \partial_t \power{u}{q}, \eta^{q+1}\zeta (v-u) \big\rangle.
\end{align}
Indeed, since $\eta^{q+1} \zeta (v-u) \in V^{p,q}_\mathrm{cpt}(E)$ by construction, by Lemma~\ref{intermediate_density_argument} we find a sequence $(\phi_\ell)_{\ell \in \mathds{N}} \subset C_{0}^{\infty}(E, \mathds{R}^{N})$ such that $\phi_\ell \to \eta^{q+1}\zeta (v-u)$ in the norm topology of $V_q^{p,0}(E)$ as $\ell \to \infty$ and $\spt(\phi_\ell (t)) \subset \overline{E^{t,\sigma}}$ for any $t \in [0,T)$ and any $\ell \in \mathds{N}$. 
Since $\spt(\zeta) \Subset (0,T)$, we may further assume that $\spt(\phi_\ell) \subset \Omega \times (\sigma, T - \sigma)$ for any $\ell \in \mathds{N}$.
Now, we consider the difference
\begin{align}\label{eq:difference_integral_and_dual_pair_limit}
    \iint_{E} \partial_t \big[\power{u}{q}\big]^{S}_h &\cdot \eta^{q+1}\zeta (v-u) \,\dx\dt
    - \big\langle \partial_t \power{u}{q}, \eta^{q+1} \zeta (v-u) \big\rangle
    \nonumber \\ &=
    \bigg[ \iint_{E} \partial_t \big[\power{u}{q}\big]^{S}_h \cdot \phi_\ell \,\dx\dt
    - \big\langle \partial_t \power{u}{q}, \phi_\ell \big\rangle \bigg]
   \nonumber \\
   &\phantom{=}
    + \iint_{E} \partial_t \big[\power{u}{q}\big]^{S}_h \cdot ( \eta^{q+1}\zeta (v-u) - \phi_\ell ) \,\dx\dt \nonumber \\
   &\phantom{=}
   + \big\langle \partial_t \power{u}{q}, \phi_\ell - \eta^{q+1}\zeta (v-u) \big\rangle \nonumber \\
   &=: \mathrm{I}_{h, \ell} + \mathrm{II}_{h, \ell} + \mathrm{III}_{\ell},
\end{align}
where the definition of the terms in the last line is apparent from the context.
Now, we analyze the limit $h \downarrow 0$ of each of these terms separately.
First, since $\spt(\phi_\ell) \subset E$, for any fixed $\ell \in \mathds{N}$ we obtain that
\begin{align}\label{eq:weak_convergence_of_steklov_average_of_u_power_q}
    \iint_{E} \partial_t \big[\power{u}{q}\big]^{S}_h \cdot \phi_\ell \,\dx\dt
    &= \iint_{\Omega_{T}} \partial_t \big[\power{u}{q}\big]^{S}_h \cdot \phi_\ell \,\dx\dt
    =
    - \iint_{\Omega_{T}}  \big[\power{u}{q}\big]^{S}_h \cdot \partial_t \phi_\ell \,\dx\dt \nonumber \\
    &\to
    -\iint_{\Omega_{T}}  \power{u}{q} \cdot \partial_t \phi_\ell \,\dx\dt
    =
    - \iint_{E}  \power{u}{q} \cdot \partial_t \phi_\ell \,\dx\dt
    =
    \big\langle \partial_t \power{u}{q} , \phi_\ell \big\rangle
\end{align}
in the limit $h \downarrow 0$.
Hence, we have that
\begin{align}\label{first_limit_intermediate_result_part_int_formula_proof}
    \lim_{h \downarrow 0} \mathrm{I}_{h, \ell} =0
    \quad \text{for any $\ell \in \mathds{N}$.}
\end{align}
Next, we rewrite   
\begin{align*}
    \mathrm{II}_{h, \ell}
    =
    \lim_{m \to \infty} \mathrm{II}_{h, \ell, m}
    :=
    \lim_{m \to \infty}
    \iint_{E} \partial_t \big[\power{u}{q}\big]^{S}_h \cdot ( \phi_{m} - \phi_\ell ) \,\dx\dt.
\end{align*} 
Since we need to pass to the limit $m \to \infty$ in $\mathrm{II}_{h, \ell, m}$ first, we are not able to proceed as before.
Instead, we fix $\ell, m \in \mathds{N}$ and consider $h \in \big(0,  \min\big\{\tfrac\sigma2, \omega^{-1} \big( \frac{\sigma}{2} \big) \big\}\big)$, where $\omega$ is the modulus of continuity given in~\eqref{ineq:one_sided_growth_with_modulus}.
Observing that $\spt(\phi_{k}) \subset \Omega \times (2h,T-2h)$ for any $k \in \mathds{N}$, we compute that
\begin{align*}
	\mathrm{II}_{h, \ell, m}
	&=
	\iint_{\Omega \times (2h,T-2h)} \partial_t \big[\power{u}{q}\big]^{S}_h \cdot ( \phi_{m} - \phi_\ell ) \,\dx\dt \\
    &=
    \tfrac{1}{h} \int_{2h}^{T-2h} \int_{\Omega} [\power{u}{q}(t+h) - \power{u}{q}(t)] \cdot (\phi_{m}(t) - \phi_{\ell}(t)) \,\dx\dt \\
    &=
    \tfrac{1}{h} \int_{2h}^{T-h} \int_{\Omega} \power{u}{q}(t) \cdot [(\phi_{m}-\phi_{\ell})(t-h)-(\phi_{m} - \phi_{\ell})(t)] \,\dx\dt.
\end{align*}
Applying Fubini's theorem yields
\begin{align}\label{II_(h, ell, m)_calculated}
    \mathrm{II}_{h, \ell, m}
    &=
    - \tfrac{1}{h} \int_{2h}^{T-h} \int_{\Omega} \power{u}{q}(t) \cdot \int_{0}^{h} \partial_t (\phi_{m} - \phi_{\ell} )(t-s)\ds \,\dx\dt \nonumber \\
    &=
    - \tfrac{1}{h} \int_{0}^{h} \bigg[ \iint_{\Omega \times (2h,T-h)}
      \power{u}{q}(t) \cdot \partial_t (\phi_{m} - \phi_{\ell})(t-s) \,\dx\dt \bigg] \,\ds \nonumber \\
    &=
    \tfrac{1}{h} \int_{0}^{h} \big\langle \partial_t \power{u}{q},(\phi_{m} - \phi_{\ell})(\cdot-s) \big\rangle \,\ds.
    \end{align}
However, the last line only makes sense if we can guarantee that
$(\phi_{m} - \phi_{\ell})(\cdot-s) \in \mathcal{V}^{p,0}(E)$ for any $s \in [0,h]$.
In fact, we can even show that the function is contained in the
smaller space $V_\mathrm{cpt}^{p,q}(E)$.
For the proof of this claim, let $t \in [0,T)$, $s \in [0,h]$ and $x \in \Omega \setminus E^{t,\frac\sigma2}$.
On the one hand, if $x \in \Omega \setminus E^{t-s}$, we
have that $(\phi_{m} - \phi_{\ell})(x,t-s)=0$ directly.
On the other hand, if $x \in E^{t-s}$, we choose 
a point $y\in\Omega\setminus E^t$ with $|x-y|\le\frac\sigma2$ and use 
condition~\eqref{ineq:one_sided_growth_with_modulus} to estimate
\begin{align*}
    \dist \big( x, \partial E^{t-s} \big)
    &=
      \dist \big( x, \Omega \setminus E^{t-s} \big)\\
  &\leq |x-y|+\dist \big( y, \Omega \setminus E^{t-s} \big)\\
    &\leq
    \tfrac\sigma2+\power{e}{c} \big(E^{t-s}, E^{t} \big)\\
    &\leq
     \tfrac\sigma2+\omega(s)\\
    &\leq
     \tfrac\sigma2+\omega \big( \omega^{-1} \big( \tfrac\sigma2 \big) \big)
    = \sigma.
\end{align*}
This means that $x \in \Omega \setminus E^{t-s,\sigma}$.
Therefore, the choice of $(\phi_\ell)_{\ell\in\N}$ implies that $(\phi_{m} -
\phi_{\ell})(x,t-s)=0$ also holds in the second case.
Combining the two cases, we deduce that $\spt((\phi_{m} -
\phi_{\ell})(\cdot,t-s))\subset \overline{E^{t,\frac{\sigma}{2}}}$ for
every $t\in(0,T]$,
which implies 
$(\phi_{m} - \phi_{\ell})(\cdot-s) \in
V_\mathrm{cpt}^{p,q}(E)\subset\mathcal{V}^{p,0}(E)$ and thus the last line
in~\eqref{II_(h, ell, m)_calculated} is valid.
Recalling that $\mathcal{V}^{p,0}(E)$ is equipped with the norm $\| \cdot \|_{V^p(E)}$, this allows us to estimate
\begin{align*}
    \vert \mathrm{II}_{h, \ell, m} \vert
    &\leq
    \tfrac{1}{h} \int_{0}^{h} \Vert \partial_t \power{u}{q} \Vert_{(\mathcal{V}^{p,0}(E))'}
    \Vert (\phi_{m} - \phi_{\ell})(\cdot-s) \Vert_{V^{p}(E)} \,\ds \\
    &=
    \Vert \partial_t \power{u}{q} \Vert_{(\mathcal{V}^{p,0}(E))'}
    \Vert \phi_{m} - \phi_{\ell} \Vert_{V^{p}(E)}.
\end{align*}
Passing to the limit $m \to \infty$ yields
\begin{align}\label{secondt_limit_intermediate_result_part_int_formula_proof}
    \vert \mathrm{II}_{h, \ell} \vert
    \leq
    \Vert \partial_t \power{u}{q} \Vert_{(\mathcal{V}^{p,0}(E))'}
    \big\Vert \phi_{\ell} -\eta^{q+1}\zeta (v-u) \big\Vert_{V^{p}(E)}
\end{align}
for any $h \in \big(0, \min\big\{\tfrac\sigma2, \omega^{-1}\big( \frac{\sigma}{2} \big) \big\}\big)$ and $\ell \in \mathds{N}$.
Analogously, we estimate $\mathrm{III}_{\ell}$ by
\begin{align}\label{third_limit_intermediate_result_part_int_formula_proof}
    \vert \mathrm{III}_{\ell} \vert
    \leq
    \Vert \partial_t \power{u}{q} \Vert_{(\mathcal{V}^{p,0}(E))'}
    \big\Vert \phi_{\ell} -\eta^{q+1}\zeta (v-u) \big\Vert_{V^{p}(E)}
\end{align}
for any $\ell \in \mathds{N}$.
Inserting \eqref{first_limit_intermediate_result_part_int_formula_proof}, \eqref{secondt_limit_intermediate_result_part_int_formula_proof} and \eqref{third_limit_intermediate_result_part_int_formula_proof} into \eqref{eq:difference_integral_and_dual_pair_limit}, we deduce that
\begin{align*}
    \limsup_{h\downarrow 0}
    \bigg\vert \iint_{E} &\partial_t [\power{u}{q}]^{S}_h \cdot \eta^{q+1} \zeta(v-u) \,\dx\dt
    - \big\langle \partial_t \power{u}{q}, \eta^{q+1}\zeta (v-u) \big\rangle \bigg\vert
    \\ &\leq
    2 \Vert \partial_t \power{u}{q} \Vert_{(\mathcal{V}^{p,0}(E))'}
    \big\Vert \phi_{\ell} -\eta^{q+1}\zeta (v-u) \big\Vert_{V^{p}(E)}
\end{align*}
for any $\ell \in \N$.
Since the second term on the right-hand side vanishes as $\ell \to \infty$, we conclude that \eqref{limit_claim_intermediate_result_proof} holds.

Next, we analyze the integral on the left-hand side of \eqref{limit_claim_intermediate_result_proof} further.
First, since $v-u=0$ a.e.~in $\Omega_T\setminus E$, we may write
\begin{align*}
	\iint_{E} \partial_t \big[\power{u}{q}\big]^{S}_h \cdot \eta^{q+1}\zeta (v-u) \,\dx\dt
	=
	\iint_{\Omega_T} \partial_t \big[\power{u}{q}\big]^{S}_h \cdot \eta^{q+1}\zeta (v-u) \,\dx\dt.
\end{align*}
On the one hand, by the product and chain rules for Sobolev functions and integration by parts we deduce that
\begin{align*}
	\iint_{\Omega_T} &\partial_t [\power{u}{q}]^{S}_h \cdot \eta^{q+1}\zeta v \,\dx\dt
	=	
	-\iint_{\Omega_T} [\power{u}{q}]^{S}_h \cdot \partial_t \big( \eta^{q+1}\zeta v \big) \,\dx\dt
	\\ &=
	\iint_{\Omega_T} \partial_t v \cdot \eta^{q+1}\zeta \big( \power{v}{q} - [\power{u}{q}]^{S}_h \big) \,\dx\dt
	- \iint_{\Omega_T} \eta^{q+1}\zeta (\power{v}{q} \cdot \partial_t v) \,\dx\dt
	\\ & \phantom{=}
	- \iint_{\Omega_T} \partial_t \big( \eta^{q+1} \zeta \big) [\power{u}{q}]^{S}_h \cdot v \,\dx\dt
	\\ &=
	\iint_{\Omega_T}  \partial_t v \cdot \eta^{q+1} \zeta \big( \power{v}{q} - [\power{u}{q}]^{S}_h \big) \,\dx\dt
	- \iint_{\Omega_T} \eta^{q+1} \zeta \partial_t \Big( \tfrac{1}{q+1} |v|^{q+1} \Big) \,\dx\dt
	\\ & \phantom{=}
	- \iint_{\Omega_T} \partial_t \big( \eta^{q+1} \zeta \big) [\power{u}{q}]^{S}_h \cdot v \,\dx\dt
	\\&=
	\iint_{\Omega_T}  \partial_t v \cdot \eta^{q+1} \zeta \big( \power{v}{q} - [\power{u}{q}]^{S}_h \big) \,\dx\dt
	\\ &\phantom{=}
	+ \iint_{\Omega_T} \partial_t \big( \eta^{q+1} \zeta \big) \Big( \tfrac{1}{q+1} |v|^{q+1} - [\power{u}{q}]^{S}_h \cdot v \Big) \,\dx\dt.
\end{align*}
Passing to the limit $h \downarrow 0$, we find that
\begin{align}
	\lim_{h \downarrow 0}
	\iint_{\Omega_T} &\partial_t [\power{u}{q}]^{S}_h \cdot \eta^{q+1} \zeta v \,\dx\dt
	\nonumber \\ &=
	\iint_{\Omega_T} \partial_t v \cdot \eta^{q+1} \zeta \big( \power{v}{q} - \power{u}{q} \big) \,\dx\dt
	\label{eq:aux1-int-by-parts-compact-spt} \\ &\phantom{=}
	+ \iint_{\Omega_T} \partial_t \big( \eta^{q+1} \zeta \big) \Big( \tfrac{1}{q+1} |v|^{q+1} - \power{u}{q} \cdot v \Big) \,\dx\dt.
	\nonumber
\end{align}
On the other hand, we obtain that
\begin{align*}
	&\iint_{\Omega_T}
	\partial_t [\power{u}{q}]^{S}_h \cdot \eta^{q+1}\zeta u \,\dx\dt
	\\ &\phantom{=}
	-\tfrac{q}{q+1}	\iint_{\Omega_T}
	\tfrac{1}{h} \big( \big( \eta^{q+1} \zeta \big)(t-h) - \big( \eta^{q+1} \zeta\big) (t) \big) |u(t)|^{q+1} \,\dx\dt
	\\&=
	\iint_{\Omega_T}
	\tfrac{1}{h} \big( \power{u}{q}(t+h) - \power{u}{q}(t) \big)
	\cdot \big( \eta^{q+1} \zeta \big)(t) u(t) \,\dx\dt
	\\&\phantom{=}
	-\tfrac{q}{q+1}	\iint_{\Omega_T}
	\tfrac{1}{h} \big( \big( \eta^{q+1} \zeta \big)(t-h) - \big( \eta^{q+1} \zeta\big) (t)\big) |u(t)|^{q+1} \,\dx\dt
	\\&=
	\tfrac{q}{q+1} \iint_{\Omega_T}
	\tfrac{1}{h} \big( \eta^{q+1} \zeta \big)(t) |u(t+h)|^{q+1}
	- \tfrac{1}{h} \big( \eta^{q+1} \zeta\big) (t-h) |u(t)|^{q+1} \,\dx\dt
	\\ &\phantom{=}
	- \iint_{\Omega_T} \tfrac{1}{h}
	\Big( \tfrac{1}{q+1} |u(t)|^{q+1} + \tfrac{q}{q+1} |u(t+h)|^{q+1} - \power{u}{q}(t+h) \cdot u(t) \Big) \big( \eta^{q+1} \zeta \big)(t) \,\dx\dt
	\\&=
	\tfrac{q}{q+1} \iint_{\Omega \times (h,T+h)}
	\tfrac{1}{h} \big( \eta^{q+1}\zeta \big)(t-h) |u(t)|^{q+1} \,\dx\dt
	\\&\phantom{=}
	-\tfrac{q}{q+1} \iint_{\Omega_T} 
	\tfrac{1}{h} \big( \eta^{q+1}\zeta \big)(t-h) |u(t)|^{q+1} \,\dx\dt
	\\ &\phantom{=}
	- \iint_{\Omega_T} \tfrac{1}{h}
	\b[u(t+h),u(t)] \big( \eta^{q+1}\zeta \big)(t) \,\dx\dt.
\end{align*}
Since $u(t)=0$ for $t\in(T,T+h)$ and $\zeta(t-h)=0$ for $t\in (0,h)$, the first two terms on the right-hand side of the preceding inequality cancel each other.
Recalling that $\b[u(t),u(t)]=0$, using the definition of the Steklov average and performing an integration by parts in the last term on the right-hand side yields
\begin{align}\label{calculation:appyl_integration_by_parts_toterm_on_right_hand_side}
	\iint_{\Omega_T}
	&\partial_t [\power{u}{q}]^{S}_h \cdot \eta^{q+1} \zeta u \,\dx\dt
	\nonumber \\&\phantom{=}
	-\tfrac{q}{q+1}	\iint_{\Omega_T}
	\tfrac{1}{h} \big( \big( \eta^{q+1} \zeta \big) (t-h) - \big( \eta^{q+1} \zeta \big)(t) \big) |u(t)|^{q+1} \,\dx\dt
	\nonumber \\&=
	- \iint_{\Omega_T}
	\partial_t \bigg( \bint_t^{t+h} \b[u(s),u(t)] \,\ds \bigg) \big( \eta^{q+1} \zeta \big)(t) \,\dx\dt
	\nonumber \\&=
	\iint_{\Omega_T} \partial_t  \big( \eta^{q+1} \zeta\big) (t)
	\bint_t^{t+h} \b[u(s),u(t)] \,\ds \,\dx\dt.
\end{align}
Furthermore, since $u \in L^{q+1}(\Omega_T,\R^N)$, by a standard property of Steklov averages we have that
\begin{align*}
	\bint_t^{t+h} \b[u(s),u(t)] \,\ds
	\to
	\b[u(t),u(t)]=0
	\quad \text{in $L^1(\Omega_T)$ as $h \downarrow 0$.}
\end{align*}
Since we have that $\partial_t \big( \eta^{q+1} \zeta \big) \in L^\infty(0,T)$, the term on the right-hand side of \eqref{calculation:appyl_integration_by_parts_toterm_on_right_hand_side}  vanishes as $h \downarrow 0$ and we obtain that
\begin{align}
	\lim_{h \downarrow 0}
	&\bigg(-\iint_{\Omega_T}
	\partial_t [\power{u}{q}]^{S}_h	\cdot \eta^{q+1} \zeta u \,\dx\dt\bigg)
	=
	\tfrac{q}{q+1} \iint_{\Omega_T} \partial_t \big( \eta^{q+1} \zeta \big) |u|^{q+1} \,\dx\dt.
	\label{eq:aux2-int-by-parts-compact-spt}
\end{align}
Hence, inserting \eqref{eq:aux1-int-by-parts-compact-spt} and \eqref{eq:aux2-int-by-parts-compact-spt} into \eqref{limit_claim_intermediate_result_proof}, recalling the definition of $\b[\cdot,\cdot]$, and using that $v=u$ a.e.~in $\Omega_T \setminus E$ shows that 
\begin{align*}
    \big\langle \partial_t \power{u}{q}, \eta^{q+1} \zeta (v-u) \big\rangle
    =
    \iint_{E} \partial_t v \cdot \eta^{q+1} \zeta ( \power{v}{q} - \power{u}{q}) \,\dx\dt
    + \iint_{E} \partial_t \big( \eta^{q+1} \zeta \big) \b[u,v] \,\dx\dt, 
\end{align*}
which proves the theorem.
\end{proof}

Next, for $\sigma, h >0$, we define a cut-off function based on the backward-in-time Steklov average of $\eta_\sigma$, where $\eta_\sigma$ is given by~\eqref{definition_eta_sigma}.
Indeed, we set
\begin{align}\label{backward_in_time_Steklov_average}
    \eta_{\sigma, h}(x,t)
    :=
    \bigg( \tfrac{1}{h} \int_{t-h}^{t} \eta_{\sigma}(x,s) \,\ds \bigg)^\frac{1}{q+1},
\end{align}
where we extended $\eta_\sigma$ to negative times by zero. 
Similarly as in \cite[Lemmas 5.9]{BDSS18}, we have the following properties of $\eta_{\sigma,h}$.
\begin{lemma}\label{lem:properties_for_eta_sigma_h}
Suppose that $E$ satisfies \eqref{ineq:one_sided_growth_with_modulus} with a modulus of continuity $\omega$, and let $\sigma > 0$ and $h \in \left(0,\omega^{-1} \big(\tfrac{\sigma}{2} \big)\right]$.
Then $\eta_{\sigma,h}$, defined according to \eqref{backward_in_time_Steklov_average}, satisfies
\begin{enumerate}
    \item[(i)] $\eta_{\sigma,h}^{q+1}\in C^{0,1}(E)$;
    \item[(ii)] $\eta_{\sigma,h}(x,t)=0 $ for all $t\in[0,T)$ and $x\in E^{t}\setminus E^{t,\sigma/2}$;
    \item[(iii)] $\partial_t \eta_{\sigma, h}^{q+1} (x,t) \ge 0$ for a.e.~$(x,t) \in E$ with $x \in E^{t,2\sigma}$.
\end{enumerate}
If $E$ fulfills the stronger condition \eqref{one sided growth condition} with $\rho \colon (-1,T)\rightarrow(0,\infty)$ satisfying $\rho \in W^{1,r}(0,T)$ for
$r$ defined according to \eqref{definition_of_r_for_one_sided_growth}, we additionally have that
\begin{enumerate}
    \item[(iv)] $\partial_t\eta_{\sigma,h}^{q+1}(x,t)\geq-\frac{1}{\sigma} \frac{\vert\rho(t)-\rho(t-h)\vert}{h}$ for a.e.~$(x,t)\in E$. 
\end{enumerate}
\end{lemma}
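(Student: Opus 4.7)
The plan is to derive all four properties from the pointwise behavior of the distance function $x\mapsto\dist(x,\Omega\setminus E^t)$ together with the one-sided control on the evolution of the time slices. For (i), I first note that $\tilde\eta$ is non-decreasing and $1$-Lipschitz, that $x\mapsto\dist(x,\Omega\setminus E^t)$ is $1$-Lipschitz for each fixed $t$, and that $\eta_\sigma$ takes values in $[0,1]$. Consequently, $\eta_\sigma$ is $\sigma^{-1}$-Lipschitz in $x$ uniformly in $t$. The backward Steklov average preserves the spatial Lipschitz bound, while its time derivative is bounded in absolute value by $2/h$ because the integrand lies in $[0,1]$. Since $\eta_{\sigma,h}^{q+1}$ equals this Steklov average, assertion (i) follows.

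For (ii), I fix $t\in[0,T)$ and $x\in E^t\setminus E^{t,\sigma/2}$ and show that $\eta_\sigma(x,s)=0$ for every $s\in[t-h,t]$, which will force the Steklov average to vanish at $(x,t)$. Choosing $y_\epsilon\in\Omega\setminus E^t$ with $|x-y_\epsilon|\leq \tfrac{\sigma}{2}+\epsilon$ and invoking \eqref{ineq:one_sided_growth_with_modulus}, I estimate
\[
\dist(x,\Omega\setminus E^s)\leq |x-y_\epsilon|+\dist(y_\epsilon,\Omega\setminus E^s)\leq \tfrac{\sigma}{2}+\epsilon+\omega(t-s).
\]
The assumption $h\leq\omega^{-1}(\sigma/2)$ yields $\omega(t-s)\leq\sigma/2$ for $s\in[t-h,t]$, so letting $\epsilon\downarrow 0$ gives $\dist(x,\Omega\setminus E^s)\leq\sigma$. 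Since $\tilde\eta\equiv 0$ on $(-\infty,1]$, this proves $\eta_\sigma(x,s)=0$.

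For (iii), the crucial ingredient is the a.e.~formula
\[
\partial_t\eta_{\sigma,h}^{q+1}(x,t)=\tfrac{1}{h}\bigl(\eta_\sigma(x,t)-\eta_\sigma(x,t-h)\bigr),
\]
which follows from differentiation of the Steklov average. For $x\in E^{t,2\sigma}$ we have $\dist(x,\Omega\setminus E^t)>2\sigma$ and hence $\eta_\sigma(x,t)=1$; since $\eta_\sigma(x,t-h)\leq 1$ always holds, the difference on the right-hand side is non-negative, proving (iii).

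For (iv), which I expect to be the main obstacle because it is the only place where the stronger condition \eqref{one sided growth condition} plays an essential role, I plan to first establish the pointwise comparison
\[
\dist(x,\Omega\setminus E^{t-h})-\dist(x,\Omega\setminus E^t)\leq |\rho(t)-\rho(t-h)|\quad\text{for every }x\in\Omega.
\]
If $x\notin E^{t-h}$ this is trivial, and if $x\in E^{t-h}$ I pick a near-minimizer $y_\epsilon\in\Omega\setminus E^t$ of $\dist(x,\Omega\setminus E^t)$ and use \eqref{one sided growth condition} to produce $z_\epsilon\in\Omega\setminus E^{t-h}$ with $|y_\epsilon-z_\epsilon|\leq |\rho(t)-\rho(t-h)|+\epsilon$; the triangle inequality then yields the claim after letting $\epsilon\downarrow 0$. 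Because $\tilde\eta$ is non-decreasing and $1$-Lipschitz, this inequality transfers to $\sigma(\eta_\sigma(x,t-h)-\eta_\sigma(x,t))\leq |\rho(t)-\rho(t-h)|$ when the left-hand side is positive, and is trivial otherwise. Dividing by $\sigma h$ and invoking the derivative formula from (iii) gives (iv).
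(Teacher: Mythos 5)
Your proof is correct and follows essentially the same route as the paper: the a.e.\ identity $\partial_t\eta_{\sigma,h}^{q+1}(x,t)=\frac{1}{h}\big(\eta_\sigma(x,t)-\eta_\sigma(x,t-h)\big)$ from Lebesgue differentiation, triangle-inequality estimates against the complementary excess for (ii) and (iv), and the monotonicity and $1$-Lipschitz continuity of $\widetilde{\eta}$. The only differences are cosmetic: you spell out (i) and (iii), which the paper delegates to \cite{BDSS18}, and you work with $\epsilon$-near minimizers of the distance instead of exact nearest points, which yields the same estimates.
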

\begin{proof}
    By definition of $\eta_{\sigma,h}$ and Lebesgue's
      differentiation theorem, we have 
\begin{equation}
  \partial_t\eta_{\sigma,h}^{q+1}(x,t)=\frac{\eta_{\sigma}(x,t)-\eta_{\sigma}(x,t-h)}{h}
  \qquad\mbox{for a.e.~$(x,t)\in E$}.
  \label{eq:derivative}
\end{equation}
The properties (i) and (iii) follow from (\ref{eq:derivative}) as
in \cite[Lemmas 5.9]{BDSS18}.
In particular, they do not depend on the evolution of the set $E$.

Next, fix $x\in E^{t}\setminus E^{t,\sigma/2}$
and $0<h\leq\omega^{-1} \big( \frac\sigma2 \big)$. We wish to show that
\begin{align}\label{eq:eta_sigma_equal_to_0}
\eta_{\sigma}(x,s)=0\quad\text{for any }s\in[t-h,t].
\end{align}
In the case $s\in[-h,0)$ and in the case $s\in[0,T),x\not\in E^{s}$,
the claim is clear from the construction of $\eta_{\sigma}$. Therefore,
suppose that $s\in[0,T)$ and $x\in E^{s}$. Let $y\in\partial E^{t}$
be such that $\left|x-y\right|=\dist(x,\Omega\setminus E^{t})$. Then
we estimate
\begin{align*}
\dist \big( x,\Omega\setminus E^{s} \big) & \leq\left|x-y\right|+\dist \big( y,\Omega\setminus E^{s} \big)\\
 & \leq\dist \big( x,\Omega\setminus E^{t} \big)+\boldsymbol{e}^{c} \big( E^{s},E^{t} \big)\\
 & \leq\tfrac{\sigma}{2}+\omega(t-s)\\
 & \leq\tfrac{\sigma}{2}+\omega \big( \omega^{-1} \big( \tfrac{\sigma}{2} \big) \big)
 \leq\sigma.
\end{align*}
Due to the definition of $\eta_{\sigma}$, this implies \eqref{eq:eta_sigma_equal_to_0} in the remaining case. Furthermore, we obtain (ii) by the definition of $\eta_{\sigma,h}$.

Finally, we establish (iv).
To this end, let $(x,t)\in E$. The claim is clear if $t-h<0$, since in this case we have that $\eta_{\sigma}(x,t)-\eta_{\sigma}(x,t-h)=\eta_{\sigma}(x,t)\geq0$.
Similarly, it is clear if $x\not\in E^{t-h}$. Therefore, consider
$t-h\geq0$ and $x\in E^{t-h}$. Let $y\in\partial E^{t}$ be such
that $\left|x-y\right|=\dist( x,\Omega\setminus E^{t})$. Using that
$x\in E^{t-h}$, $y\in\partial E^{t}\subset\Omega\setminus E^{t}$
and the definition of $\boldsymbol{e}^{c}$, we estimate
\begin{align*}
\dist \big( x,\Omega\setminus E^{t-h} \big) & \leq\left|x-y\right|+\dist \big( y,\Omega\setminus E^{t-h} \big)\\
 & \leq\dist \big( x,\Omega\setminus E^{t} \big) + \boldsymbol{e}^{c} \big( E^{t-h},E^{t} \big)\\
 & \leq\dist \big( x,\Omega\setminus E^{t} \big) + \vert\rho(t-h)-\rho(t)\vert,
\end{align*}
where in the last estimate we used (\ref{one sided growth condition}). Now,
since $\tilde{\eta}$ is nondecreasing and $1$-Lipschitz, using the preceding inequality we obtain that
\begin{align*}
    &\eta_{\sigma}(x,t)-\eta_{\sigma}(x,t-h)
    =
    \tilde{\eta}\left(\frac{\dist \big( x,\Omega\setminus E^{t} \big) }{\sigma}\right)
    - \tilde{\eta}\left(\frac{\dist\big( x,\Omega\setminus E^{t-h} \big)}{\sigma}\right)\\
    & \geq
    \tilde{\eta}\left(\frac{\dist\big( x,\Omega\setminus E^{t} \big)}{\sigma}\right)
    - \tilde{\eta}\left(\frac{\dist \big( x,\Omega\setminus E^{t} \big) + \vert \rho(t-h)-\rho(t) \vert}{\sigma}\right)\\
    & \geq
    -\tfrac{1}{\sigma}|\rho(t-h)-\rho(t)|.
\end{align*}
Thus, by (\ref{eq:derivative}) we arrive at (iv).
\end{proof}

We are now able to prove Theorem~\ref{thm:ineq_integration_by_parts_formula}.
\begin{proof}[Proof of Theorem~\ref{thm:ineq_integration_by_parts_formula}]
Since we know that $\spt(\eta_{\sigma,h}) \subset \overline{E^{t,\frac{\sigma}{2}}}$ and $\eta_{\sigma,h}^{q+1} \in C^{0,1}(E)$ by Lemma~\ref{lem:properties_for_eta_sigma_h}, we are allowed to apply Theorem~\ref{thm:integration_by_parts_compact_spt}, which gives us that
\begin{align}
	\big\langle \partial_t \power{u}{q}, &\eta_{\sigma,h}^{q+1} \zeta (v-u) \big\rangle
	\nonumber \\ &=
    \iint_E \eta_{\sigma,h}^{q+1} \partial_t v \cdot \zeta \big( \power{v}{q} - \power{u}{q} \big) \,\dx\dt
    + \iint_E \eta_{\sigma,h}^{q+1} \zeta^{\prime} \b[ u,v ] \,\dx\dt
    \label{ineq:integration_by_parts_formula_applied_to_eta_sigma_h}
    \\ &\phantom{=}
    + \iint_E \partial_t \eta_{\sigma,h}^{q+1} \zeta \b[u,v] \,\dx\dt
    \nonumber
\end{align}
holds for any non-negative cut-off function in time $\zeta \in C_{0}^{0,1}((0,T))$, $\sigma > 0$ and $h \in \big(0, \omega^{-1} \big( \tfrac{\sigma}{2} \big) \big)$, where $\omega$ denotes the modulus of continuity in \eqref{ineq:one_sided_growth_with_modulus}.
Since we have that $\partial_t \eta_{\sigma, h}^{q+1}(t) \ge 0$ a.e.~in $E^{t,2\sigma}$ by Lemma~\ref{lem:properties_for_eta_sigma_h} (iii), and by using Lemma~\ref{lem:properties_for_eta_sigma_h} (iv) for the last term in \eqref{ineq:integration_by_parts_formula_applied_to_eta_sigma_h}, we deduce that
\begin{align}
    \iint_E \partial_t \eta_{\sigma,h}^{q+1} \zeta \b[u,v] \,\dx\dt
    &\ge
    \int_{0}^{T} \int_{E^t \setminus E^{t,2\sigma}} \partial_t \eta_{\sigma,h}^{q+1} \zeta \b[u,v] \,\dx\dt \nonumber \\
    &\geq -\frac{1}{\sigma}\int_{0}^{T}\int_{E^{t}\setminus
      E^{t,2\sigma}}\frac{\left|\rho(t)-\rho(t-h)\right|}{h}\zeta\b[u,v] \,\dx\dt\nonumber\\
    &\underset{h\downarrow0}{\longrightarrow} -\frac{1}{\sigma}\int_{0}^{T}|\rho'(t)|\int_{E^{t}\setminus
      E^{t,2\sigma}}\b[u,v] \,\dx\dt.
    \label{ineq.estimate_for_last_term_in_integration_by_parts_formula}
\end{align}
In the last step, we used the convergence
$\frac{\left|\rho(t)-\rho(t-h)\right|}{h}\to \rho^\prime(t)$ in
$L^1(0,T)$ as $h\downarrow0$ together with the fact $\b[u,v]\in
L^\infty(0,T;L^1(\Omega))$, which follows from $u,v\in L^{\infty}(0,T;L^{q+1}(\Omega, \mathds{R}^{N}))$.  
Moreover, since we have $u-v \in \mathcal{V}_{q}^{p,0}(E)$, $\zeta \in C^{0,1}_0((0,T))$, $\eta_{\sigma,h} \to \eta_\sigma$ and $D\big(\eta_{\sigma,h}^{q+1}\big) \to D\big( \eta_{\sigma}^{ q+1} \big)$ a.e.~in $\Omega_T$ as $h \downarrow 0$ by Lebesgue's differentiation theorem, and we know that $0 \leq \eta_{\sigma,h} \leq 1$, as well as
\begin{align*}
    \big| D \eta_{\sigma,h}^{q+1}(x,t) \big|
    =
    \bigg| \bint_{t-h}^t D\eta_\sigma(x,s) \,\ds \bigg|
    \leq
    \|D\eta_\sigma\|_{L^\infty(\Omega_T)}
    \leq
    \frac{1}{\sigma}
\end{align*}
a.e.~in $E$ for all $h>0$, by the dominated convergence theorem we conclude that
\begin{align}\label{limit_for_eta_zeta_u}
    \left\{
    \begin{array}{ll}
        \eta_{\sigma, h}^{q+1} u \to \eta_{\sigma} u & \text{in $L^{q+1}(\Omega_T, \mathds{R}^{N})$ as $h \downarrow 0$}, \\[5pt]
        \eta_{\sigma, h}^{q+1} v \to \eta_{\sigma} v & \text{in $L^{q+1}(\Omega_T, \mathds{R}^{N})$ as $h \downarrow 0$}, \\[5pt]
        \eta_{\sigma, h}^{q+1} \zeta (v-u) \to \eta_{\sigma} \zeta (v-u) & \text{in $\mathcal{V}^{p,0}_q(E)$ as $h \downarrow 0$.}
    \end{array}
    \right.
\end{align}
Hence, plugging \eqref{ineq.estimate_for_last_term_in_integration_by_parts_formula} into \eqref{ineq:integration_by_parts_formula_applied_to_eta_sigma_h}, and using \eqref{limit_for_eta_zeta_u} to pass to the limit $h \downarrow 0$, we obtain that
\begin{align}
    \big\langle \partial_t \power{u}{q}, \eta_{\sigma} \zeta (v-u) \big\rangle
    &\geq
   \iint_E \eta_{\sigma} 
    \big( \partial_t v \cdot \zeta \big( \power{v}{q} - \power{u}{q} \big) + \zeta^{\prime} \b[ u,v ] \big) \,\dx\dt
    \label{integration_by_parts_formula_before_taking-the_sigma_limit}
   \\&\phantom{=}
   -\frac{1}{\sigma} \Vert \zeta \Vert_{L^{\infty}}   \int_{0}^{T} \int_{E^t \setminus E^{t,2\sigma}}  \vert  \rho^{\prime}(t) \vert \b[u,v] \,\dx\dt .
   \nonumber
\end{align}
We claim that the last term vanishes in the limit
$\sigma\downarrow0$. For the proof of this claim, we distinguish
between three cases. We begin with the case $p \geq q+1$, in which $r=p'$.
From Lemma \ref{lem:technical_lemma} the estimate
\begin{align}\label{ineq:new_estimate_for_b_term}
    \b[u,v] \le c (|u|+|v|)^{q}|u-v|
\end{align}
follows immediately.
Furthermore, using H\"older's inequality with exponents $p'$ and $p$, taking into account that $\dist \big( x, \partial E^t \big) \le 2
\sigma$ on $E^t \setminus E^{t, 2\sigma}$, and applying Lemma \ref{lem:p-Hardy} yields
\begin{align*}
  \frac{1}{\sigma} &\int_{0}^{T} \int_{E^t \setminus E^{t,2\sigma}} \vert \rho^{\prime}(t) \vert \b[u,v] \,\dx\dt\\
   &\le
   c \int_{0}^{T}  \int_{E^t \setminus E^{t,2\sigma}}  \vert \rho^{\prime}(t) \vert(|u|+|v|)^{q} \frac{|u-v|}{\sigma} \,\dx \, \dt 
   \\ &\leq
   c \Bigg( \int_0^T |\varrho'(t)|^{p'}
    \int_{E^t \setminus E^{t,2\sigma}} (|u|+|v|)^{qp'} \,\dx \dt \Bigg)^\frac{1}{p'}
    \\ &\phantom{=}\quad \cdot
   \Bigg( \int_0^T \int_{E^t} \bigg| \frac{u-v}{\dist(x,\partial E^t)} \bigg|^p \,\dx \, \dt \Bigg)^\frac{1}{p}
   \\ &\leq
   c \Bigg( \int_0^T |\varrho'(t)|^{p'}
    \int_{E^t \setminus E^{t,2\sigma}} (|u|+|v|)^{qp'} \,\dx \dt \Bigg)^\frac{1}{p'}
    \bigg( \iint_{E} | D(u-v)|^p \,\dx \dt \bigg)^\frac{1}{p}.
\end{align*}
Note that the second term on the right-hand side of the preceding inequality is finite, since $u,v \in \mathcal{V}^p(E)$.
Now, we set $F_\sigma \colon (0,T) \to [0,\infty]$,
\begin{align*}
    F_\sigma(t) :=
    |\varrho'(t)|^{p'}
    \int_{E^t \setminus E^{t,2\sigma}} (|u|+|v|)^{qp'} \,\dx .
\end{align*}
Since $\varrho' \in L^{p'}(0,T)$, $u,v \in L^\infty(0,T;L^{q+1}(\Omega,\R^N))$, and $qp'\le q+1$ in the case $p\ge q+1$, we have that $F_\sigma \in L^1(0,T)$ for any $\sigma>0$ with
\begin{align*}
    \sup_{\sigma>0} F_\sigma(t) :=
    |\varrho'(t)|^{p'}
    \int_{\Omega\times\{t\}} (|u|+|v|)^{qp'} \,\dx
    \in L^1(0,T).
\end{align*}
Further, since $x \mapsto (\left|u(x,t)\right|+\left|v(x,t)\right|)^{qp'}\in L^{1} (\Omega)$ for almost every $t \in (0,T)$ and $\left|E^{t}\setminus E^{t,2\sigma}\right| \rightarrow 0$ for every $t \in (0,T)$, we have that $F_\sigma(t)\rightarrow0$ for almost every $t\in(0,T)$.
Hence, by the dominated convergence theorem the first term on the right-hand side in the antepenultimate display formula vanishes as $\sigma \downarrow 0$.

Next, we consider the case $\frac{(n+1)(q+1)}{n+q+1} < p< q+1$, in which we have that
\begin{equation*}
    r=\frac{p(n+q+1)-n(q+1)}{p(n+q+1)-(n+1)(q+1)}.
\end{equation*}
Note that this implies $r'<p<q+1$.
Using \eqref{ineq:new_estimate_for_b_term} and H\"older's inequality
with exponents $r,\frac{pr'}{p-r'}$ and $p$, we estimate
\begin{align*}
  &\frac{1}{\sigma}\int_0^T \int_{E^t\setminus
    E^{t,2\sigma}}|\rho'(t)|\b[u,v] \,\dx \dt\\
  &\qquad\le c
    \int_0^T \int_{E^t\setminus
    E^{t,2\sigma}}|\rho'(t)|\big(|u|+|v|\big)^{\frac{q+1}{r}}\big(|u|+|v|\big)^{\frac{q+1-r'}{r'}}\frac{|u-v|}{\sigma}
    \,\dx \dt\\
  &\qquad\le c
  \bigg(\int_0^T|\rho'(t)|^r \int_{E^t}\big(|u|+|v|\big)^{q+1} \,\dx \,\dt\bigg)^{\frac{1}{r}}\\
   &\qquad\qquad\cdot \bigg(\int_0^T\int_{E^t\setminus
     E^{t,2\sigma}}\big(|u|+|v|\big)^{\frac{p(n+q+1)}{n}} \, \dx \, \dt\bigg)^{\frac{p-r'}{pr'}}\\
  &\qquad\qquad\cdot\Bigg(\int_0^T\int_{E^t}\bigg| \frac{u-v}{\dist(x,\partial E^t)}\bigg|^{p} \,\dx \,\dt\Bigg)^{\frac{1}{p}}.
\end{align*}
In the last step, we used the fact $r'=\frac{p(n+q+1)-n(q+1)}{q+1}$,
which implies that
\begin{equation*}
  \frac{(q+1-r')p}{p-r'}= \frac{p(n+q+1)}{n}.
\end{equation*}
Now, the first integral on the right-hand side of the preceding inequality is finite, since $\rho'\in L^r(0,T)$ and $u,v\in L^\infty(0,T;L^{q+1}(\Omega,\R^N))$. The last integral is finite by
Hardy's inequality, cf.~Lemma~\ref{lem:p-Hardy}. 
Because the Gagliardo--Nirenberg inequality implies $u,v\in L^{\frac{p(n+q+1)}{n}}(\Omega_T)$, the second integral is also finite, and moreover, we have that
\begin{equation*}
  \int_0^T\int_{E^t\setminus
     E^{t,2\sigma}}\big(|u|+|v|\big)^{\frac{p(n+q+1)}{n}}\dx\dt\to0
 \end{equation*}
in the limit $\sigma\downarrow0$.
Thus, the right-hand side of the antepenultimate display formula vanishes in the limit $\sigma\downarrow0$.

Finally, in the borderline case $p= \frac{(n+1)(q+1)}{n+q+1}$ we have that
\begin{align*}
    qp' =
    \frac{qp}{\frac{(n+1)(q+1)}{n+q+1}-1}
    =
    \frac{p(n+q+1)}{n}.
\end{align*}
Therefore, setting $r=\infty$ and $r'=1$, the preceding computations extend to this case, if we interpret $\frac{1}{\infty}$ as zero and replace $\bigg(\int_0^T|\rho'(t)|^r \, \dt\bigg)^{\frac{1}{r}}$ by $\|\varrho\|_{L^\infty(0,T)}$.

Hence, joining the three preceding cases, in any case we have that 
\begin{equation}\label{boundary-term-vanish}
    \lim_{\sigma \downarrow 0} \frac{1}{\sigma} \int_{0}^{T} \int_{E^t \setminus E^{t,2\sigma}} \vert \rho^{\prime}(t) \vert \b[u,v] \,\dx\dt
  = 0.
\end{equation}
Moreover, by Lemma~\ref{weak_convergence_curoff_density_part} and the dominated convergence theorem, we conclude that
\begin{align}\label{sigma_convergence}
    \left\{
    \begin{array}{ll}
        \eta_{\sigma} \zeta (v-u) \wto \zeta (v-u)  & \text{weakly in $\mathcal{V}_{q}^{p,0}(E)$ as $\sigma \downarrow 0$}, \\[5pt]
        \eta_{\sigma}u \to  u & \text{in $L^{q+1}(E,\R^N)$ as $\sigma \downarrow 0$} \\[5pt]
        \eta_{\sigma}v \to  v & \text{in $L^{q+1}(E,\R^N)$ as $\sigma \downarrow 0$}. 
    \end{array}
    \right.
\end{align}
Therefore, passing to the limit $\sigma \downarrow 0$ in
\eqref{integration_by_parts_formula_before_taking-the_sigma_limit} by
means of \eqref{boundary-term-vanish} an~\eqref{sigma_convergence}, we infer
\begin{align*}
    \langle \partial_t \power{u}{q}, \zeta (v-u) \rangle
    \geq
	\iint_E \partial_t v \cdot \zeta \big( \power{v}{q} - \power{u}{q} \big) \,\dx\dt
	+ \iint_E \zeta^{\prime} \b[ u,v ] \,\dx\dt.
\end{align*}
This concludes the proof of the theorem.
\end{proof}

\subsection{Left-sided continuity in time}
\label{sec:left-sided-continuity}
We begin this section with an auxiliary lemma on the continuity in time in cylindrical domains.
\begin{lemma}\label{lem:time-continuity-cylindrical}
Consider exponents $q>0$, $p> \max \Big\{1, \frac{n(q+1)}{n+q+1} \Big\}$, and a function $u\in L^p(0,T;W^{1,p}_0(\Omega,\R^N))\cap L^\infty(0,T;L^{q+1}(\Omega,\R^N))$ such that the power $\power{u}{q}$ possesses a weak time derivative $\partial_t\power{u}{q}\in L^{p'}(0,T;W^{-1,p'}(\Omega,\R^N))$.
Then we have that $u\in C^0([0,T];L^{q+1}(\Omega,\R^N))$.
\end{lemma}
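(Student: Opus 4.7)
The plan is to bound $\|u(t_0) - u(s)\|_{L^{q+1}(\Omega,\R^N)}$ directly in terms of $|t_0 - s|$ by means of the integration by parts formula from Theorem~\ref{thm:ineq_integration_by_parts_formula}, applied with the constant-in-time comparison function $v(x,r):=u(t_0)(x)$. In the cylindrical setting $E=\Omega\times(0,T)$, both~\eqref{one sided growth condition} and its reverse from the subsequent corollary hold trivially with $\rho$ constant, so the integration by parts identity is an equality; moreover, the boundary term in~\eqref{boundary-term-vanish} is zero for free since $\rho'\equiv 0$, so that the proof of the formula carries through without invoking the lower bound $p\geq\frac{(n+1)(q+1)}{n+q+1}$. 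For a.e.~$t_0\in(0,T)$ we have $u(t_0)\in W^{1,p}_0(\Omega,\R^N)$, which makes $v$ admissible: $v-u\in\mathcal{V}^{p,0}_q(E)$ and $\partial_t v\equiv 0\in L^{q+1}(\Omega_T,\R^N)$. The identity thus collapses to
\begin{equation*}
0=\langle\partial_t\power{u}{q},\zeta(u(t_0)-u)\rangle-\iint_E\zeta'(r)\,\b[u,u(t_0)]\,\dx\,\dr
\end{equation*}
for every non-negative $\zeta\in C_0^{0,1}((0,T))$.

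Next, I would choose $\zeta=\zeta_\delta$ to be the piecewise linear Lipschitz approximation of $\chi_{(s,t_0)}$ (for $0<s<t_0<T$), rising affinely on $(s-\delta,s)$ and falling on $(t_0-\delta,t_0)$. Passing to the limit $\delta\downarrow 0$ and using $\b[u(t_0),u(t_0)]=0$ together with Lebesgue's differentiation theorem at a Lebesgue point $s$ of $r\mapsto\int_\Omega\b[u(r),u(t_0)]\,\dx$, this identity simplifies to
\begin{equation*}
\int_\Omega\b[u(s),u(t_0)]\,\dx=\int_s^{t_0}\langle\partial_t\power{u}{q}(r),u(r)-u(t_0)\rangle\,\dr.
\end{equation*}
The right-hand side tends to zero as $s\to t_0$ by H\"older's inequality combined with $\|\partial_t\power{u}{q}\|_{W^{-1,p'}}\in L^{p'}(0,T)$ and $\|u\|_{W^{1,p}_0}\in L^p(0,T)$. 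An analogous cutoff localized on $(t_0,s+\delta)$ handles the case $s>t_0$.

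The final step combines Lemma~\ref{lem:technical_lemma}, which bounds $\int_\Omega\big|\power{u(s)}{(q+1)/2}-\power{u(t_0)}{(q+1)/2}\big|^2\,\dx$ by $c\int_\Omega\b[u(s),u(t_0)]\,\dx$, with Lemma~\ref{lem:technical_lemma_2} and Cauchy--Schwarz. This yields
\begin{equation*}
\int_\Omega|u(s)-u(t_0)|^{q+1}\,\dx\le c(q)\|u\|_{L^\infty(0,T;L^{q+1})}^{(q+1)/2}\big\|\power{u(s)}{(q+1)/2}-\power{u(t_0)}{(q+1)/2}\big\|_{L^2(\Omega)}\to 0
\end{equation*}
as $s\to t_0$, so $u$ is continuous in the $L^{q+1}$-topology at almost every $t_0\in(0,T)$. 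A standard Cauchy-sequence argument then produces a continuous representative $\tilde u\in C^0([0,T];L^{q+1}(\Omega,\R^N))$ coinciding with $u$ at a.e.\ time.

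The main obstacle I anticipate is the rigorous passage to the limit $\delta\downarrow 0$ in the integration by parts formula, which requires checking that $\zeta_\delta(u(t_0)-u)$ is a legitimate test against $\partial_t\power{u}{q}$ and that the Lebesgue differentiation applies to the spatial integral of $\b$ at almost every time. The key technical advantage of the constant-in-time choice $v\equiv u(t_0)$ is that it forces $\partial_t v\equiv 0$, bypassing any further mollification in time and collapsing the integration by parts formula to a clean identity from which the quantitative modulus of continuity follows directly.
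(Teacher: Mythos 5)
Your route is genuinely different from the paper's: the paper takes $v=[u]_h$ (the exponential time mollification), uses the ODE \eqref{eq:ODE_mollification} to discard the term $\iint \partial_t[u]_h\cdot\zeta(\power{[u]_h}{q}-\power{u}{q})\,\dx\dt$ by sign, and derives a bound on $\int_{\Omega\times\{\tau\}}\b[u,[u]_h]\,\dx$ that is \emph{uniform} in $\tau$, so that $\power{[u]_h}{\frac{q+1}{2}}\to\power{u}{\frac{q+1}{2}}$ in $C^0$-norm and continuity on the closed interval follows at once. Your idea of testing with the constant-in-time map $v\equiv u(t_0)$ is attractive and the resulting two-point identity is essentially correct (up to a sign that is harmless), and your observation that the exponent restriction $p\geq\frac{(n+1)(q+1)}{n+q+1}$ in Theorem~\ref{thm:ineq_integration_by_parts_formula} is only used to kill the $\rho'$-boundary term, which is absent in the cylindrical case, matches what the paper itself implicitly uses; you should, however, also say a word about why the cylindrical equality holds without the $p$-fatness hypothesis (density of compactly supported functions in $L^p(0,T;W^{1,p}_0)\cap L^{q+1}$ is classical here, Alt--Luckhaus style), since Lemma~\ref{weak_convergence_curoff_density_part} formally needs \eqref{p-fat}.

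There are two genuine gaps. First, in the limit $\delta\downarrow0$ of $\iint\zeta_\delta'\,\b[u,u(t_0)]\,\dx\dt$ you only invoke Lebesgue differentiation at $s$, but the term $\tfrac1\delta\int_{t_0-\delta}^{t_0}\int_\Omega\b[u(r),u(t_0)]\,\dx\dr$ must also be shown to converge to $\b[u(t_0),u(t_0)]=0$; this is a diagonal condition (the integrand depends on $t_0$ itself) and requires restricting $t_0$ to the Bochner--Lebesgue points of $u$ in $L^{q+1}(\Omega,\R^N)$, using $\b[u(r),u(t_0)]\le c(|u(r)|+|u(t_0)|)^q|u(r)-u(t_0)|$. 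Fixable, but not addressed. Second, and more seriously, the concluding ``standard Cauchy-sequence argument'' does not work as stated: your estimate reads $\int_\Omega\b[u(s),u(t_0)]\,\dx\le\|\partial_t\power{u}{q}\|_{L^{p'}(s,t_0;W^{-1,p'})}\bigl(\|u\|_{L^p(s,t_0;W^{1,p})}+|t_0-s|^{\frac1p}\|u(t_0)\|_{W^{1,p}(\Omega)}\bigr)$, and the last factor is \emph{not} uniformly bounded in $t_0$ (you only know $u\in L^p(0,T;W^{1,p}_0)$), so you obtain continuity at a.e.\ fixed $t_0$ but no uniform modulus on the set of good times, and hence no direct Cauchy property of $u(s)$ as $s\to t$ for an arbitrary (bad) $t$, nor at the endpoints. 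This can be repaired, e.g.\ by choosing, via Chebyshev, an anchor time $r\in(s,s')$ with $|s'-s|^{\frac1p}\|u(r)\|_{W^{1,p}}\le 2^{\frac1p}\|u\|_{L^p(s,s';W^{1,p})}$, applying the two-point estimate with anchor $u(r)$ to both $u(s)$ and $u(s')$, and combining with Lemmas~\ref{lem:technical_lemma_2} and~\ref{lem:technical_lemma} and the triangle inequality; alternatively, the paper's uniform-in-$\tau$ mollification estimate sidesteps the issue entirely. As written, this step is the missing piece of your proof.
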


\begin{proof}
  For $h \in (0,T]$, we consider the time mollification $[u]_h$ according to \eqref{eq:time_mollification} with zero initial values.
  Since $[u]_h\in L^p(0,T;W^{1,p}_0(\Omega,\R^N))\cap L^\infty(0,T;L^{q+1}(\Omega,\R^N))$ with $\partial_t[u]_h\in L^{q+1}(\Omega_T,\R^N)$, by using the integration by parts formula~\eqref{ineq:general_integration_by_parts_formula} (which holds with equality in the cylindrical domain $\Omega_T$) with $v=[u]_h$, we obtain that
  \begin{equation*}
    -\iint_{\Omega_T} \zeta^{\prime}\, \b\big[u,[u]_h \big] \,\dx\dt
    =
    \iint_{\Omega_T}\partial_t[u]_h\cdot\zeta
    \big( \power{[u]_h}{q}-\power{u}{q} \big) \,\dx\dt
    -
    \big\langle \partial_t\power{u}{q},\zeta([u]_h-u) \big\rangle
  \end{equation*}
  for any cut-off function $\zeta\in C^{0,1}_0((0,T))$.
  Now, for arbitrary $\tau\in \left[\frac T2,T\right]$ and $\eps\in \left(0,\frac T4\right)$ we define
  \begin{equation*}
    \zeta(t):=
    \left\{
    \begin{array}{cl}
      \frac{4t}{T},&\mbox{for }0\le t< \frac T4,\\[0.6ex]
      1,&\mbox{for }\frac T4\le t<\tau-\eps,\\[0.6ex]
      \frac{\tau-t}{\eps},&\mbox{for }\tau-\eps\le t< \tau,\\[0.6ex]
      0,&\mbox{for }\tau\le t\le T.
    \end{array}
    \right.
  \end{equation*}
  With this choice of cut-off function, the preceding formula becomes
  \begin{align*}
    \tfrac1\eps\int_{\tau-\eps}^{\tau}\int_{\Omega} \b\big[u,[u]_h\big]\,\dx\dt
    &=
    \iint_{\Omega_T}\partial_t[u]_h\,\cdot \zeta \big(\power{[u]_h}{q}-\power{u}{q}\big) \,\dx\dt
    -
    \big\langle \partial_t\power{u}{q},\zeta([u]_h-u)\big\rangle\\
    &\quad +\tfrac{4}{T}\int_0^{\frac{T}{4}}\int_{\Omega} \b[u,[u]_h]\,\dx\dt.
  \end{align*}
  Due to \eqref{eq:ODE_mollification}, the first term on the right-hand side is nonpositive.
  Letting $\eps\downarrow0$, by Lebesgue's differentiation theorem we therefore obtain the bound
  \begin{align*}
    \int_{\Omega\times\{\tau\}}\b\big[u,[u]_h\big] \,\dx
    &\le
    \big\|\partial_t\power{u}{q}\big\|_{L^{p'}(0,T;W^{-1,p'}(\Omega,\R^N))} \big\|\zeta \big( [u]_h-u \big) \big\|_{L^p(0,T;W^{1,p}_0(\Omega,\R^N))}\\
    &\quad+
      \tfrac{4}{T}\int_0^{\frac{T}{4}}\int_{\Omega} \b\big[u,[u]_h\big] \,\dx\dt
  \end{align*}
  for a.e.~$\tau\in\left[\frac T2,T\right]$. The terms on the right-hand side vanish
  in the limit $h\downarrow0$ since $[u]_h\to u$ in
  $L^p(0,T;W^{1,p}_0(\Omega,\R^N))$ and in $L^{q+1}(\Omega_T,\R^N)$. Using,
  moreover, Lemma~\ref{lem:technical_lemma} in order to bound the
  left-hand side from below, we obtain
  \begin{align*}
   \esssup_{\tau\in\left[\frac T2,T\right]}\,
    \int_{\Omega\times\{\tau\}}\Big|\power{u}{\frac{q+1}{2}}-\power{[u]_h}{\frac{q+1}{2}}\Big|^2\dx
    \to0
    \qquad\mbox{as $h\downarrow0$}.
  \end{align*}
  This implies in particular that
  $\power{[u]_h}{\frac{q+1}{2}} \in C^0\left(\left[\frac T2,T\right],L^2(\Omega,\R^N)\right)$ converges to $\power{u}{\frac{q+1}{2}}$ in the norm of this space as $h\downarrow0$.
  Therefore,
  after choosing a suitable representative for $u$,
  we have that $\power{u}{\frac{q+1}{2}}\in C^0\left(\left[\frac T2,T\right],L^2(\Omega,\R^N)\right)$.
  Now, for $t,\tau\in \left[\frac T2,T\right]$, Lemma~\ref{lem:technical_lemma_2} and the
  Cauchy--Schwarz inequality imply that
  \begin{align*}
    &\int_\Omega|u(t)-u(\tau)|^{q+1}\dx\\
     &\qquad\le
     c(q)\int_\Omega\big(|u(t)|+|u(\tau)|\big)^{\frac{q+1}{2}}
     \Big|\power{u(t)}{\frac{q+1}{2}}-\power{u(\tau)}{\frac{q+1}{2}}\Big|\dx \\
    &\qquad\le
    c(q)\bigg(\int_\Omega\big(|u(t)|+|u(\tau)|\big)^{q+1}\dx\bigg)^{\frac12}
    \bigg(\int_\Omega
    \Big|\power{u(t)}{\frac{q+1}{2}}-\power{u(\tau)}{\frac{q+1}{2}}\Big|^2\dx\bigg)^{\frac12}
    \to0
  \end{align*}
  in the limit $t\to \tau$.
  Consequently, we have $u\in C^0\left(\left[\frac
  T2,T\right],L^{q+1}(\Omega,\R^N)\right)$.
  The analogous argument for $u(x,-t)$ implies $u\in C^0\left(\left[0,\frac T2\right],L^{q+1}(\Omega,\R^N)\right)$ as well, which completes the proof of the lemma. 
\end{proof}

Next, we consider a representative of $u \in L^{\infty}(0,T;L^{q+1}(\Omega, \mathds{R}^{N}))$ such that
\begin{align}\label{limit_of_representative}
	\left\{
	\begin{array}{ll}
	\displaystyle\power{u}{q}(x,\tau) = \liminf_{h \downarrow 0} \tfrac{1}{h} \int_{\tau - h}^{\tau} \power{u}{q}(x,s) \,\ds
	&\text{ for a.e.~$x \in E^{\tau}$}, \\[10pt]
	u(x,\tau) = u_\ast(x)
	&\text{ for a.e.~$x \in \Omega \setminus E^\tau$}
	\end{array}
	\right.
\end{align}
is satisfied for any $\tau \in (0,T)$.
In this situation, Lemma \ref{lem:time-continuity-cylindrical} enables us to prove the following result.
\begin{lemma}\label{lem:integral_limit_for_representative_of_u}
    Let $p \geq \frac{(n+1)(q+1)}{n+q+1}$, and assume that the domain $E$ satisfies \eqref{ineq:lower_measure_bound}, and \eqref{one sided growth condition} with $r$ given by \eqref{definition_of_r_for_one_sided_growth}.
    Consider $u \in V_{q}^{p}(E) \cap L^{\infty}(0,T;L^{q+1}(\Omega, \mathds{R}^{N}))$ with time derivative $\partial_t \power{u}{q} \in (V^{p,0}(E))'$.   
Then any representative of $u$ that satisfies \eqref{limit_of_representative}, also fulfills 
    \begin{align}\label{statement:convergence_of_q+1_integral_of_u}
        \lim_{h \downarrow 0} \tfrac{1}{h} \iint_{\Omega \times (\tau_o -h, \tau_o)} \b[u(t),u_\ast]\,\dx\dt = \int_{\Omega} \b[u(\tau_o),u_\ast] \,\dx
    \end{align}
    for any $\tau_o \in (0,T)$. 
  \end{lemma}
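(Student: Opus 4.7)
The plan is to work with the scalar function $F(t):=\int_\Omega\b[u(t),u_\ast]\,\dx\in L^\infty(0,T)$ and show that its backward Steklov average $F_h(\tau_o):=\bint_{\tau_o-h}^{\tau_o}F(t)\,\dt$ converges, as $h\downarrow0$, to the pointwise value $F(\tau_o)$ produced by the liminf representative \eqref{limit_of_representative}. The structural tool underlying the whole argument is the Fenchel identity
\begin{equation*}
    \b[u,u_\ast]=\Psi(u_\ast)+\Psi^\ast(\power{u}{q})-\power{u}{q}\cdot u_\ast,\qquad\Psi(w):=\tfrac{1}{q+1}|w|^{q+1},\quad\Psi^\ast(p):=\tfrac{q}{q+1}|p|^{(q+1)/q},
\end{equation*}
which displays the integrand of $F$ as the sum of a constant, a convex function of $\power{u}{q}$, and a linear pairing with $u_\ast$. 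I would then split the proof into a lower and an upper semicontinuity estimate at $\tau_o$.

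For the lower bound $F(\tau_o)\le\liminf_{h\downarrow 0}F_h(\tau_o)$ the plan is to combine Jensen's inequality, $\Psi^\ast(\bint_{\tau_o-h}^{\tau_o}\power{u(t)}{q}\,\dt)\le\bint_{\tau_o-h}^{\tau_o}\Psi^\ast(\power{u(t)}{q})\,\dt$, with \eqref{limit_of_representative}, which pins down $\power{u(\tau_o)}{q}$ as the pointwise liminf of the backward Steklov averages a.e.~on $E^{\tau_o}$. On $\Omega\setminus E^{\tau_o}$ the integrand of $F$ vanishes because $u(\tau_o)=u_\ast$ there, while on $E^{\tau_o}$ the weak lower semicontinuity of the convex integral functional $v\mapsto\int\Psi^\ast(v)\,\dx$ on $L^{(q+1)/q}(\Omega)$, together with the uniform boundedness of $\bint\power{u(t)}{q}\,\dt$ in $L^{(q+1)/q}(\Omega)$, handles the nonlinear term. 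Vitali's theorem for the duality pairing $\int u_\ast\cdot\bint\power{u(t)}{q}\,\dt\,\dx$, whose uniform integrability follows from the H\"older bound $L^{q+1}\!\times\!L^{(q+1)/q}$ and the absolute continuity of $\|u_\ast\|_{L^{q+1}}$ on small sets, handles the linear term.

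For the reverse inequality I would invoke Theorem \ref{thm:ineq_integration_by_parts_formula} with the time-independent comparison map $v:=u_\ast$; this is admissible because the measure density condition \eqref{ineq:lower_measure_bound} gives $\mathcal{V}^{p,0}_q(E)=V^{p,0}_q(E)$ via Remark \ref{rem:int_by_parts_measure_dens}, and $u-u_\ast\in V^{p,0}_q(E)$. Since $\partial_tu_\ast=0$ the formula reduces to
\begin{equation*}
    \int_0^T\zeta'(t)F(t)\,\dt\le\langle\partial_t\power{u}{q},\zeta(u_\ast-u)\rangle\qquad\text{for every non-negative }\zeta\in C^{0,1}_0((0,T)).
\end{equation*}
Testing with a two-parameter trapezoidal family supported in $(\tau_o-h-\delta,\tau_o+h'+\delta)$, with linear slopes $\pm1/\delta$ on the outer pieces and $\zeta\equiv1$ on $[\tau_o-h,\tau_o+h']$, and first letting $\delta\downarrow 0$ at Lebesgue points of $F$, one obtains for a.e.~$h,h'$ small the pointwise comparison $F(\tau_o-h)-F(\tau_o+h')\le C\|u_\ast-u\|_{V^p(E\cap\Omega\times[\tau_o-h,\tau_o+h'])}$. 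Averaging in $h$ and $h'$ and exploiting the absolute continuity of the $V^p$-norm of $u_\ast-u$ on shrinking time slabs controls $\limsup_h F_h(\tau_o)$ in terms of the forward Steklov averages at $\tau_o$; combined with the lower bound from Step~1 (applied analogously to the forward Steklov averages) this matches all one-sided means at $\tau_o$ with $F(\tau_o)$.

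The main obstacle lies in closing the upper semicontinuity bound: the integration by parts formula from Theorem~\ref{thm:ineq_integration_by_parts_formula} provides only a one-sided inequality, as dictated by the one-sided growth condition \eqref{one sided growth condition}, and since admissible cut-offs $\zeta$ must vanish at the endpoints of $(0,T)$ a direct Heaviside-type cut-off at $\tau_o$ is ruled out. The two-parameter trapezoidal approximation is designed precisely to circumvent this restriction, and combining it with the convex-duality viewpoint of Step~1 is what ultimately reconciles the pointwise liminf representative of \eqref{limit_of_representative} with the functional Steklov averages on both sides of $\tau_o$.
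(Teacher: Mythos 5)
Your Step 1 is essentially the paper's own lower bound (convexity of $\power{u}{q}\mapsto\b[u,u_\ast]$, Jensen, Fatou), and your use of the integration by parts formula of Theorem~\ref{thm:ineq_integration_by_parts_formula} with $v=u_\ast$ is legitimate. The genuine gap is in the upper bound. Testing with your trapezoidal $\zeta$ and letting $\delta\downarrow0$ at Lebesgue points only yields, for $F(t):=\int_\Omega\b[u(t),u_\ast]\,\dx$, the comparison $F(\tau_o-h)-F(\tau_o+h')\le \|\partial_t\power{u}{q}\|_{(V^{p,0}(E))'}\,\|\chi_{\Omega\times(\tau_o-h-\delta,\tau_o+h'+\delta)}(u-u_\ast)\|_{V^p(E)}$, and after averaging in $h,h'$ this relates the backward mean $\bint_{\tau_o-h}^{\tau_o}F\,\dt$ to the forward mean $\bint_{\tau_o}^{\tau_o+h'}F\,\dt$ up to a vanishing term; it never relates either one-sided mean to the pointwise value $F(\tau_o)$ \emph{from above}. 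Your closing move — ``the lower bound from Step~1 applied analogously to the forward Steklov averages'' — points in the wrong direction: Jensen/Fatou can only give $F(\tau_o)\le\liminf$ of averages, never $\limsup\le F(\tau_o)$, and in any case \eqref{limit_of_representative} fixes the representative through \emph{backward} averages only, so no forward analogue is available. Hence $\limsup_{h\downarrow0}\bint_{\tau_o-h}^{\tau_o}F\,\dt\le F(\tau_o)$ is not obtained and the argument does not close. A telltale sign is that your proof never uses the quantitative shrinking condition \eqref{one sided growth condition} with the exponent $r$ from \eqref{definition_of_r_for_one_sided_growth}, nor the Hardy inequality, although these hypotheses are essential for the statement.

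The paper supplies the missing upper bound by localizing in space rather than in time: with the cutoff $\eta_h$ of \eqref{def:definition_of_eta_h}, supported in $\overline{E^{\tau_o,h_r}}$ with $h_r=h^{1-\frac1r}$, the backward mean is split into a boundary layer $\mathrm{I}_h$ and an interior part $\mathrm{II}_h$. The layer vanishes thanks to \eqref{one sided growth condition}, the embedding $W^{1,r}\subset C^{0,1-\frac1r}$ (which gives $E^t\setminus E^{\tau_o,2h_r}\subset E^t\setminus E^{t,c_oh_r}$ for $t\in(\tau_o-h,\tau_o)$), the $p$-Hardy inequality and, when $p<q+1$, Gagliardo--Nirenberg. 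For $\mathrm{II}_h$ the integration by parts formula is applied to $\eta_h u$ and $v=\eta_h u_\ast$, and — this is the decisive ingredient absent from your proposal — Lemma~\ref{lem:time-continuity-cylindrical} applied to $\eta_h u$ on the interior cylinder $E^{\tau_o,h_r}\times(\tau_o-\eps,\tau_o)$ yields genuine continuity in time there, which is what lets the $\eps$-averages converge to $\int_\Omega\eta_h^{q+1}\b[u(\tau_o),u_\ast]\,\dx$, i.e.\ to the pointwise value at $\tau_o$. Some mechanism of this kind, pinning the time averages to the value at $\tau_o$ itself, is indispensable; comparing backward and forward means with each other cannot replace it.
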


  \begin{proof}
     Since the function $x\mapsto|x|^{\frac{q+1}{q}}$ is convex,
assumption \eqref{limit_of_representative} and Jensen's
inequality imply
\begin{align*}
  \b[u(\tau_o),u_\ast]\le\liminf_{h\downarrow0}\tfrac1h\int_{\tau_o-h}^{\tau_o}\b[u(t),u_\ast]\dt
\end{align*}
a.e.~in $\Omega$. Note that this inequality holds trivially in
$\Omega\setminus E^{\tau_o}$ by
\eqref{limit_of_representative}. Combining the preceding result with Fatou's lemma, we deduce that
\begin{align}\label{liminf_integal_estimate_ofu(tau_o)}
  \int_\Omega \b[u(\tau_o),u_\ast]\,\dx
  \le
  \liminf_{h \downarrow 0} \tfrac{1}{h} \iint_{\Omega \times (\tau_o - h, \tau_o)} \b[u(t),u_\ast] \,\dx\dt.
\end{align}
Hence, it remains to develop the opposite inequality.
To this end, fix $\tau_o \in (0,T)$.
Further, if $q \geq 1$, we let $\widetilde{\eta} \in
C^{0,1}(\mathds{R})$ be the function in \eqref{definition_eta_sigma} and if $q \in (0,1)$, we set
\begin{align*}
	\widetilde{\eta}(s) :=
	\left\{
	\begin{array}{ll}
	0 &\text{for } s \in [0,1], \\[5pt]
	2^\frac{1-q}{q} (s-1)^\frac{1}{q}
	&\text{for } s \in \left(1, \frac{3}{2} \right], \\[5pt]
	1 - 2^\frac{1-q}{q} (2-s)^\frac{1}{q}
	&\text{for } s \in \left( \frac{3}{2}, 2 \right], \\[5pt]
	1 & \text{for } s \in (2,\infty).
	\end{array}
	\right.
\end{align*}
For $h \in (0, \tau_o)$, we define $h_r:=h^{1-\frac 1r}$ and $\eta_h \colon \R^n \to [0,1]$ by letting
\begin{align}
    \eta_{h}(x)
    :=
    \widetilde{\eta} \bigg( \frac{\dist(x, \Omega \setminus E^{\tau_o})}{h_r} \bigg).
    \label{def:definition_of_eta_h}
\end{align}
Then, we have that $\eta_h, \eta_h^q \in C^{0,1}(\Omega)$.
Using that $\eta_h \equiv 1$ in $E^{\tau_o,2h_r}$, we split the integral on the right-hand side of \eqref{liminf_integal_estimate_ofu(tau_o)} into
    \begin{align}\label{split_integral_in_proof_into_two}
        \tfrac{1}{h} \iint_{\Omega \times (\tau_o - h, \tau_o)}
      \b[u(t),u_\ast] \,\dx\dt
      &= \tfrac{1}{h} \int_{\tau_o - h}^{\tau_o} \int_{E^{t} \setminus E^{\tau_o, 2h_r}} \big( 1- \eta_{h}^{q+1} \big) \b[u(t),u_\ast] \,\dx\dt \nonumber \\
        &\phantom{=}
        + \tfrac{1}{h} \int_{\tau_o - h}^{\tau_o} \int_{\Omega} \eta_{h}^{q+1} \b[u(t),u_\ast] \,\dx \dt \nonumber \\
        &=: \mathrm{I}_{h} + \mathrm{II}_{h}. 
    \end{align}
In the following, we will deal with the terms $\mathrm{I}_h$ and $\mathrm{II}_h$, whose definitions are clear from the context, separately.
Let us first consider $\mathrm{I}_h$.
For any $x \in E^{t} \setminus E^{\tau_o, 2h_r}$ with $t \in (\tau_o-h, \tau_o)$, there exists $y \in \Omega \setminus E^{\tau_o}$ such that $\vert x-y \vert = \dist(x, \Omega \setminus E^{\tau_o}) \le 2h_r$.
Thus, by~\eqref{one sided growth condition} and the Sobolev embedding
$W^{1,r}((-1,T))\subset C^{0,1-\frac1r}((-1,T))$  we obtain that   
    \begin{align*}
        \dist(x, \Omega \setminus E^{t}) & \le \vert x-y \vert + \dist(y, \Omega \setminus E^{t} ) \\
        &\le \vert x-y \vert + \power{e}{c}(E^{t}, E^{\tau_o}) \\
        &\le 2h_r + \vert \rho(\tau_o) - \rho(t) \vert \\
        &\le 2h_r + \Vert \rho \Vert_{W^{1,r}((-1,T))}\vert \tau_o - t \vert^{1-\frac{1}{r}} \\
        &\le  2h_r + \Vert \rho
          \Vert_{W^{1,r}((-1,T))}h^{1-\frac{1}{r}}\\
        &= c_o h_r,
    \end{align*}
    with the constant $c_o:=2+\Vert \rho
    \Vert_{W^{1,r}((-1,T))}$.
This means that $E^{t} \setminus E^{\tau_o, 2h_r} \subset E^{t} \setminus E^{t,c_oh_r}$, which enables us to estimate
    \begin{align*}
      \mathrm{I}_{h}
      &\le
        \tfrac{1}{h} \int_{\tau_o - h}^{\tau_o}
        \int_{E^{t} \setminus E^{t,c_oh_r} } \b[u(t),u_\ast] \,\dx\dt\\
      &\le
        \tfrac{c(q)}{h} \int_{\tau_o - h}^{\tau_o}
        \int_{E^{t} \setminus E^{t,c_oh_r} } \big( |u(t)|^q+|u_\ast|^q \big) |u(t)-u_\ast|  \,\dx\dt\\
      &\le
        c(q)\Big( \|u\|_{L^\infty(0,T;L^{q+1}(\Omega,\R^N))}^q
        + \|u_\ast\|_{L^{q+1}(\Omega,\R^N)}^q \Big)
        \\ &\phantom{=} \cdot
        \bigg(\tfrac{1}{h} \int_{\tau_o - h}^{\tau_o} \int_{E^{t} \setminus E^{t,c_oh_r} } |u(t)-u_\ast|^{q+1} \,\dx\dt\bigg)^{\frac{1}{q+1}}.
    \end{align*}
    In the last steps, we used first Lemma~\ref{lem:technical_lemma}
    and then H\"older's inequality. 
In order to estimate the last integral further, we distinguish between two cases.
First, let $p \ge q+1$.
In this case, we have that $r=p'$.
Applying H\"older's inequality, the $p$-Hardy inequality in Lemma \ref{lem:p-Hardy}, and again H\"older's inequality yields
\begin{align*}
    \tfrac{1}{h} \int_{\tau_o - h}^{\tau_o} &\int_{E^{t} \setminus E^{t,c_oh_r}}  \vert u(t) -u_\ast\vert^{q+1} \,\dx\dt \nonumber \\
    &\le
    \big(c_oh_r\big)^{q+1} \tfrac{1}{h} \int_{\tau_o - h}^{\tau_o} \int_{E^t} \bigg( \frac{\vert u(x,t)-u_\ast(x) \vert}{\dist(x, \partial E^{t} )} \bigg)^{q+1} \,\dx\dt \nonumber \\
    &\le
    c_o^{q+1} |\Omega|^{\frac{p-(q+1)}{p}} h^{\frac{q+1-p}{p}}
    \int_{\tau_o -h}^{\tau_o} \bigg( \int_{E^{t}} \bigg( \frac{\vert u(x,t)-u_\ast(x) \vert}{\dist(x, \partial E^{t})} \bigg)^{p} \,\dx \bigg)^{\frac{q+1}{p}} \,\dt \nonumber \\
    &\le
    c\, 
    c_o^{q+1} |\Omega|^{\frac{p-(q+1)}{p}} h^{\frac{q+1-p}{p}}
    \int_{\tau_o - h}^{\tau_o} \bigg( \int_{\Omega} \vert Du-Du_\ast \vert^{p} \,\dx \bigg)^{\frac{q+1}{p}} \dt
    \nonumber \\ &\leq
    c\,
    c_o^{q+1} |\Omega|^{\frac{p-(q+1)}{p}}
    \bigg( \int_{\tau_o - h}^{\tau_o} \int_{\Omega} \vert Du-Du_\ast \vert^{p} \,\dx\dt \bigg)^\frac{q+1}{p}.
\end{align*}
Hence, the right-hand side of the preceding inequality converges to zero as $h \downarrow 0$.
Next, we deal with the cases $\frac{(n+1)(q+1)}{n+q+1} < p < q+1$, where $r=\tfrac{p(n+q+1)-n(q+1)}{p(n+q+1)-(n+1)(q+1)}$, and $p= \frac{(n+1)(q+1)}{n+q+1}$, where $r=\infty$.
By H\"older's inequality with exponents $\frac{p}{r'}$ and $\frac{p}{p-r'}$ (note that $r'<p$ in this parameter range), and Lemma \ref{lem:p-Hardy} we obtain that
\begin{align*}
    \tfrac{1}{h} &\int_{\tau_o-h}^{\tau_o} \int_{E^{t} \setminus E^{t,c_oh_r}}  \vert u-u_\ast \vert^{q+1} \,\dx\dt \\
    &\le
    c\int_{\tau_o-h}^{\tau_o} \int_{E^{t} \setminus E^{t,c_oh_r}} \bigg|\frac{u-u_\ast}{\dist(x,\partial E^t)}\bigg|^{r'} |u-u_\ast|^{q+1-r'} \,\dx\dt \\
    &\le
    c \Bigg( \int_{\tau_o-h}^{\tau_o} \int_{E^t} \bigg|\frac{u-u_\ast}{\dist(x,\partial E^t)}\bigg|^{p} \,\dx\dt\Bigg)^{\frac{r'}{p}}
    \\ &\phantom{=} \cdot
    \bigg(\int_{\tau_o-h}^{\tau_o} \int_{E^t \setminus E^{t,c_oh_r}}|u-u_\ast|^{\frac{p(q+1-r')}{p-r'}} \,\dx\dt\bigg)^{\frac{p-r'}{p}}\\
    &\le
    c\bigg(\int_{\tau_o-h}^{\tau_o} \int_{E^t} |Du - Du_\ast|^p \,\dx\dt\bigg)^{\frac{r'}{p}}
    \\ &\phantom{=} \cdot
    \bigg(\int_{\tau_o-h}^{\tau_o}\int_{E^t \setminus E^{t,c_oh_r}}|u-u_\ast|^{\frac{p(q+1-r')}{p-r'}} \,\dx\dt\bigg)^{\frac{p-r'}{p}},
\end{align*}
which tends to zero in the limit $h\downarrow0$, since $\frac{p(q+1-r')}{p-r'} = p\frac{n+q+1}{n}$ and thus the second integral on the right-hand side of the preceding inequality is finite by the Gagliardo--Nirenberg inequality.
Combining both cases yields
\begin{align}\label{limit_of_first_split_integral}
    \lim_{h \to 0} \mathrm{I}_{h} =0.
\end{align}
Now, we proceed with the second integral $\mathrm{II}_{h}$.
Let $\epsilon_o \in (0,h)$ such that $E^{\tau_o,h_r} \times (\tau_o-2\epsilon_o, \tau_o+2\epsilon_o) \subset E$.
For $\epsilon \in (0,\epsilon_o)$ consider $\zeta \in C_{0}^{0,1}((0,T))$ given by
    \begin{equation*}
    \zeta(t) = \begin{cases} 
    \tfrac{1}{h}(t-\tau_o+h), & \text{for } t \in [\tau_o -h, \tau_o-\epsilon], \\[5pt]
    \Big(\tfrac{1}{\epsilon} - \tfrac{1}{h}\Big)(\tau_o-t), & \text{for } t \in (\tau_o-\epsilon, \tau_o], \\[5pt]
    0, & \text{else}.
    \end{cases}
    \end{equation*}
    First, recall Definition~\eqref{def:definition_of_eta_h}.
    Since $\eta_h^q\in C^{0,1}(\Omega)$ by
      definition, for the time derivative we have $\partial_t \power{(\eta_h u)}{q} = \eta_h^q
      \partial_t \power{u}{q} \in (V^{p,0}(E))'$. Therefore, by Remark \ref{rem:int_by_parts_measure_dens} we may
      apply Theorem~\ref{thm:ineq_integration_by_parts_formula} with
      $u$ replaced by $\eta_h u \in V^{p}_q(E)$ and $v \equiv \eta_h u_\ast$ and deduce the inequality
    \begin{align*}
        \big\langle \partial_t \power{u}{q}, &\zeta \eta_{h}^{q+1} (u - u_\ast) \big\rangle = \big\langle \partial_t  \power{(\eta_{h} u )}{q}, \zeta \eta_{h} (u - u_\ast)  \big\rangle \\
        &\leq -\iint_{E} \zeta^{\prime} \b[\eta_h u, \eta_h u_\ast] \,\dx\dt \\
         &=-\tfrac{1}{h} \int_{\tau_o - h}^{\tau_o} \int_{\Omega} \eta_{h}^{q+1} \b[u,u_\ast] \,\dx\dt + \tfrac{1}{\epsilon} \int_{\tau_o -\epsilon}^{\tau_o} \int_{\Omega} \eta_{h}^{q+1} \b[u,u_\ast] \,\dx\dt.
    \end{align*}
Together with the fact that $0 \leq \zeta \leq 1$, this enables us to establish the estimate
    \begin{align}\label{estimate_for_II_h}
        \mathrm{II}_{h}
        &\le
        \tfrac{1}{\epsilon} \int_{\tau_o - \epsilon}^{\tau_o} \int_{\Omega} \eta_{h}^{q+1} \b[u,u_\ast] \,\dx \dt 
        \\ &\phantom{=}
        +  \big\Vert \partial_t \power{u}{q} \Vert_{(V^{p,0}(E))'} \big\Vert \chi_{\Omega \times (\tau_o-h, \tau_o)} \eta_{h}^{q+1} (u-u_\ast) \big\Vert_{V^{p}(E)}.
        \nonumber
    \end{align}
    Note that in contrast to $\mathrm{II}_h$, the domain of integration on the right-hand side of the preceding inequality is independent of $h$.
    Further, since $E^{\tau_o,h_r} \times (\tau_o-2\epsilon_o, \tau_o+2\epsilon_o) \subset E$ and $u \in V^{p}(E)$, $\partial_t \power{u}{q} \in (V^{p,0}(E))'$, as well as $\spt(\eta_h) \subset \overline{E^{\tau_o,h_r}}$, we obtain that
    \begin{equation*}
	\left\{
	\begin{array}{ll}
		\eta_h u \in L^{p}(\tau_o - \epsilon, \tau_o; W_{0}^{1,p}(E^{\tau_o,h_r},\mathds{R}^{N})) \cap L^{q+1}(\Omega_{T}, \mathds{R}^{N}), \\[5pt]
		\partial_t \power{(\eta_h u)}{q} \in L^{p'}(\tau_o - \epsilon, \tau_o; W^{-1,p'}(E^{\tau_o,h_r},\mathds{R}^{N})).
	\end{array}
	\right.
\end{equation*}
Thus, Lemma~\ref{lem:time-continuity-cylindrical}, applied to $\eta_hu$ on the cylinder $E^{\tau_o,h_r}\times(\tau_o-\eps,\tau_o)$ implies that 
\begin{align}
    \eta_h u \in C^{0}([\tau_o - \epsilon, \tau_o];L^{q+1}(E^{\tau_o,h_r}, \mathds{R}^{N}))
    \label{claim_inclusion}
\end{align}
holds.
In turn, this yields
\begin{align*}
    \lim_{\epsilon \downarrow 0} \tfrac{1}{\epsilon} \int_{\tau_o -
  \epsilon}^{\tau_o} \int_{\Omega} \eta_{h}^{q+1} \b[u,u_\ast]  \,\dx
  \dt
  =
  \int_{\Omega} \eta_{h}^{q+1} \b[u(\tau_o),u_\ast]  \,\dx. 
\end{align*}
Hence, passing to the limit $\epsilon \downarrow 0$ in \eqref{estimate_for_II_h} leads us to
\begin{align}\label{estimate_for_II_h_after_taking_epsilon_limit}
    \mathrm{II}_{h}
    &\leq
    \int_{\Omega} \eta_{h}^{q+1} \b[u(\tau_o),u_\ast] \,\dx
    \\ &\phantom{=}
    + \Vert \partial_t \power{u}{q} \Vert_{(V^{p,0}(E))'} 
    \big\Vert \chi_{\Omega \times (\tau_o-h, \tau_o)} \eta_{h}^{q+1} (u-u_\ast) \big\Vert_{V^{p}(E)}.
    \nonumber   
\end{align}
At this stage, we would like to pass to the limit $h \downarrow 0$.
To this end, observe that by \eqref{def:definition_of_eta_h}, the facts $E^{t} \setminus E^{\tau_o, 2h_r} \subset  E^{t} \setminus E^{t,c_o h_r}$ and $\dist(x, \partial E^{t} ) \le c_o h_r$ for $x \in E^{t} \setminus E^{t,c_o h_r}$, and the $p$-Hardy inequality in Lemma \ref{lem:p-Hardy} it follows that
\begin{align*}
    &\int_{\tau_o-h}^{\tau_o} \int_{E^t} \big\vert D \big( \eta_{h}^{q+1} (u-u_\ast) \big) \big\vert^{p} \,\dx \dt
    \\ &\ \ =
    \int_{\tau_o-h}^{\tau_o} \int_{E^t} \left\vert D
    \left( \widetilde{\eta}^{q+1} \Big( \tfrac{\dist(x, \Omega \setminus E^{\tau_o})}{h_r} \Big) (u-u_\ast) \right) \right\vert^{p} \,\dx \dt
    \\ &\ \ \leq
    \frac{c}{h_r^p} \int_{\tau_o -h}^{\tau_o} \int_{E^{t} \setminus E^{\tau_o,2h_r}} \vert u-u_\ast \vert^{p} \,\dx \dt
    +c \int_{\tau_o -h}^{\tau_o} \int_{\Omega} \vert Du-Du_\ast
         \vert^{p} \,\dx\dt
  \\ &\ \ \leq
       c \int_{\tau_o -h}^{\tau_o} \int_{E^{t} \setminus
       E^{t,c_o h_r}} \bigg(\frac{\vert u-u_\ast \vert}{\dist(x,\partial
       E^t)}\bigg)^p \,\dx \dt
    +c \int_{\tau_o -h}^{\tau_o} \int_{\Omega} \vert Du-Du_\ast
         \vert^{p} \,\dx\dt
    \\ &\ \leq
    c \int_{\tau_o -h}^{\tau_o} \int_{\Omega} \vert Du-Du_\ast \vert^{p} \,\dx\dt.
\end{align*}
Hence, we conclude that
\begin{equation*}
    \big\Vert \chi_{\Omega \times (\tau_o-h, \tau_o)} \eta_{h}^{q+1} (u-u_\ast) \big\Vert_{V^{p}(E)} \to 0
\qquad\mbox{ as $h \downarrow 0$.}
\end{equation*}
Using this in \eqref{estimate_for_II_h_after_taking_epsilon_limit} gives us that
\begin{align}\label{limsup_of_second_integral}
    \limsup_{h \downarrow 0} \mathrm{II}_{h} \le \int_{\Omega} \b[ u(\tau_o),u_\ast] \,\dx.
\end{align}
Finally, by combining \eqref{split_integral_in_proof_into_two}, \eqref{limit_of_first_split_integral} and \eqref{limsup_of_second_integral}, we obtain that
\begin{align*}
    \limsup_{h \downarrow 0} \tfrac{1}{h} \int_{\tau_o-h}^{\tau_o}
  \int_{E^t} \b[u(t),u_\ast] \,\dx\dt
  \le
  \int_{\Omega} \b[ u(\tau_o),u_\ast] \,\dx.
\end{align*}
Together with \eqref{liminf_integal_estimate_ofu(tau_o)}, this gives us the claim of the lemma.
\end{proof}

At this stage, we are finally ready to prove the left-sided continuity in time of variational solutions that possess a weak time derivative.
In particular, this applies to the solution constructed in Theorem \ref{thm:existence_in_general_domains} under the assumptions of Theorem \ref{continuity_in_time_for_general_domains}.
\begin{lemma}\label{lem:left-sided_continuity}
Let $p \geq \frac{(n+1)(q+1)}{n+q+1}$,
let $E$ be a noncylindrical domain fulfilling \eqref{ineq:lower_measure_bound}, and \eqref{one sided growth condition} with $r$ given by \eqref{definition_of_r_for_one_sided_growth}, and let $\tau_o \in (0,T)$ be arbitrary. Then for any representative of $u \in V^p_q(E) \cap L^{\infty}(0,T, L^{q+1}(\Omega, \mathds{R}^{N}))$ with $\partial_t \power{u}{q} \in (V^{p,0}(E))'$ satisfying \eqref{limit_of_representative} we have that
    \begin{align}\label{left_continuity_limit}
        \lim_{\tau \uparrow \tau_o} \Vert u(\tau) - u(\tau_o) \Vert_{L^{q+1}} = 0.
    \end{align}
\end{lemma}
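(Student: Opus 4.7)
The plan is to reduce the strong convergence \eqref{left_continuity_limit} to the combination of the norm convergence $\|u(\tau)\|_{L^{q+1}(\Omega,\R^N)}\to\|u(\tau_o)\|_{L^{q+1}(\Omega,\R^N)}$ and the weak convergence $u(\tau)\wto u(\tau_o)$ in $L^{q+1}(\Omega,\R^N)$ as $\tau\uparrow\tau_o$, and then to invoke the Radon--Riesz property of the uniformly convex space $L^{q+1}$. By expanding
\[
    \b[u(\tau),u_\ast]=\tfrac{1}{q+1}|u_\ast|^{q+1}+\tfrac{q}{q+1}|u(\tau)|^{q+1}-\power{u(\tau)}{q}\cdot u_\ast,
\]
the norm convergence will itself be obtained from the convergence of $\varphi(\tau):=\int_\Omega\b[u(\tau),u_\ast]\,\dx$ together with a weak continuity statement for $\power{u(\cdot)}{q}$ tested against $u_\ast$.

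\textbf{Norm convergence via integration by parts.} I will apply Theorem \ref{thm:ineq_integration_by_parts_formula} with the time-independent choice $v=u_\ast$, so that $\partial_t v\equiv 0$ and the formula collapses to
\[
    \iint_E\zeta'(t)\b[u,u_\ast]\,\dx\dt\le\langle \partial_t\power{u}{q},\zeta(u_\ast-u)\rangle
\]
for any non-negative $\zeta\in C_0^{0,1}((0,T))$. The next step is to plug in a Lipschitz approximation of $\chi_{(\tau,\tau_o)}$ with slopes $\pm 1/h$ on intervals of length $h$ near the two endpoints and to let $h\downarrow 0$, using Lemma \ref{lem:integral_limit_for_representative_of_u} at both $\tau$ and $\tau_o$ to identify the resulting Lebesgue averages and dominated convergence (based on $u-u_\ast\in V^{p,0}_q(E)$ via Remark \ref{rem:V=V}) to pass to the limit in the duality pairing. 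This yields
\[
    \varphi(\tau)-\varphi(\tau_o)\le\|\partial_t\power{u}{q}\|_{(V^{p,0}(E))'}\,\|\chi_{(\tau,\tau_o)}(u-u_\ast)\|_{V^p(E)},
\]
and the right-hand side vanishes as $\tau\uparrow\tau_o$ by absolute continuity of the $V^p$-integral. Combined with the weak continuity of $\power{u(\cdot)}{q}$ tested against $u_\ast$ (provided by the next step) and weak $L^{q+1}$-lower semicontinuity, this will deliver $\|u(\tau)\|_{L^{q+1}}\to\|u(\tau_o)\|_{L^{q+1}}$.

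\textbf{Weak convergence and main obstacle.} For any sequence $\tau_k\uparrow\tau_o$, the uniform $L^\infty(L^{q+1})$-bound on $u$ extracts a subsequence with $u(\tau_{k_j})\wto\bar u$ in $L^{q+1}(\Omega,\R^N)$. To identify $\bar u=u(\tau_o)$, I use that for every test function $\phi\in C_c^\infty(E^{\tau_o,\delta},\R^N)$ with $\delta>0$ small (such $\phi$, trivially extended in time, lie in $C_c^\infty(E,\R^N)\subset V^{p,0}(E)$ thanks to the growth condition \eqref{one sided growth condition}), the hypothesis $\partial_t\power{u}{q}\in(V^{p,0}(E))'$ forces the map $t\mapsto\int_\Omega\power{u(t)}{q}\cdot\phi\,\dx$ to lie in $W^{1,p'}([\tau_o-\delta,\tau_o])$ and hence to be continuous at $\tau_o$. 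A density argument, together with the identity $u(\tau)\equiv u_\ast$ on $\Omega\setminus E^\tau$ (which takes care of the region where $\phi$ need not be compactly supported), will identify $\bar u=u(\tau_o)$ and will transfer the weak convergence to $\power{u(\tau)}{q}\wto\power{u(\tau_o)}{q}$ in $L^{(q+1)/q}(\Omega,\R^N)$. Feeding this back into the previous step yields the norm convergence, and the Radon--Riesz property then upgrades weak plus norm convergence to the desired strong convergence \eqref{left_continuity_limit}. The principal difficulty is extending the weak continuity of $\power{u(\cdot)}{q}$ from smooth, compactly supported test functions inside $E^{\tau_o}$ to the boundary-including pairings against $u_\ast$: the boundary strip $E^{\tau_o}\setminus E^{\tau_o,\delta}$ will be controlled via the $p$-Hardy inequality (Lemma \ref{lem:p-Hardy}) in combination with the growth condition \eqref{one sided growth condition}, following the template of the proof of Lemma \ref{lem:integral_limit_for_representative_of_u}.
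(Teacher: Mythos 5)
Your first step (the one-sided bound $\limsup_{\tau\uparrow\tau_o}\int_\Omega\b[u(\tau),u_\ast]\,\dx\le\int_\Omega\b[u(\tau_o),u_\ast]\,\dx$, obtained from Theorem~\ref{thm:ineq_integration_by_parts_formula} with $v=u_\ast$, cut-offs approximating $\chi_{(\tau,\tau_o)}$ and Lemma~\ref{lem:integral_limit_for_representative_of_u} at both endpoints) is sound and close in spirit to how the paper treats its boundary term. The genuine gap lies in the second half, on which the whole Radon--Riesz scheme rests. To run it you need \emph{global} statements on $\Omega$: $u(\tau)\wto u(\tau_o)$ weakly in $L^{q+1}(\Omega,\R^N)$ and $\int_\Omega\power{u(\tau)}{q}\cdot u_\ast\,\dx\to\int_\Omega\power{u(\tau_o)}{q}\cdot u_\ast\,\dx$. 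Your test-function argument (which, incidentally, needs the relative openness of $E$ together with compactness of $\overline{E^{\tau_o,\delta}}$ to place $\phi$ times a time cut-off in $C_0^\infty(E)$ -- condition \eqref{one sided growth condition} only controls forward-in-time shrinking and gives nothing for earlier times) only identifies the weak limit of $\power{u(\tau)}{q}$ on compact subsets of $E^{\tau_o}$, and even there one must reconcile the continuous representative of $t\mapsto\int\power{u(t)}{q}\cdot\phi\,\dx$ with the representative fixed by \eqref{limit_of_representative}, which is defined only through a pointwise $\liminf$ of averages and cannot be pushed through the spatial integral against a sign-changing $\phi$ by Fatou. The proposed repair of the boundary strip via the $p$-Hardy inequality ``following the template of Lemma~\ref{lem:integral_limit_for_representative_of_u}'' does not work: in that lemma the Hardy argument is applied to quantities averaged in time over $(\tau_o-h,\tau_o)$, and the smallness comes from the absolute continuity of $\iint|Du-Du_\ast|^p\,\dx\dt$; for individual time slices $\tau_k\uparrow\tau_o$ there is no uniform bound on $\|Du(\tau_k)\|_{L^p(\Omega)}$ (only $Du\in L^p(\Omega_T)$), so the strip contributions $\int_{E^{\tau_k}\setminus E^{\tau_k,\delta}}|u(\tau_k)-u_\ast|^{q+1}\,\dx$ cannot be made small uniformly in $k$. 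Finally, the sentence ``will transfer the weak convergence to $\power{u(\tau)}{q}\wto\power{u(\tau_o)}{q}$'' is unjustified: $w\mapsto\power{w}{q}$ is nonlinear, and weak convergence of $u(\tau_k)$ neither implies nor follows from weak convergence of $\power{u(\tau_k)}{q}$ without strong or a.e.\ convergence, which is exactly what is to be proved.

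For comparison, the paper avoids needing any weak convergence of time slices up to the boundary: it splits $\|u(\tau)-u(\tau_o)\|_{L^{q+1}}$ with a spatial cut-off $\eta_h$ supported well inside $E^{\tau_o}$, handles the interior part by the cylindrical continuity result (Lemma~\ref{lem:time-continuity-cylindrical}, applicable on a small cylinder $E^{\tau_o,h}\times[\tau_o-\eps,\tau_o]\subset E$ by openness), and controls the boundary part by applying the integration by parts inequality to the \emph{localized} function $(1-\eta_h)u$ with comparison map $(1-\eta_h)u_\ast$, which together with Lemma~\ref{lem:integral_limit_for_representative_of_u} transfers the boundary mass at time $\tau$ to time $\tau_o$, i.e.\ $\limsup_{\tau\uparrow\tau_o}\int_\Omega(1-\eta_h)^{q+1}\b[u(\tau),u_\ast]\,\dx\le\int_\Omega(1-\eta_h)^{q+1}\b[u(\tau_o),u_\ast]\,\dx$; the latter is then made small by letting $h\downarrow0$ with dominated convergence at the single fixed time $\tau_o$. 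If you want to salvage your route, you would need a mechanism of this localized, one-sided type to obtain the uniform-in-$\tau$ boundary control; the fixed-time Hardy argument cannot supply it.
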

\begin{proof}
Fix arbitrary $\tau \in (0, \tau_o)$ and $h>0$, let $\eta_{h}$ again be defined according to \eqref{def:definition_of_eta_h}, and estimate
    \begin{align}
        \Vert u(\tau) - u(\tau_o) \Vert_{L^{q+1}(\Omega, \mathds{R}^{N})} 
        &\le
        \Vert \eta_h u(\tau) - \eta_h u(\tau_o) \Vert_{L^{q+1}(\Omega, \mathds{R}^{N})} \nonumber \\
        &\phantom{=}
        + \Vert  (1- \eta_h) (u(\tau) - u_\ast) \Vert_{L^{q+1}(\Omega, \mathds{R}^{N})} \nonumber \\
        &\phantom{=}
        + \Vert  (1- \eta_h) (u(\tau_o) - u_\ast) \Vert_{L^{q+1}(\Omega, \mathds{R}^{N})} \nonumber \\
        &=: \mathrm{I}_{h}(\tau) + \mathrm{II}_{h}(\tau) + \mathrm{III}_{h}.
        \label{ineq:triangle_inequality_for_difference_u(tau)_and_u(tau_o)}
    \end{align}
    First, since the domain $E$ is relatively open, for any $h>0$ there exists $\epsilon > 0$, such that $E^{\tau_o,h} \times [\tau_o-\epsilon, \tau_o] \subset E$.
    Thus, by the same argument as for \eqref{claim_inclusion} in the proof of Lemma \ref{lem:integral_limit_for_representative_of_u}, we obtain that
    \begin{align}\label{eq:limit_I_h}
        \lim_{\tau \uparrow \tau_o} \mathrm{I}_{h} = 0.
    \end{align}
    Next, for $0 < \lambda < \min\{ \tau, \tau_o-\tau\}$, consider the cut-off function $\xi_{\lambda} \in C_{0}^{0,1}((0,T))$ such that
    \begin{equation*}
    \xi_{\lambda}(t) =
    \left\{
    \begin{array}{ll} 
    \tfrac{1}{\lambda}(t-\tau+\lambda), & \text{for } t \in [\tau-\lambda, \tau), \\[5pt]
    1, & \text{for } t \in [\tau, \tau_o - \lambda), \\[5pt]
    \tfrac{1}{\lambda}(\tau_o - t), & \text{for } t \in [\tau_o-\lambda, \tau_o), \\[5pt]
    0, & \text{else}.
    \end{array}
    \right.
    \end{equation*}
    Since $(1-\eta_h)^q \in C^{0,1}(E)$ and thus $\partial_t\power{[(1-\eta_h)u]}{q} = (1-\eta_h)^q \partial_t\power{u}{q} \in (V^{p,0}(E))'$, taking Remark \ref{rem:int_by_parts_measure_dens} into account, we are allowed to use the integration by parts formula from Theorem~\ref{thm:ineq_integration_by_parts_formula} with $(1-\eta_h)u$ in
    place of $u$, $v\equiv(1-\eta_h)u_\ast$, and $\xi_\lambda$ as the cut-off function in time, with the result
    \begin{align}
      \tfrac1\lambda \int_{\tau-\lambda}^\tau \int_\Omega
      &(1-\eta_h)^{q+1}\b[u,u_\ast]\,\dx\dt
      \nonumber \\ &\le
      \tfrac1\lambda\int_{\tau_o-\lambda}^{\tau_o} \int_\Omega
      (1-\eta_h)^{q+1}\b[u,u_\ast]\,\dx\dt
      \label{Int_by_parts} \\ &\phantom{=}+
      \big\langle \partial_t \power{[(1-\eta_h)u]}{q}, \xi_\lambda(1-\eta_h)(u_\ast-u) \big\rangle.
      \nonumber
    \end{align}
    The last term can be estimated by
    \begin{align*}
      \big\langle &\partial_t \power{[(1-\eta_h)u]}{q}, \xi_\lambda(1-\eta_h)(u_\ast-u) \big\rangle
      \\ &=
      \big\langle \partial_t \power{u}{q}, \xi_\lambda(1-\eta_h)^{q+1}(u_\ast-u) \big\rangle\\
      &\le
      \big\| \partial_t\power{u}{q} \big\|_{(V^{p,0}(E))'}
      \big\| \chi_{\Omega\times(\tau-\lambda,\tau_o)}(1-\eta_h)^{q+1}(u_\ast-u) \big\|_{V^{p,0}(E)}.
    \end{align*}
    Moreover, we use
    Lemma~\ref{lem:integral_limit_for_representative_of_u} with $u$ replaced by $(1-\eta_h)u$ and boundary values $(1-\eta_h)u_\ast \in W^{1,p}(\Omega,\R^N) \cap L^{q+1}(\Omega,\R^N)$ in the definition of the space $V^p_q(E)$ to pass to the limit
    $\lambda\downarrow0$ in~\eqref{Int_by_parts} and arrive at
    \begin{align*}
      \int_\Omega &(1-\eta_h)^{q+1}\b[u(\tau),u_\ast]\,\dx\\
      &\le
         \int_\Omega(1-\eta_h)^{q+1}\b[u(\tau_o),u_\ast]\,\dx\\
       &\phantom{=}+
        \big\| \partial_t\power{u}{q} \big\|_{(V^{p,0}(E))'}
        \big\|\chi_{\Omega\times(\tau,\tau_o)}(1-\eta_h)^{q+1}(u_\ast-u) \big\|_{V^{p,0}(E)}. 
    \end{align*}
    Next, we let $\tau\uparrow\tau_o$ and observe that the last term
    vanishes in the limit. Therefore, we obtain that
    \begin{align*}
      \limsup_{\tau\uparrow\tau_o}\int_\Omega(1-\eta_h)^{q+1}\b[u(\tau),u_\ast]\,\dx
      \le
      \int_\Omega(1-\eta_h)^{q+1}\b[u(\tau_o),u_\ast]\,\dx.
    \end{align*}
    Now, we recall the definition of $\mathrm{II}_h(\tau)$ and use in turn Lemma~\ref{lem:technical_lemma_2}, H\"older's inequality, and Lemma~\ref{lem:technical_lemma}.
    In this way, we deduce that
    \begin{align*}
      \big[\mathrm{II}_h(\tau)\big]^{q+1}
      &\le
        c\int_\Omega (1-\eta_h)^{q+1} \Big( |u(\tau)|^{\frac{q+1}{2}} + |u_\ast|^{\frac{q+1}{2}} \Big) \Big| \power{u(\tau)}{\frac{q+1}{2}} - \power{u_\ast}{\frac{q+1}{2}} \Big|\,\dx\\
      &\le
        c\bigg(\int_\Omega (1-\eta_h)^{q+1} \big(|u(\tau)|^{q+1}+|u_\ast|^{q+1}\big)\,\dx\bigg)^{\frac12}
        \\ &\phantom{=} \cdot
        \bigg(\int_\Omega (1-\eta_h)^{q+1} \Big| \power{u(\tau)}{\frac{q+1}{2}} - \power{u_\ast}{\frac{q+1}{2}} \Big|^2 \,\dx\bigg)^{\frac12}\\
      &\le
        c\Big(\|u\|_{L^\infty(0,T;L^{q+1}(\Omega,\R^N))} + \|u_\ast\|_{L^{q+1}(\Omega,\R^N)}\Big)^{\frac{q+1}{2}}
        \\ &\phantom{=} \cdot
        \bigg(\int_\Omega(1-\eta_h)^{q+1}\b[u(\tau),u_\ast]\,\dx\bigg)^{\frac12}.
    \end{align*}
    Combining the two preceding formulas, we obtain that
    \begin{align*}
      \limsup_{\tau\uparrow\tau_o} \mathrm{II}_h(\tau)
      &\le
      c \Big(\|u\|_{L^\infty(0,T;L^{q+1}(\Omega,\R^N))} + \|u_\ast\|_{L^{q+1}(\Omega,\R^N)}\Big)^{\frac{1}{2}}
      \\ &\phantom{=} \cdot
      \bigg(\int_\Omega(1-\eta_h)^{q+1}\b[u(\tau_o),u_\ast]\,\dx\bigg)^{\frac1{2(q+1)}}.
    \end{align*}
    Using this estimate together with \eqref{eq:limit_I_h}
    in \eqref{ineq:triangle_inequality_for_difference_u(tau)_and_u(tau_o)} yields
\begin{align*}
    \limsup_{\tau \uparrow \tau_o}{} &\Vert u(\tau) - u(\tau_o) \Vert_{L^{q+1}(\Omega, \mathds{R}^{N})}
    \\ &\le
    c \Big(\|u\|_{L^\infty(0,T;L^{q+1}(\Omega,\R^N))} + \|u_\ast\|_{L^{q+1}(\Omega,\R^N)}\Big)^{\frac{1}{2}}
    \\ &\phantom{=} \cdot
    \bigg(\int_\Omega(1-\eta_h)^{q+1}\b[u(\tau_o),u_\ast]\,\dx\bigg)^{\frac1{2(q+1)}}
    \\ &\phantom{=} +
    \Vert  (1- \eta_h) (u(\tau_o) - u_\ast) \Vert_{L^{q+1}(\Omega, \mathds{R}^{N})}.    
\end{align*}
Note that the right-hand side of the preceding inequality vanishes as $h \downarrow 0$ by the dominated convergence theorem, since $(1-\eta_h)\to0$ a.e.~in $E^{\tau_o}$, and $u(\tau_o)=u_\ast$ a.e.~in $\Omega\setminus E^{\tau_o}$, as well as $u(\tau_o) \in L^{q+1}(\Omega,\R^N)$ for any $\tau_o \in (0,T)$ by \eqref{limit_of_representative}.
This proves the claim of the lemma.
\end{proof}

\subsection{Localization in time}
\label{sec:localization}
We develop a localized version of the variational inequality on the noncylindrical subdomains $E \cap (\Omega \times (\tau_o, \tau))$.
\begin{lemma}
    Let $p \geq \frac{(n+1)(q+1)}{n+q+1}$, and assume that $f$ is a variational integrand satisfying \eqref{eq:integrand}, and that $E$ a noncylindrical domain satisfying \eqref{ineq:lower_measure_bound}, and \eqref{one sided growth condition} with $r$ given by \eqref{definition_of_r_for_one_sided_growth}.
    Then, any solution to the variational inequality \eqref{eq:variational_inequality} with $\partial_t \power{u}{q} \in (V^{p,0}(E))'$ also fulfills the localized variational inequality 
    \begin{align}
        \iint_{E\cap (\Omega \times (\tau_o,\tau))} &f(x,u,Du) \,\dx\dt
        + \int_{E^\tau}	\b[u(\tau),v(\tau)] \,\dx
	\nonumber \\ & \leq
	\iint_{E\cap (\Omega \times (\tau_o,\tau))} f(x,v,Dv) + \partial_t v \cdot \big( \power{v}{q} - \power{u}{q} \big) \,\dx\dt
    \label{ineq:localized_variational_inequality} \\ &\phantom{=}
	+\int_{E^{\tau_o}} \b[u(\tau_o),v(\tau_o)] \,\dx
    \nonumber
    \end{align}
    for any $0 \le \tau_o < \tau <T$ and any comparison map $v \in V_{q}^{p}(E \cap (\Omega \times
     (\tau_o-\epsilon_o,\tau)))$ satisfying $\partial_t v \in
     L^{q+1}(\Omega \times (\tau_o-\epsilon_o,\tau), \R^N)$ for
     some $\epsilon_o>0$.
\end{lemma}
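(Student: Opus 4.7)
The plan is to derive the localized inequality by testing the global variational inequality \eqref{eq:variational_inequality} with a specifically constructed comparison map $\tilde v$. The construction aims at making $\tilde v$ coincide with $v$ on $[\tau_o, \tau]$ and with a time regularization of $u$ on $[0, \tau_o]$, so that on $[\tau_o,\tau]$ the desired target terms of the localized inequality appear directly, on $[0, \tau_o - \epsilon]$ the contributions on both sides cancel, and on a thin transition layer $[\tau_o - \epsilon, \tau_o]$ the boundary term $\int_{E^{\tau_o}}\b[u(\tau_o),v(\tau_o)]\,\dx$ emerges as $\epsilon \downarrow 0$. Concretely, for small $\epsilon, h > 0$, I would choose a cut-off $\zeta_\epsilon \in C^{0,1}([0,T))$ with $\zeta_\epsilon \equiv 1$ on $[\tau_o, \tau]$, $\zeta_\epsilon \equiv 0$ on $[0, \tau_o - \epsilon]$, and linearly interpolating in between, and take $\tilde v_{h, \epsilon} := [u]_h + \zeta_\epsilon(v - [u]_h)$, where $[u]_h$ denotes a Landes-type time regularization of $u$ with $[u]_h(0) = u_o$.

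Plugging $\tilde v_{h, \epsilon}$ into \eqref{eq:variational_inequality} and splitting the time integration at $\tau_o$, the $[\tau_o, \tau]$ part produces exactly the desired terms of the target inequality, including the boundary contribution $\int_{E^\tau}\b[u(\tau), v(\tau)]\,\dx$. On $[0, \tau_o - \epsilon]$, where $\tilde v_{h, \epsilon} = [u]_h$, the $f$-integral converges to $\iint f(x,u,Du)\,\dx\dt$ as $h \downarrow 0$ and cancels the matching LHS contribution, while the time-derivative term $\partial_t[u]_h \cdot (\power{[u]_h}{q} - \power{u}{q}) = -\tfrac{1}{h}([u]_h - u) \cdot (\power{[u]_h}{q} - \power{u}{q})$ is pointwise non-positive by monotonicity of $\xi \mapsto \power{\xi}{q}$ and may be safely discarded. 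The initial boundary contribution $\int_{E^0}\b[u_o,[u]_h(0)]\,\dx$ vanishes by the choice $[u]_h(0) = u_o$. The main work lies in the transition layer $[\tau_o - \epsilon, \tau_o]$, where $\partial_t \tilde v_{h, \epsilon}$ contains a singular term $\zeta_\epsilon'(v - [u]_h)$ of order $1/\epsilon$; to identify the limit of the resulting integral I would apply Theorem \ref{thm:ineq_integration_by_parts_formula} on the subdomain $E \cap \Omega_{\tau_o}$ with the cut-off $\zeta_\epsilon$, which precisely converts the singular contribution into $\int_{E^{\tau_o}}\b[u(\tau_o), v(\tau_o)]\,\dx$. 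Passing to the limit $\epsilon \downarrow 0$ uses the left-sided continuity of $u$ in $L^{q+1}$ at $\tau_o$ established in Lemma \ref{lem:left-sided_continuity}, which in particular yields the left-continuity of $\tau \mapsto \int_{E^\tau}\b[u(\tau), v(\tau)]\,\dx$ at $\tau_o$.

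The principal obstacle I anticipate is that the Landes mollification $[u]_h$ generally fails to satisfy the boundary condition $[u]_h = u_\ast$ on $\Omega \setminus E^t$, since time averaging may pick up nonzero values from earlier time slices $E^s$ with $s < t$ that are larger than $E^t$. To circumvent this, in the spirit of the proofs in Section~\ref{sec:left-sided-continuity}, I would first multiply $u - u_\ast$ by a spatial cut-off $\eta_{\sigma, h}$ in the style of \eqref{backward_in_time_Steklov_average} that vanishes near the lateral boundary of $E$, then apply the time mollification, and finally send $\sigma, h \downarrow 0$ while exploiting the one-sided growth condition \eqref{one sided growth condition} together with Hardy's inequality (Lemma \ref{lem:p-Hardy}) to absorb the resulting boundary error terms produced by the spatial cut-off.
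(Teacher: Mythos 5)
Your overall architecture (a time cut-off concentrating at $\tau_o$, the integration by parts formula of Theorem~\ref{thm:ineq_integration_by_parts_formula} to convert the singular transition-layer term into $\int_{E^{\tau_o}}\b[u(\tau_o),v(\tau_o)]\,\dx$, and Lemma~\ref{lem:left-sided_continuity} to identify the limit) matches the paper's proof. The genuine gap lies in how you fill the interval $[0,\tau_o]$: you use the Landes mollification $[u]_h$ from \eqref{eq:time_mollification}. This map is not an admissible comparison function in a shrinking noncylindrical domain, and your proposed repair does not fix it. The obstruction is not the lateral boundary at nearby times but the infinite memory of \eqref{eq:time_mollification}: at time $t$ it averages over \emph{all} $s\in[0,t]$, and condition \eqref{one sided growth condition} only controls the shrinkage $\boldsymbol{e}^{c}(E^{s},E^{t})$ by $|\rho(t)-\rho(s)|$, which is small only for $s$ close to $t$. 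Hence even after multiplying $u-u_\ast$ by a spatial cut-off supported in $\overline{E^{s,\sigma/2}}$, the mollified function picks up (exponentially damped but nonzero) contributions from far-past slices $E^{s,\sigma/2}\not\subset E^t$, so it does not vanish outside $E^t$ and the comparison map leaves $V^p_q(E)$. Replacing the Landes mollification by a finite-window Steklov average restores the support property (this is exactly the mechanism in Lemma~\ref{lem:properties_for_eta_sigma_h}), but then $\partial_t[u]^S_h=\frac1h(u(t+h)-u(t))$ no longer satisfies the ODE \eqref{eq:ODE_mollification}, and your argument that the time-derivative term on $[0,\tau_o-\epsilon]$ is pointwise non-positive and can be discarded collapses; conversely, cutting off with the time-dependent $\eta_\sigma(t)$ \emph{after} mollifying destroys $\partial_t \tilde v\in L^{q+1}$, since $t\mapsto\dist(x,\Omega\setminus E^t)$ may jump when the domain expands. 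So the two admissibility requirements and the sign structure you rely on pull in opposite directions, and none of the variants you sketch satisfies all three.

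A second, related problem is the claimed cancellation of the $f$-energies on $[0,\tau_o-\epsilon]$ once the spatial cut-off is inserted: with $w_\sigma:=u_\ast+\eta_\sigma(u-u_\ast)$ you would need $\limsup_{\sigma\downarrow0}\iint f(x,w_\sigma,Dw_\sigma)\,\dx\dt\le\iint f(x,u,Du)\,\dx\dt$, i.e.\ convergence of the energies; Hardy's inequality (Lemma~\ref{lem:p-Hardy}) only bounds the cut-off error $D\eta_\sigma\otimes(u-u_\ast)$ in $L^p$, and weak convergence as in Lemma~\ref{weak_convergence_curoff_density_part} gives lower semicontinuity in the \emph{wrong} direction, so this step needs a separate argument. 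The paper sidesteps both issues at once: it approximates $u-u_\ast$ on $\Omega_\tau$ by smooth, compactly supported $\phi_\ell$ via the strong density result Corollary~\ref{cor:C_0_inf_is_dense_in_V_q_p,0}, uses the comparison map $w_{\epsilon,\ell}=u_\ast+(1-\xi_\epsilon)\phi_\ell+\xi_\epsilon(v-u_\ast)$ (automatically admissible, with $\partial_t w_{\epsilon,\ell}\in L^{q+1}$), treats the time-derivative term with Theorem~\ref{thm:ineq_integration_by_parts_formula} and a second cut-off $\zeta_\epsilon$, cancels the initial term via Remark~\ref{rem:initial_condition_convergence}, and finally lets $\ell\to\infty$ using the strong convergence $\phi_\ell+u_\ast\to u$ in $V^p_q$ together with the local Lipschitz estimate \eqref{ineq:Lipschitz_condition} — which is precisely what makes the $f$-terms cancel. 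To salvage your route you would essentially have to re-prove this density/approximation machinery inside the proof, so the time-mollification ansatz as written does not go through.
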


\begin{proof}
Let $\tau_o > 0$ and $\tau \in (\tau_o, T)$.
Since $u-u_\ast \in V^{p,0}_q(E \cap \Omega_\tau)$, by Corollary \ref{cor:C_0_inf_is_dense_in_V_q_p,0} there exists a sequence $(\phi_\ell)_{\ell \in \N}$ of smooth functions $\phi_\ell \in C^\infty_0(E \cap \Omega_\tau,\R^N)$ such that $\phi_\ell \to u-u_\ast$ strongly in $V^{p,0}_q(E \cap \Omega_\tau)$ as $\ell \to \infty$.
Now, we fix $\ell \in \N$ and let $\epsilon_\ell\in \big( 0,\frac13\tau_o \big)$ be so small that $\spt(\phi_\ell) \subset \Omega \times (\epsilon_\ell,\tau-\epsilon_\ell)$.
For $0<\epsilon< \min \big\{ \eps_\ell, \frac12\epsilon_o \big\}$, we introduce the cut-off functions in time
\begin{equation*}
	\zeta_{\epsilon}(t) =
	\left\{
	\begin{array}{cl}
		\frac{t}{\epsilon}, & \text{for } t \in [0, \epsilon), \\[5pt]
		1, & \text{for } t \in [\epsilon, \tau_o - \epsilon], \\[5pt]
		\frac{\tau_o - t}{\epsilon}
		& \text{for } t \in (\tau_o - \epsilon, \tau_o), \\[5pt]
		0, &\text{for } t \in [\tau_o, \tau],
	\end{array}
	\right.
\end{equation*}
and
\begin{equation*}
	\xi_{\epsilon}(t) =
	\left\{
	\begin{array}{cl}
		0, & \text{for } t \in [0, \tau_o-2\eps], \\[5pt]
		\frac{t-\tau_o + 2\epsilon}{\epsilon}, & \text{for } t \in (\tau_o - 2\epsilon, \tau_o - \epsilon), \\[5pt]
		1, & \text{for } t \in [\tau_o - \epsilon, \tau],
	\end{array}
	\right.
\end{equation*}
and consider a map $v \in V_{q}^{p}(E \cap (\Omega \times
(\tau_o-2\epsilon_o,\tau)))$ with $\partial_t v \in L^{q+1}(\Omega
\times (\tau_o-2\epsilon_o,\tau), \R^N)$.
Then, setting
\begin{align*}
	w_{\epsilon,\ell}
	:=
	(1-\xi_\epsilon) (u_\ast + \phi_\ell) + \xi_\epsilon v
	=
	u_\ast + (1-\xi_\epsilon) \phi_\ell + \xi_\epsilon (v-u_\ast),
\end{align*}
we find that $w_{\epsilon,\ell} \in V^p_q(E)$ satisfies $\partial_t w_{\epsilon,\ell} \in L^{q+1}(\Omega_\tau,\R^N)$ and $w_{\epsilon,\ell}(0) = u_\ast \in L^{q+1}(\Omega,\R^N)$.
Therefore, $w_{\epsilon,\ell}$ is an admissible comparison map in the
variational inequality satisfied by $u$, which holds for any $\tau \in
(0,T)$ provided that we choose suitable representatives of $u$ and
$v$, as a consequence of Lemma~\ref{lem:left-sided_continuity}.
For the term involving the time derivative, by the integration by parts formula in Theorem \ref{thm:ineq_integration_by_parts_formula}, which is applicable due to Remark~\ref{rem:int_by_parts_measure_dens}, and since $w_{\epsilon,\ell} = u_\ast$ in $(0,\epsilon)$ is independent of time, $\zeta_\epsilon \equiv 1$ in $[\epsilon,\tau_o - \epsilon]$, and $w_{\epsilon,\ell} = v$ in $(\tau_o - \epsilon,\tau)$, we conclude that
\begin{align*}
	&\iint_{\Omega_\tau} \partial_t w_{\epsilon,\ell} \cdot \big( \power{w_{\epsilon,\ell}}{q} - \power{u}{q} \big) \,\dx\dt
	\\ &=
	\iint_{\Omega_\tau} \partial_t w_{\epsilon,\ell} \cdot \zeta_\epsilon \big( \power{w_{\epsilon,\ell}}{q} - \power{u}{q} \big) \,\dx\dt
	+
	\iint_{\Omega \times (\tau_o - \epsilon,\tau)} (1-\zeta_\epsilon) \partial_t v \cdot \big( \power{v}{q} - \power{u}{q} \big) \,\dx\dt
	\\ &\leq
	\big\langle \partial_t \power{u}{q}, \zeta_\epsilon (w_{\epsilon,\ell} - u) \big\rangle
	- \iint_{\Omega_\tau} \zeta^{\prime}_\epsilon \b[u(t),w_{\epsilon,\ell}(t)] \,\dx\dt
	\\ &\phantom{=} +
	\iint_{\Omega \times (\tau_o-\epsilon,\tau)} (1-\zeta_\epsilon) \partial_t v \cdot \big( \power{v}{q} - \power{u}{q} \big) \,\dx\dt
	\\ &=
	\big\langle \partial_t \power{u}{q}, \zeta_\epsilon (w_{\epsilon,\ell} - u) \big\rangle
	- \bint_0^\epsilon \int_\Omega \b[u(t),u_\ast] \,\dx\dt
	+ \bint_{\tau_o - \epsilon}^{\tau_o} \int_\Omega \b[u(t),v(t)] \,\dx\dt
	\\ &\phantom{=} +
	\iint_{\Omega \times (\tau_o - \epsilon,\tau)} (1-\zeta_\epsilon) \partial_t v \cdot \big( \power{v}{q} - \power{u}{q} \big) \,\dx\dt.
\end{align*}
Therefore, altogether we infer
\begin{align}
	\iint_{\Omega_\tau} &f(x,u,Du) \,\dx\dt + \int_\Omega \b[u(\tau),v(\tau)] \,\dx
	\nonumber \\ &\leq
	\iint_{\Omega_\tau} f(x,w_{\epsilon,\ell},Dw_{\epsilon,\ell}) \,\dx\dt
	+ \iint_{\Omega_\tau} \partial_t w_{\epsilon,\ell} \cdot \big( \power{w_{\epsilon,\ell}}{q} - \power{u}{q} \big) \,\dx\dt
	\nonumber \\ &\phantom{=}
	+ \int_\Omega \b[u_o,u_\ast] \,\dx.
	\nonumber \\ &\leq
	\iint_{\Omega_\tau} f(x,w_{\epsilon,\ell},Dw_{\epsilon,\ell}) \,\dx\dt
	+ \iint_{\Omega \times (\tau_o-\epsilon,\tau)} (1-\zeta_\epsilon) \partial_t v \cdot \big( \power{v}{q} - \power{u}{q} \big) \,\dx\dt
	\label{eq:aux-localization} \\ &\phantom{=}
	+ \big\langle \partial_t \power{u}{q}, \zeta_\epsilon (w_{\epsilon,\ell} - u) \big\rangle
	+ \int_\Omega \b[u_o,u_\ast] \,\dx
	- \bint_0^\epsilon \int_\Omega \b[u(t),u_\ast] \,\dx\dt
	\nonumber \\ &\phantom{=}
	+ \bint_{\tau_o - \epsilon}^{\tau_o} \int_\Omega \b[u(t),v(t)] \,\dx\dt.
	\nonumber
\end{align}
By the dominated convergence theorem, for the first three terms on the right-hand side of \eqref{eq:aux-localization} we obtain that
\begin{align*}
	\lim_{\epsilon \downarrow 0}
	\bigg(
	\iint_{\Omega_\tau} &f(x,w_{\epsilon,\ell},Dw_{\epsilon,\ell}) \,\dx\dt
	+ \iint_{\Omega \times (\tau_o-\epsilon,\tau)} (1-\zeta_\epsilon) \partial_t v \cdot \big( \power{v}{q} - \power{u}{q} \big) \,\dx\dt
	\\ &\phantom{=}
	+\big\langle \partial_t \power{u}{q}, \zeta_\epsilon (w_{\epsilon,\ell} - u) \big\rangle
	\bigg)
	\\ &=
	\iint_{\Omega_{\tau_o}} f(x,\phi_\ell + u_\ast,D(\phi_\ell + u_\ast)) \,\dx\dt
	+ \iint_{\Omega \times (\tau_o,\tau)} f(x,v,Dv) \,\dx\dt
	\\ &\phantom{=}
	+ \iint_{\Omega \times (\tau_o,\tau)} \partial_t v \cdot \big( \power{v}{q} - \power{u}{q} \big) \,\dx\dt
	+\big\langle \partial_t \power{u}{q}, \chi_{\Omega_{\tau_o}} (\phi_\ell + u_\ast - u) \big\rangle.
\end{align*}
Next, since $u$ attains its initial values $u_o$ in the $L^{q+1}$-sense, see Remark \ref{rem:initial_condition_convergence}, we find that
\begin{align*}
	\lim_{\epsilon \downarrow 0}
	\bigg(
	\int_\Omega \b[u_o,u_\ast] \,\dx
	- \bint_0^\epsilon \int_\Omega \b[u(t),u_\ast] \,\dx\dt
	\bigg)
	= 0.
\end{align*}
Further, since $u$ is left-sided continuous in time by Lemma \ref{lem:left-sided_continuity}, we have that
\begin{align*}
	\lim_{\epsilon \downarrow 0}
	\bint_{\tau_o - \epsilon}^{\tau_o} \int_\Omega \b[u(t),v(t)] \,\dx\dt
	=
	\int_\Omega \b[u(\tau_o), v(\tau_o)] \,\dx
\end{align*}
for any $\tau_o \in (0,T)$.
Using the preceding computations in \eqref{eq:aux-localization} leads us to
\begin{align*}
	\iint_{\Omega_\tau} &f(x,u,Du) \,\dx\dt + \int_\Omega \b[u(\tau),v(\tau)] \,\dx
	\\ &\leq
	\iint_{\Omega_{\tau_o}} f(x,\phi_\ell + u_\ast,D(\phi_\ell + u_\ast)) \,\dx\dt
	+ \iint_{\Omega \times (\tau_o,\tau)} f(x,v,Dv) \,\dx\dt
	\\ &\phantom{=}
	+ \iint_{\Omega \times (\tau_o,\tau)} \partial_t v \cdot \big( \power{v}{q} - \power{u}{q} \big) \,\dx\dt
	+\big\langle \partial_t \power{u}{q}, \chi_{\Omega_{\tau_o}} (\phi_\ell + u_\ast - u) \big\rangle
	\\ &\phantom{=}
	+\int_\Omega \b[u(\tau_o), v(\tau_o)] \,\dx
\end{align*}
for any~$\tau_o \in (0,\tau)$.
Finally, since $\phi_\ell + u_\ast \to u$ in $V^p_q(E)$ and $f$ satisfies the local Lipschitz condition \eqref{ineq:Lipschitz_condition}, we conclude the claim of the lemma.
\end{proof}

\subsection{Right-sided continuity in time}
\label{sec:right-sided-continuity}
Using the localized variational inequality, we are now able to prove the continuity in time from the right-hand side. 
Combining Lemma \ref{lem:left-sided_continuity} with Lemma \ref{lem:right-sided_continuity} proves Theorem \ref{continuity_in_time_for_general_domains}.

\begin{lemma}\label{lem:right-sided_continuity}
Let $p \geq \frac{(n+1)(q+1)}{n+q+1}$, let the variational integrand $f\colon \Omega \times \mathds{R}^{N} \times \mathds{R}^{Nn} \to \R$ satisfy \eqref{eq:integrand}, and assume that the noncylindrical domain $E$ fulfills~\eqref{ineq:lower_measure_bound}, and \eqref{one sided growth condition} with $r$ given by \eqref{definition_of_r_for_one_sided_growth}.
Then any variational solution $u \in V_{q}^{p}(E)$ in the sense of Definition \ref{Definition:variational_solution} with $\partial_t \power{u}{q} \in (V^{p,0}(E))'$ satisfies
\begin{align}\label{right_continuity_limit}
	\lim_{\tau \downarrow \tau_o} \Vert u(\tau) - u(\tau_o) \Vert_{L^{q+1}(\Omega, \mathds{R}^{N})} = 0
\end{align}
for all $\tau_o \in (0,T)$.
\end{lemma}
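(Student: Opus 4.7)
The plan is to apply the localized variational inequality (\ref{ineq:localized_variational_inequality}) with a comparison map $v$ that approximates $u(\tau_o)$ at $\tau_o$ (so that the boundary term $\int_{E^{\tau_o}} \b[u(\tau_o),v(\tau_o)]\,\dx$ becomes small) and is close to $u$ on $(\tau_o,\tau)$. The natural candidate is a Landes-type time mollification of $u-u_\ast$ on $(\tau_o-\epsilon_o,\tau)$ with suitable initial value, namely $v_h := u_\ast + [u-u_\ast]_h$, which satisfies $\partial_t v_h = -\tfrac{1}{h}(v_h-u)$, so that $\partial_t v_h \cdot (\power{v_h}{q}-\power{u}{q}) \le 0$ by monotonicity of $\xi \mapsto \power{\xi}{q}$. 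Because $v_h$ does not automatically vanish on $\Omega \setminus E^t$ for $t \neq \tau_o$ (since the domain $E^t$ may differ from $E^{\tau_o}$), it must be damped by the spatial cutoff $\eta_{\sigma,h}$ from (\ref{backward_in_time_Steklov_average}) exactly as in the proof of Theorem \ref{thm:ineq_integration_by_parts_formula}, leading to the admissible test map $v_{h,\sigma} := u_\ast + \eta_{\sigma,h}^{q+1}(v_h-u_\ast) \in V^p_q(E\cap(\Omega\times(\tau_o-\epsilon_o,\tau)))$ with $\partial_t v_{h,\sigma}\in L^{q+1}$.

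Substituting $v_{h,\sigma}$ into (\ref{ineq:localized_variational_inequality}), splitting $\partial_t v_{h,\sigma}\cdot(\power{v_{h,\sigma}}{q}-\power{u}{q})$ into a nonpositive mollification part and an error part involving $\partial_t\eta_{\sigma,h}^{q+1}$, and discarding the good sign, one obtains a bound of the form
\[
\int_{E^\tau}\b[u(\tau),v_{h,\sigma}(\tau)]\,\dx
\le
\iint_{\Omega\times(\tau_o,\tau)} f(x,v_{h,\sigma},Dv_{h,\sigma})\,\dx\dt
+ \int_{E^{\tau_o}} \b[u(\tau_o),v_{h,\sigma}(\tau_o)]\,\dx
+ R_{h,\sigma,\tau},
\]
where $R_{h,\sigma,\tau}$ collects the error terms from $\partial_t\eta_{\sigma,h}^{q+1}$. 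Passing first to the limit $h\downarrow0$ (using Lemma \ref{lem:mollification:estimate_and_continuous_convergence} to identify limits of $v_h$) and then $\sigma\downarrow0$ (using Lemma \ref{weak_convergence_curoff_density_part}), the error $R_{h,\sigma,\tau}$ is controlled by the combination of Lemma \ref{lem:properties_for_eta_sigma_h}(iv), the $p$-Hardy inequality (Lemma \ref{lem:p-Hardy}), and the Gagliardo--Nirenberg inequality, reproducing verbatim the argument leading to (\ref{boundary-term-vanish}). The boundary term at $\tau_o$ vanishes because $v_{h,\sigma}(\tau_o)\to u(\tau_o)$ in $L^{q+1}$ as $h,\sigma\downarrow0$. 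The resulting bound reads
\[
\int_{E^\tau}\b[u(\tau),u(\tau_o)]\,\dx
\le
\iint_{\Omega\times(\tau_o,\tau)} f(x,u(\tau_o),Du(\tau_o))\,\dx\dt + o_{\tau\downarrow\tau_o}(1),
\]
and the right-hand side tends to $0$ as $\tau\downarrow\tau_o$ by absolute continuity of the Lebesgue integral. Finally, Lemma \ref{lem:technical_lemma_2}, H\"older's inequality, and Lemma \ref{lem:technical_lemma} combine to give
\[
\|u(\tau)-u(\tau_o)\|_{L^{q+1}(\Omega,\R^N)}^{2(q+1)}
\le
c(q)\|u\|_{L^\infty(0,T;L^{q+1})}^{q+1}\int_\Omega \b[u(\tau),u(\tau_o)]\,\dx,
\]
which yields (\ref{right_continuity_limit}).

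The main obstacle is the simultaneous management of the three limits $h\downarrow0$, $\sigma\downarrow0$ and $\tau\downarrow\tau_o$, especially the control of the time-derivative error $\partial_t\eta_{\sigma,h}^{q+1}(v_h-u_\ast)\cdot(\power{v_{h,\sigma}}{q}-\power{u}{q})$ and the verification that $v_{h,\sigma}(\tau_o)\to u(\tau_o)$. Both are handled precisely because condition (\ref{one sided growth condition}) yields the Sobolev-type estimate of Lemma \ref{lem:properties_for_eta_sigma_h}(iv), which is the key ingredient that distinguishes the present setting from merely \eqref{ineq:one_sided_growth_with_modulus} and explains the lower bound $p\geq\frac{(n+1)(q+1)}{n+q+1}$ required for the Hardy--Gagliardo--Nirenberg chain to close.
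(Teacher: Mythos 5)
There is a genuine gap, and it sits at the heart of your choice of comparison map. You take $v_{h,\sigma}$ to be (a cutoff damping of) the Landes mollification $v_h=u_\ast+[u-u_\ast]_h$ of $u$ itself, and you pass to the limit $h\downarrow0$ first. But then $v_{h,\sigma}(\tau)\to\eta_\sigma$-damped $u(\tau)$ (not $u(\tau_o)$), so the left-hand boundary term $\int_{E^\tau}\b[u(\tau),v_{h,\sigma}(\tau)]\,\dx$ degenerates to (essentially) $\int_{E^\tau}\b[u(\tau),u(\tau)]\,\dx=0$, and likewise the energy term becomes $\iint_{\Omega\times(\tau_o,\tau)}f(x,u,Du)\,\dx\dt$ rather than an expression in $u(\tau_o)$. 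In other words, with a comparison map that is close to $u$ on $(\tau_o,\tau)$ the localized inequality \eqref{ineq:localized_variational_inequality} becomes a triviality of the form $0\le o(1)$ and carries no information about $\Vert u(\tau)-u(\tau_o)\Vert_{L^{q+1}}$; your claimed ``resulting bound'' with $\b[u(\tau),u(\tau_o)]$ on the left does not follow from the stated substitution and limit order. A secondary problem is the asserted ``nonpositive mollification part'': after inserting the cutoff, $\eta_{\sigma,h}^{q+1}\partial_t v_h\cdot\big(\power{v_{h,\sigma}}{q}-\power{u}{q}\big)=-\tfrac{1}{h}\eta_{\sigma,h}^{q+1}(v_h-u)\cdot\big(\power{v_{h,\sigma}}{q}-\power{u}{q}\big)$ pairs $v_h-u$ with $\power{v_{h,\sigma}}{q}-\power{u}{q}$, and since $v_{h,\sigma}\neq v_h$ monotonicity of $\xi\mapsto\power{\xi}{q}$ no longer gives a sign.

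The point you are missing is that the comparison map must stay anchored at $u(\tau_o)$ on the whole interval $(\tau_o,\tau)$, so that the boundary term at time $\tau$ retains the quantity you want to estimate. The paper does this with a \emph{time-independent} map: it spatially mollifies $u(\tau_o)-u_\ast$ truncated to the inner parallel set $E^{\tau_o,2\epsilon}$, obtaining $u_{\tau_o}^{(\epsilon)}$ with $u_{\tau_o}^{(\epsilon)}-u_\ast\in C_0^\infty(E^{\tau_o,\epsilon},\R^N)$, and uses $w=u_\ast+\zeta(t)\big(u_{\tau_o}^{(\epsilon)}-u_\ast\big)$ with $\zeta\equiv1$ on $[\tau_o,\tau_o+\delta]$; admissibility near $\tau_o$ follows from the relative openness of $E$ (choose $\delta$ with $E^{\tau_o,\epsilon}\times(\tau_o-\delta,\tau_o+2\delta)\subset E$), no $\eta_{\sigma,h}$, Hardy, or Gagliardo--Nirenberg machinery is needed in this step. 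Then the $\partial_t v$ term vanishes on $(\tau_o,\tau)$, the energy term equals $(\tau-\tau_o)\int_\Omega f\big(x,u_{\tau_o}^{(\epsilon)},Du_{\tau_o}^{(\epsilon)}\big)\,\dx\to0$ as $\tau\downarrow\tau_o$ for fixed $\epsilon$, the boundary term at $\tau_o$ is $\int_\Omega\b\big[u_{\tau_o},u_{\tau_o}^{(\epsilon)}\big]\,\dx$, and the left-hand term $\int_\Omega\b\big[u(\tau),u_{\tau_o}^{(\epsilon)}\big]\,\dx$ is compared with $\int_\Omega\b[u(\tau),u_{\tau_o}]\,\dx$ by an error estimate using the $L^\infty(0,T;L^{q+1})$ bound, after which one sends $\tau\downarrow\tau_o$ and then $\epsilon\downarrow0$. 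Your final step (Lemma \ref{lem:technical_lemma_2}, H\"older, Lemma \ref{lem:technical_lemma}) is the same as the paper's, but the core of the argument needs to be redone along these lines.
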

\begin{proof}
Let $0 < \tau_o <T$ and $\epsilon>0$.
By the relative openness of $E$ there exists $\delta >0$ depending on $\epsilon$ such that
    \begin{align}\label{time_slice_set_inclusion}
        E^{\tau_o,\epsilon} \times (\tau_o - \delta, \tau_o+2\delta) \subset E.
    \end{align}
Further, we introduce a standard mollifier $\phi \in C_{0}^{\infty}(B_{1}, \mathds{R}_{\ge 0})$ and set $\phi_{\epsilon}:= \epsilon^{-n} \phi(\tfrac{x}{\epsilon})$.
For the sake of notation we write $u_{\tau_o}:= u(\tau_o)$.
We choose a representative of $u$
satisfying~\eqref{limit_of_representative}, which implies in
particular $u_{\tau_o} = u_\ast$ a.e.~in $\Omega \setminus E^{\tau_o}$.
Then, we define 
    \begin{align*}
        u_{\tau_o}^{(\epsilon)} := u_{\ast} + ((u_{\tau_o} - u_{\ast})\chi_{E^{\tau_o, 2 \epsilon}}) \ast \phi_{\epsilon}.
    \end{align*}
    Due to \eqref{time_slice_set_inclusion}, we obtain for all $t \in (\tau_o - \delta, \tau_o +2\delta)$ that $\spt \big( u_{\tau_o}^{(\epsilon)} - u_{\ast} \big) \subset E^t$. 
    Furthermore, $u_{\tau_o}^{(\epsilon)} - u_\ast$ is an element of the space $C_{0}^{\infty}(E^{\tau_o,\epsilon}, \mathds{R}^{N})$ and $u_{\tau_o}^{(\epsilon)}$ converges to $u(\tau_o)$ in the space $L^{q+1}(\Omega, \mathds{R}^{N})$ as $\epsilon \downarrow 0$.
Let $\zeta \in C_{0}^{0,1}((\tau_o - \delta, \tau_o +2\delta))$ be a
non-negative cut-off function with $\zeta \equiv 1$ on $[\tau_o, \tau_o
+ \delta]$. Hence, we are allowed to use
\begin{align*}
w(x,t):= u_\ast(x)+\zeta(t) \big(u_{\tau_o}^{(\epsilon)}(x)-u_\ast(x)\big)
\end{align*}
as a comparison map in the localized version \eqref{ineq:localized_variational_inequality} of the variational inequality. Therefore, by omitting the non-negative term on the left-hand side of the localized variational inequality, we deduce that
    \begin{align}\label{ineq:first_inequality_developed_in_initial_condition_proof}
        \int_{\Omega} \b \Big[u(\tau),u_{\tau_o}^{(\epsilon)} \Big] \,\dx 
	&\leq
	\iint_{\Omega \times (\tau_o, \tau)} f \Big(x,u_{\tau_o}^{(\epsilon)},Du_{\tau_o}^{(\epsilon)} \Big) \,\dx\dt
	+ \int_{\Omega} \b \Big[ u_{\tau_o},u_{\tau_o}^{(\epsilon)} \Big] \,\dx  \nonumber\\
    &=
    (\tau - \tau_o) \int_{\Omega} f \Big(x,u_{\tau_o}^{(\epsilon)},Du_{\tau_o}^{(\epsilon)} \Big) \,\dx
    + \int_{\Omega} \b \Big[u_{\tau_o},u_{\tau_o}^{(\epsilon)} \Big] \,\dx
    \end{align}
    for every~$\tau \in (\tau_o,\tau_o+\delta)$.
By the definition of $\b[\cdot, \cdot]$ we obtain the inequality
    \begin{align}\label{identity:inequality_for_b[]}
        \b[u(\tau),&u_{\tau_o}]
        \nonumber \\
        &\leq
        \b \Big[u(\tau),u_{\tau_o}^{(\epsilon)} \Big]
        + \tfrac{1}{q+1} \Big( |u_{\tau_o}|^{q+1} - \big|u_{\tau_o}^{(\epsilon)} \big|^{q+1} \Big)
        + |u(\tau)|^q \big| u_{\tau_o}^{(\epsilon)}  - u_{\tau_o} \big|.
    \end{align}
Now, combining \eqref{identity:inequality_for_b[]} and \eqref{ineq:first_inequality_developed_in_initial_condition_proof}, and applying H\"older's inequality gives us that
    \begin{align*}
        \int_{\Omega} &\b[u(\tau),u_{\tau_o}] \,\dx
        \\ &\leq
        \int_{\Omega} \b \Big[u(\tau),u_{\tau_o}^{(\epsilon)} \Big] \,\dx 
        + \tfrac{1}{q+1} \int_{\Omega} \Big( |u_{\tau_o}|^{q+1} - \big|u_{\tau_o}^{(\epsilon)} \big|^{q+1} \Big) \,\dx \\
        & \phantom{=}
        + \bigg( \int_{\Omega} |u(\tau)|^{q+1} \,\dx \bigg)^{\frac{q}{q+1}} 
        \bigg( \int_{\Omega} \big\vert u_{\tau_o}^{(\epsilon)} - u_{\tau_o} \big\vert^{q+1} \,\dx \bigg)^{\frac{1}{q+1}}
         \\&\le
        (\tau - \tau_o) \int_{\Omega} f \Big(x,u_{\tau_o}^{(\epsilon)},Du_{\tau_o}^{(\epsilon)} \Big) \,\dx
        + \int_{\Omega} \b \Big[u_{\tau_o},u_{\tau_o}^{(\epsilon)} \Big] \,\dx \\
        &\phantom{=}
        + \tfrac{1}{q+1} \int_{\Omega} \Big( |u_{\tau_o}|^{q+1} - \big|u_{\tau_o}^{(\epsilon)} \big|^{q+1} \Big) \,\dx \\
        &\phantom{=}
        +\Vert u \Vert_{L^{\infty}(0,T;L^{q+1}(\Omega, \mathds{R}^{N}))}^q
        \bigg( \int_{\Omega} \big\vert u_{\tau_o}^{(\epsilon)} - u_{\tau_o} \big\vert^{q+1} \,\dx \bigg)^{\frac{1}{q+1}}.
    \end{align*}
Letting $\tau \downarrow \tau_o$, we deduce that
    \begin{align*}
         \limsup_{\tau \downarrow \tau_o} &\int_{\Omega} \b[u(\tau) , u_{\tau_o} ] \,\dx \d\tau \\
        &\le
	  \int_{\Omega} \b\Big[u_{\tau_o} , u_{\tau_o}^{(\epsilon)}\Big] \,\dx 
        + \tfrac{1}{q+1} \int_{\Omega} \Big( |u_{\tau_o}|^{q+1} - \big|u_{\tau_o}^{(\epsilon)} \big|^{q+1} \Big) \,\dx \\
        &\phantom{=}
         +\Vert u \Vert_{L^{\infty}(0,T;L^{q+1}(\Omega, \mathds{R}^{N}))} ^q 
         \bigg( \int_{\Omega} \big\vert u_{\tau_o}^{(\epsilon)} - u_{\tau_o} \big\vert^{q+1} \,\dx \bigg)^{\frac{1}{q+1}}.
    \end{align*}
    Since $u_{\tau_o}^{(\epsilon)} \to u_{\tau_o}$ in $L^{q+1}(\Omega,\mathds{R}^{N})$ as $\epsilon \downarrow 0$, we obtain that 
    \begin{align}\label{limit_in_initial_condition_proof}
           \limsup_{\tau \downarrow \tau_o} \int_{\Omega}	\b[u(\tau) , u_{\tau_o}] \,\dx \d\tau =0.
     \end{align}
Combining Lemma~\ref{lem:technical_lemma_2} and Lemma~\ref{lem:technical_lemma} gives us the estimate
    \begin{align*}
        \vert u(\tau) - u_{\tau_o} \vert^{q+1}
        &\le 
        c(q) \Big(\vert u(\tau) \vert^{\frac{q+1}{2}} + \vert u_{\tau_o} \vert^{\frac{q+1}{2}} \Big)
        \Big\vert \power{u(\tau)}{\frac{q+1}{2}} - \power{u_{\tau_o}}{\frac{q+1}{2}} \Big\vert \nonumber \\
        &\le 
        c(q) \Big(\vert u(\tau) \vert^{\frac{q+1}{2}} + \vert u_{\tau_o} \vert^{\frac{q+1}{2}} \Big)
        \b[u(\tau), u_{\tau_o}]^{\frac{1}{2}}.
    \end{align*}
    Finally, integrating the preceding inequality over $\Omega$ and applying H\"older's inequality, we conclude that
        \begin{align*}
             \Vert u(\tau) &-u_{\tau_o} \Vert_{L^{q+1}(\Omega, \mathds{R}^{N})}^{q+1}  
             \\ &\le  
             c(q)  \bigg( \int_{\Omega}	\vert u(\tau) \vert^{q+1} + \vert u_{\tau_o} \vert^{q+1} \,\dx\bigg)^{\frac{1}{2}}
            \bigg( \int_{\Omega} \b[u(\tau), u_{\tau_o}] \,\dx \bigg)^{\frac{1}{2}} .
        \end{align*}
    The claim follows by taking the limit $\tau \downarrow \tau_o$ and using \eqref{limit_in_initial_condition_proof}.
\end{proof}

\bigskip
\textbf{Acknowledgements.}
This research was funded in whole or in part by the Austrian Science Fund (FWF) 10.55776/J4853 and 10.55776/P36295. Jarkko Siltakoski was supported by the Magnus Ehrnrooth and Emil Aaltonen foundations.
The second and third authors are grateful to the Department of
Mathematics at the Paris Lodron University of Salzburg, where this
research was initiated, for their
hospitality during their stay.
For the purpose of open access, the authors have applied a CC BY public copyright license to any Author Accepted Manuscript (AAM) version arising from this submission.

\end{document}